\documentclass[10pt]{amsart}
\usepackage[
paper=a4paper,
text={147mm,230mm},centering
]{geometry}

\iftrue
\makeatletter
\def\@settitle{%
  \vspace*{-20pt}
  \begin{flushleft}%
    \baselineskip14\p@\relax
    \normalfont\bfseries\LARGE
%    \uppercasenonmath\@title
    \@title
  \end{flushleft}%
}
\def\@setauthors{%
  \begingroup
  \def\thanks{\protect\thanks@warning}%
  \trivlist
  %\centering
  \large \@topsep30\p@\relax
  \advance\@topsep by -\baselineskip
  \item\relax
  \author@andify\authors
  \def\\{\protect\linebreak}%
%  \MakeUppercase{\authors}%
  \authors
  \ifx\@empty\contribs
  \else
    ,\penalty-3 \space \@setcontribs
    \@closetoccontribs
  \fi
  \normalfont
%  \@setaddresses
  \endtrivlist
  \endgroup
}
\def\@setabstracta{%
    \ifvoid\abstractbox
  \else
    \skip@25\p@ \advance\skip@-\lastskip
    \advance\skip@-\baselineskip \vskip\skip@
%    \hrule\vskip2pt
    \box\abstractbox
    \prevdepth\z@ % because \abstractbox is a vtop
%    \vskip2pt\hrule
    \vskip-10pt
  \fi
}
\renewenvironment{abstract}{%
  \ifx\maketitle\relax
    \ClassWarning{\@classname}{Abstract should precede
      \protect\maketitle\space in AMS document classes; reported}%
  \fi
  \global\setbox\abstractbox=\vtop \bgroup
    \normalfont\small
    \list{}{\labelwidth\z@
      \leftmargin0pc \rightmargin\leftmargin
      \listparindent\normalparindent \itemindent\z@
      \parsep\z@ \@plus\p@
      
    }%
    \item[\hskip\labelsep\bfseries\abstractname.]%
}{%
  \endlist\egroup
  \ifx\@setabstract\relax \@setabstracta \fi
}
% Subsection heading:
\def\section{\@startsection{section}{1}%
  \z@{-1.2\linespacing\@plus-.5\linespacing}{.8\linespacing}%
  {\normalfont\bfseries\large}}
\def\subsection{\@startsection{subsection}{2}%
  \z@{-.8\linespacing\@plus-.3\linespacing}{.3\linespacing\@plus.2\linespacing}%
  {\normalfont\bfseries}}
\def\subsubsection{\@startsection{subsubsection}{3}%
  \z@{.7\linespacing\@plus.1\linespacing}{-1.5ex}%
  {\normalfont\itshape}}
\def\@secnumfont{\bfseries}
\makeatother
\fi %\iftrue/false for amsart.cls hack

\usepackage{amssymb}
\usepackage{mathrsfs}
\usepackage[all]{xy}
\usepackage{graphicx,color,float}
\usepackage[bookmarks=false]{hyperref}
\usepackage{pinlabel}
\usepackage[textwidth=1.3in,color=yellow]{todonotes}
\usepackage{amsmath}
\usepackage{amsmath}
\usepackage{amsfonts}
\usepackage{graphicx}
\usepackage{amscd}
\usepackage{url}
\usepackage{caption}
\usepackage{subcaption}
\usepackage{color}
\usepackage{stmaryrd}

\theoremstyle{plain}
\newtheorem{theorem}{Theorem}[section]
\newtheorem{thmx}{Theorem}

\newtheorem{proposition}[theorem]{Proposition}
\newtheorem{lemma}[theorem]{Lemma}
\newtheorem{corollary}[theorem]{Corollary}

\newtheorem{hypothesis}[theorem]{Hypothesis}

\theoremstyle{definition}

\newtheorem{definition}[theorem]{Definition}
\newtheorem{example}[theorem]{Example}

\theoremstyle{remark}
\newtheorem{remark}[theorem]{Remark}

\newcommand{\C}{\mathbb{C}}

\newcommand{\N}{\mathbb{N}}
\newcommand{\Q}{\mathbb{Q}}
\newcommand{\R}{\mathbb{R}}

\newcommand{\Z}{\mathbb{Z}}

\def\pa{\partial}
\def\mcal{\mathcal}
\def\frak{\mathfrak}
\def\scr{\mathscr}
\def\ev{\textup{ev}}
\def\bev{\textbf{\textup{ev}}}

\numberwithin{equation}{section}

%----------------------------------------
\begin{document}

\title[Landau--Ginzburg
potentials with bulk in Gelfand--Cetlin systems]{A critical point analysis of Landau--Ginzburg
potentials with bulk in Gelfand--Cetlin systems}

\author{Yunhyung Cho}
\address{Department of Mathematics Education, Sungkyunkwan University, Seoul, Republic of Korea}
\email{yunhyung@skku.edu}

\author{Yoosik Kim}
\address{Department of Mathematics, Brandeis University, Waltham, USA and Center of Mathematical Sciences and Applications, Harvard University, Cambridge, USA}
\email{yoosik@brandeis.edu, yoosik@cmsa.fas.harvard.edu}

\author{Yong-Geun Oh}
\address{Center for Geometry and Physics, Institute for Basic Science (IBS),  Pohang, Republic of Korea, and Department of Mathematics, POSTECH, Pohang, Republic of Korea}
\email{yongoh1@postech.ac.kr}

\thanks{The first named author was supported by the National Research Foundation of Korea(NRF) grant funded by the Korea government(MSIP; Ministry of Science, ICT \& Future Planning) (NRF-2017R1C1B5018168).
The second named author is supported by the Simons Collaboration Grant on
Homological Mirror Symmetry and Applications. The third named author is supported by the IBS project IBS-R003-D1.}

\begin{abstract}
Using the bulk-deformation
of Floer cohomology by Schubert cycles and non-Archimedean analysis of Fukaya--Oh--Ohta--Ono's
bulk-deformed potential function, we prove that
every complete flag manifold $\mathrm{Fl}(n)$ ($n \geq 3$) with a monotone Kirillov--Kostant--Souriau
symplectic form carries a continuum of non-displaceable Lagrangian tori which degenerates
to a non-torus fiber in the Hausdorff limit. In particular, the Lagrangian $S^3$-fiber in
$\mathrm{Fl}(3)$ is non-displaceable, answering the question of which was raised by Nohara--Ueda
who computed its Floer cohomology to be vanishing.
\end{abstract}
\maketitle
\setcounter{tocdepth}{1}
\tableofcontents

%--------------------------------
\section{Introduction}\label{Sec_Intropart2}

This article is a sequel to Part I of \cite{CKO}. Here we focus on detecting non-displaceable Lagrangian
fibers of the Gelfand--Cetlin systems. We abbreviate `Gelfand--Cetlin' by writing
`GC' and so on.

Lagrangian Floer theory on GC systems was firstly studied by Nishinou, Nohara, and Ueda \cite{NNU}.
They proved that the Lagrangian GC torus fiber at the center of the GC polytope is non-displaceable. To show it, they calculated the potential function by constructing a toric degeneration from a GC system to a toric moment map and finding a weak bounding cochain such that the deformed Floer cohomology is non-vanishing.
The GC system admits non-torus Lagrangian GC fibers at the lower dimensional strata of the GC polytope, which make Floer theory of the system more interesting and challenging.
Using non-abelian symmetry or discrete symmetry, particular fibers of limited cases of Grassmannians have been investigated in \cite{EL, NU, ELgF}.

Motivated by those works, the present authors systematically studied topological/geometrical
type of GC fibers and provided a complete classification thereof in terms of the combinatorial ladder diagram in \cite{CKO}.
In this paper, we discuss non-displaceable GC fibers on complete flag manifolds equipped with monotone Kirillov--Kostant--Souriau symplectic forms (KKS forms).

Let $\lambda = ( \lambda_1 > \lambda_2 > \cdots > \lambda_n )$
be a decreasing sequence of real numbers so that the co-adjoint orbit $\mathcal{O}_\lambda$ is diffeomorphic to a complete flag manifold $\mathrm{Fl}(n)$.
When the KKS form $\omega_\lambda$ is \emph{monotone}, we shall show non-displaceability of certain torus fibers and non-torus fibers.
First, we describe putative positions of non-displaceable fibers.
By scaling $\omega_\lambda$ if necessary, we may assume that
\begin{equation}\label{givensequencemonotone}
	\lambda = ( \lambda_{i} := n - 2i + 1 \mid i = 1, \cdots, n ),
\end{equation}
which is the case where $[\omega_\lambda] = c_1(T\mathcal{O}_\lambda)$. In this case, the GC polytope $\Delta_\lambda$ is a reflexive polytope so that it admits a unique lattice point in its interior, see \cite[Corollary 2.2.4]{BCKV}.
Each Lagrangian face, a face containing Lagrangian fibers in its relative interior, admits a unique point satisfying certain Bohr--Sommerfeld condition, at which a monotone Lagrangian fiber is located \cite{CK:mono}.
A candidate is a line segment connecting the center of the polytope and the position of a monotone Lagrangian fiber (in the case of monotone complete flag manifolds).

To verify a non-torus Lagrangian fiber is non-displaceable, a family of Lagrangian tori whose Hausdorff limit is the non-torus fiber will be taken into account.
Once we show that the tori are non-displaceable, we then obtain non-displaceability of the non-torus Lagrangian fiber.

We start from the simplest case where the co-adjoint orbit $\mcal{O}_\lambda$ of a sequence $\lambda = ( \lambda_ 1 > \lambda_2 > \lambda_3 )$. In this case, Pabiniak investigated displaceable GC fibers.

\begin{theorem}[\cite{Pa}]\label{displaceablefl3}
For $\lambda = ( \lambda_ 1 > \lambda_2 > \lambda_3 )$, let $(\mcal{O}_\lambda, \omega_\lambda)$ be
as above.
\begin{enumerate}
\item If $\omega_\lambda$ is not monotone, all the fibers but one over the center are displaceable.
\item If $\omega_\lambda$ is monotone, all the fibers but those over the line segment
\begin{equation}\label{equation_linesegmentforf3}
	I := \left\{ (u_{1,1}, u_{1,2}, u_{2,1}) = (0, a -t, -a +t) \in \R^3 ~\colon~ 0 \leq t \leq a \right\}
\end{equation}
are displaceable where $2a = \lambda_1 - \lambda_2$. Observe the line segment $I$ is the red line in Figure~\ref{figure_GCmomnetf3}.
\end{enumerate}
\end{theorem}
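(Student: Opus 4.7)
I plan to adapt McDuff's probe method to the Gelfand--Cetlin setting via the Nishinou--Nohara--Ueda toric degeneration of $(\mcal{O}_\lambda, \omega_\lambda)$. Recall that a probe in a moment polytope is an affine segment entering the relative interior of a facet in a primitive integer direction transverse to that facet, and that any point lying in the open first half of the probe is displaceable by the Hamiltonian circle action determined by the probe direction. For $\mathrm{Fl}(3)$ the GC polytope $\Delta_\lambda$ is three-dimensional, cut out by the six GC inequalities, and fails to be Delzant exactly along the two $2$-faces $\{u_{1,1} = u_{2,1}\}$ and $\{u_{2,1} = u_{1,2}\}$ over which the non-torus GC fibers sit. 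Away from these non-Delzant strata, the GC coordinates $u_{1,1}, u_{1,2}, u_{2,1}$ generate commuting Hamiltonian $S^1$-actions on the smooth locus, so any probe whose length avoids the non-Delzant strata (except possibly at its entering facet) lifts to a displacing Hamiltonian isotopy on the corresponding $T^3$-fiber.

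With this tool in hand I would enumerate the admissible probes. For generic non-monotone $\lambda$, every interior point of $\Delta_\lambda$ other than a single distinguished point admits a coordinate-direction probe containing it in its open first half; the argument is a direct case analysis on which GC inequality is closest to saturation at the point, producing a probe in the coordinate direction normal to the corresponding smooth facet. The exceptional point is the unique candidate for the Bohr--Sommerfeld monotone Lagrangian of the family, and no probe passes through it in its first half because it sits equidistantly from two opposite facets. In the monotone case $\Delta_\lambda$ acquires a $\Z/2$ involution whose fixed set is exactly the line segment $I$ of \eqref{equation_linesegmentforf3}; for any $u \in I$, every probe entering through a facet $F$ has the same distance to its exit as the reflected probe entering through the symmetric facet, which prevents any first-half probe through $u$. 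Off $I$ the symmetry is broken and explicit first-half probes can be constructed along coordinate directions.

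The main obstacle is the systematic verification that $\Delta_\lambda \setminus I$ is actually covered by open first halves of admissible probes. One must subdivide $\Delta_\lambda$ according to which GC inequalities are sharpest at each point, and in each sub-region exhibit a probe that (i) avoids the non-Delzant $2$-faces except possibly at its entering facet and (ii) contains the target fiber in its open first half. Care is required near the non-Delzant strata, where one cannot use probes whose entire length meets those $2$-faces. This combinatorial bookkeeping is exactly what Pabiniak's paper carries out, and we invoke his result as Theorem \ref{displaceablefl3}.
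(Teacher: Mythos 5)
This theorem is attributed to Pabiniak and is cited, not proved, in the paper; so there is no ``paper's proof'' to compare against, and I evaluate your sketch on its own terms. Your identification of the probe method as the underlying tool is correct, and you rightly observe that the non-Delzant strata must be avoided by admissible probes. (One small stylistic point: Pabiniak works directly with the Gelfand--Cetlin coordinates on the co-adjoint orbit, since the $u_{i,j}$ already generate Hamiltonian circle actions on the dense open locus of torus fibers; the Nishinou--Nohara--Ueda toric degeneration is not needed to run the probe argument.)

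There is, however, a genuine error in the step where you characterize $I$. You assert that, in the monotone case, $\Delta_\lambda$ has a $\Z/2$ involution whose fixed set is exactly the line segment $I$, and that this symmetry prevents first-half probes. Both halves of this claim fail. With $\lambda=(2a,0,-2a)$, the transpose involution $(u_{1,1},u_{1,2},u_{2,1})\mapsto(-u_{1,1},-u_{2,1},-u_{1,2})$ has fixed locus $\{(0,s,-s):0\le s\le 2a\}$ inside $\Delta_\lambda$, which runs from the non-Delzant vertex $(0,0,0)$ through the center $(0,a,-a)$ all the way to the vertex $(0,2a,-2a)$. The segment $I$ is only the half with $0\le s\le a$. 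Points on the other half (say $u=(0,s,-s)$ with $a<s<2a$) lie on the fixed locus but are displaceable: the coordinate probe in direction $(0,-1,0)$ entering through the interior of the facet $\{u_{1,2}=\lambda_1\}$ at $(0,2a,-s)$ has total length $2a$ and meets $u$ at parameter distance $2a-s<a$, which is strictly in the first half. Thus symmetry alone neither isolates $I$ nor obstructs first-half probes. Moreover the mechanism you invoke --- that a probe and its reflection ``have the same distance to exit'' --- does not prevent a first-half probe; it only says the reflected probe places $u$ symmetrically, so that when one fails the other fails too. What actually singles out $I$ is its proximity to the non-simple vertex $(0,0,0)$ where four facets meet: for $0\le s\le a$ the distances from $(0,s,-s)$ to the four ``near'' facets (those through $(0,0,0)$) are all $s$, and to the two ``far'' facets are $2a-s$, so every admissible probe entering through a far facet places $u$ at parameter $2a-s\ge a$, i.e.\ at or past the midpoint, while probes from the near facets have no admissible entry on the axis because distinct facets of the non-simple vertex coincide there. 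This case analysis on saturated inequalities near the singular vertex is what Pabiniak's paper carries out, and it is essential; the symmetry heuristic as stated would predict twice the correct segment.
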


\begin{figure}[h]
	\scalebox{1.7}{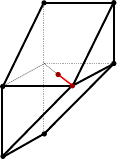}
	\caption{\label{figure_GCmomnetf3} The positions of non-displaceable GC Lagrangian fibers in $\mathrm{Fl}(3)$.}	
\end{figure}
\vspace{-0.2cm}

Note that the line segment $I$ connects the center $(0, a, -a)$ of the GC polytope $\Delta_\lambda$ and the position $(0, 0, 0)$ of the Lagrangian $3$-sphere.

The following theorem asserts every GC fiber in the family $\{\Phi_\lambda^{-1}(\mathbf{u}) \mid \mathbf{u} \in I\}$ is indeed non-displaceable.
Thus, together with Theorem~\ref{displaceablefl3}, our result provides the complete classification of displaceable and non-displaceable Lagrangian GC fibers when $\omega_\lambda$ is monotone.

\begin{thmx}[Theorem \ref{theorem_maincompleteflag3}]\label{theoremC}
Let $\lambda = ( \lambda_1 = 2 > \lambda_2 = 0 > \lambda_3 = -2 )$ and
consider the co-adjoint orbit $(\mcal{O}_\lambda, \omega_\lambda)$.
 Then the fiber over a point $\mathbf{u} \in \Delta_\lambda$ is non-displaceable if and only if $\mathbf{u} \in I$ where
\begin{equation}\label{linesegmentforf3}
I := \left\{ (u_{1,1}, u_{1,2}, u_{2,1}) = (0, 1 -t, -1 +t) \in \R^3 ~|~ 0 \leq t \leq 1 \right\}
\end{equation}
In particular, the Lagrangian 3-sphere $\Phi_\lambda^{-1}(0,0,0)$ is non-displaceable.
\end{thmx}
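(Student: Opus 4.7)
The strategy splits according to the topological type of the fiber over $\mathbf{u} \in I$. For $\mathbf{u}$ in the relative interior of $I$ the fiber is a Lagrangian torus $T^{3}$, while at the endpoint $\mathbf{u}=(0,0,0)$ the fiber is the Lagrangian $3$-sphere. The plan is to exhibit non-vanishing bulk-deformed Floer cohomology for each interior torus fiber and then deduce non-displaceability of the $3$-sphere by a Hausdorff limit argument, since by construction $\Phi_\lambda^{-1}(0,0,0)$ is the Hausdorff limit of $\Phi_\lambda^{-1}(\mathbf{u}_t)$ as $t\to 1$ along $I$.

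The first technical step is to set up a bulk class $\mathfrak{b}$ as a Novikov-linear combination of Schubert cycles on $\mathrm{Fl}(3)$, with coefficients to be tuned as functions of $\mathbf{u}\in I$. To compute the resulting bulk-deformed potential $\mathfrak{PO}^{\mathfrak{b}}_{L_{\mathbf{u}}}(y_1,y_2,y_3)$ I would follow the Nishinou--Nohara--Ueda toric degeneration of $\mathrm{Fl}(3)$ to a Gelfand--Cetlin toric variety: transporting Schubert cycles to toric divisors, the GC torus disc count can be matched with the toric one (up to bubble contributions coming from the singular locus). The upshot is a Laurent expression in $y_1,y_2,y_3$ whose monomials are weighted by $T$ raised to the symplectic areas of the corresponding Maslov-$2$ disc classes, and whose coefficients depend on $\mathfrak{b}$ and on the GC coordinate $\mathbf{u}$.

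The second step is the non-Archimedean critical point analysis. I would choose the Schubert-cycle coefficients of $\mathfrak{b}$ (depending on $\mathbf{u}$) so that the system $\partial \mathfrak{PO}^{\mathfrak{b}}_{L_\mathbf{u}}/\partial y_i=0$ admits a leading-order solution $\bar y\in (\C^\times)^3$ for every $\mathbf{u}\in I^{\circ}$. A standard Novikov implicit function / Hensel-type argument then lifts $\bar y$ to a genuine critical point $y\in (\Lambda^\ast)^3$, which by the FOOO machinery yields
\begin{equation*}
HF\bigl((L_\mathbf{u},\mathfrak{b}^{L_\mathbf{u}}),(L_\mathbf{u},\mathfrak{b}^{L_\mathbf{u}});\mathfrak{b}\bigr)\neq 0,
\end{equation*}
and hence non-displaceability of $L_\mathbf{u}$. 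Finally, non-displaceability of the Lagrangian $3$-sphere follows from the Hausdorff limit principle: any Hamiltonian diffeomorphism displacing $\Phi_\lambda^{-1}(0,0,0)$ would, by openness of the displacement condition, also displace the nearby $L_{\mathbf{u}_t}$ for $t$ close to $1$, contradicting the previous step.

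The main obstacle I expect is pinpointing the Schubert-cycle bulk class that produces a critical point uniformly across the entire segment $I$. The subtlety concentrates near the endpoint $(0,0,0)$, where the GC torus fiber collapses onto $S^{3}$ and the symplectic areas of several Maslov-$2$ disc classes degenerate to zero. Controlling the leading-order behaviour of $\mathfrak{PO}^{\mathfrak{b}}_{L_\mathbf{u}}$ as $\mathbf{u}\to (0,0,0)$, and showing that one can keep the critical point inside $(\Lambda^\ast)^3$ throughout this degeneration, is the heart of the argument and is precisely what makes the Hausdorff limit step applicable to recover the Nohara--Ueda $3$-sphere, whose ordinary Floer cohomology vanishes.
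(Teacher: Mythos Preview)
Your overall architecture matches the paper's: bulk-deform by Schubert cycles, compute $W^{\frak b}$ via the Nishinou--Nohara--Ueda toric degeneration, find a critical point in $(\Lambda_U)^3$ for each $t\in[0,1)$, and then obtain the $3$-sphere by the Hausdorff-limit/closedness argument. The limit step is exactly Proposition~\ref{closednessofnondisp}, and the ``only if'' direction is provided by Pabiniak's Theorem~\ref{displaceablefl3}, which you should cite explicitly.

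There is, however, a real gap in your second step. You propose to fix $\frak b$ first and then Hensel-lift a leading-order solution $\bar y\in(\C^\times)^3$. For $L(t)$ the leading part of $W^{\frak b}$ (the $T^{1-t}$ terms) is
\[
c^{\textup{ver}}_{2,1}\Bigl(\tfrac{y_{1,2}}{y_{1,1}}+\tfrac{1}{y_{2,1}}\Bigr)+c^{\textup{hor}}_{1,2}\Bigl(\tfrac{y_{1,1}}{y_{2,1}}+y_{1,2}\Bigr),
\]
and its critical locus is a one-parameter family: the $y_{1,2}$- and $y_{2,1}$-equations both reduce to $y_{1,1}=-c^{\textup{ver}}_{2,1}/c^{\textup{hor}}_{1,2}$, leaving $y_{1,2}y_{2,1}$ free. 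The Jacobian is therefore degenerate for \emph{every} choice of leading-order bulk, and a standard Hensel/implicit-function lift does not apply.

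The paper's device is the opposite of yours: rather than lifting $\bar y$ for a fixed $\frak b$, it fixes the $y$'s and solves for $\frak b$. Concretely, one sets $y_{1,2}=y_{2,1}=1$, takes a \emph{tentative} bulk with $\exp(\frak b^{\textup{ver}}_{2,1})=1+T^{2t}$, and chooses $y_{1,1}\in\Lambda_U$ solving $(y_{1,1})^2=1+T^{2t}$ with $y_{1,1}\equiv -1$; this makes $\partial_{y_{1,1}}W^{\frak b}=0$ exactly. The remaining two derivatives then take the form $(\textup{const}\pm\exp(\frak b^{\textup{ver}}_{3,2}))T^{1+t}+\cdots$ and $(\textup{const}\pm\exp(\frak b^{\textup{hor}}_{2,3}))T^{1+t}+\cdots$, and one simply \emph{chooses} $\frak b^{\textup{ver}}_{3,2},\frak b^{\textup{hor}}_{2,3}\in\Lambda_0$ to annihilate them. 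The point is that $\scr{D}^{\textup{ver}}_{3,2}$ and $\scr{D}^{\textup{hor}}_{2,3}$ each meet only one Maslov-$2$ disc class, so these two bulk parameters control the two residual equations independently without disturbing the first. This ``postpone the bulk and solve for it last'' trick is what circumvents the degeneracy; you should replace the Hensel step by it.
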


\begin{remark}
Nohara and Ueda \cite{NU} calculated a Floer cohomology of the Lagrangian 3-sphere $\Phi^{-1}_\lambda(0,0,0)$, which turns out to be zero over the Novikov field $\Lambda$ and hence
non-displaceabilty of the fiber remained open. Theorem~\ref{theoremC} resolves
the question showing that it is non-displaceable.
\end{remark}

Next, we deal with a general case for an arbitrary positive integer $n \geq 4$ where $\lambda$ is given as in~\eqref{givensequencemonotone}. In this case, the GC polytope $\Delta_\lambda$ is a reflexive polytope whose center is
\[
\left( u _{i, j} := j - i \mid {i + j \leq n} \right) \in \Delta_\lambda \subset \R^{n(n-1)/2}.
\]
Consider the face $f_m$ of $\Delta_\lambda$ defined by
\[
\{ u_{i,j} = u_{i, j+1} \mid  1 \leq i \leq m, 1 \leq j \leq m-1\} \cup \{ u_{i+1,j} = u_{i, j} \mid  1 \leq i \leq m-1, 1 \leq j \leq m\}
\]
for any integer $m$ satisfying $2 \leq m \leq \left\lfloor \frac{n}{2} \right\rfloor$.
Note that there are $\left( \left\lfloor \frac{n}{2} \right\rfloor - 1 \right)$ such faces in $\Delta_\lambda$.

The candidates for non-displaceable Lagrangian fibers are the fibers over the line segment $I_m \subset \Delta_\lambda$
connecting the center of $\Delta_\lambda$ and the center
of $f_m$ for each $m \geq 2$.
Explicitly,
the line segment $I_m$ is parameterized by $\{I_m(t) \in \Delta_\lambda \mid 0 \leq t \leq 1\}$ where
\begin{equation}\label{IMT}
I_m (t) :=
\begin{cases}
u_{i,j}(t) = (j - i) - (j - i) \, t \quad &\mbox{if \,} \max (i,j) \leq m \\
u_{i,j}(t) = (j - i) \quad &\mbox{if \,} \max (i,j) > m.
\end{cases}
\end{equation}
We denote by $L_m(t)$ the Lagrangian GC fiber over the point $I_m(t)$, that is $L_m(t) := \Phi_\lambda^{-1}(I_m(t))$.
The following is the main theorem of the present paper.

\begin{thmx}[Theorem ~\ref{theorem_completeflagmancotinuum}]\label{theoremD}
Let $\lambda = ( \lambda_{i} := n - 2i + 1 \,|\, i = 1, \cdots, n )$ be an $n$-tuple of real numbers for an arbitrary integer $n \geq 4$. Consider the co-adjoint orbit $(\mathcal{O}_\lambda,\omega_\lambda)$.
Then each Gelfand--Cetlin fiber $L_m(t)$ is non-displaceable Lagrangian for every $2 \leq m \leq \left\lfloor \frac{n}{2} \right\rfloor$.
\end{thmx}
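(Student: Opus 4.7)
The plan is to establish non-displaceability of the torus fibers $L_m(t)$ for $0 \le t < 1$ by constructing a bulk-deformed Lagrangian Floer cohomology in the Fukaya--Oh--Ohta--Ono framework that is non-vanishing at each $L_m(t)$, and then to deduce non-displaceability of the non-torus limit fiber $L_m(1)$ by a Hausdorff approximation argument. For $t \in [0,1)$ the point $I_m(t)$ lies in the interior of $\Delta_\lambda$, so $L_m(t)$ is a Lagrangian torus sitting in the open dense GC torus orbit; only at $t = 1$ do we hit the non-torus face $f_m$.

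\textbf{Step 1 (bulk deformation by Schubert cycles).} Via the toric degeneration of $\mathrm{Fl}(n)$ to the GC toric variety constructed in \cite{NNU}, one identifies the Maslov index $2$ holomorphic disks with boundary on $L_m(t)$ with the toric disks at the corresponding torus fiber up to a controlled correction. Take a bulk class $\mathfrak{b} = \sum_\alpha c_\alpha [Z_\alpha]$ where $\{Z_\alpha\}$ is the collection of Schubert cycles of complex codimension one whose toric degenerations are the torus-invariant divisors corresponding to the facets of $\Delta_\lambda$ adjacent to $f_m$, with coefficients $c_\alpha$ in the positive-valuation part of the Novikov ring $\Lambda$. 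The FOOO machinery then produces a bulk-deformed potential $\mathfrak{PO}^{\mathfrak{b}}_{L_m(t)}(y)$ on the Novikov torus of weak bounding cochains, and existence of a critical point of $\mathfrak{PO}^{\mathfrak{b}}_{L_m(t)}$ implies $HF^{(\mathfrak{b},b)}(L_m(t),L_m(t);\Lambda) \neq 0$, hence non-displaceability of $L_m(t)$.

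\textbf{Step 2 (critical point via non-Archimedean analysis).} I would tune the coefficients $c_\alpha$ so that the tropicalization of the critical-point equations $\partial \mathfrak{PO}^{\mathfrak{b}}_{L_m(t)} / \partial y_i = 0$ becomes balanced at a valuation profile matching the position $I_m(t)$ in the GC polytope. Because $\Delta_\lambda$ is reflexive and the face $f_m$ carries the combinatorial symmetry imposed by the monotone normalization \eqref{givensequencemonotone}, the required tropical balancing can be achieved simultaneously for every $t \in [0,1)$ along the segment. A non-Archimedean implicit function theorem of Newton--Kantorovich type then lifts each leading tropical solution to an honest critical point of the full bulk-deformed potential over $\Lambda$, yielding non-displaceability of $L_m(t)$ for every $t \in [0,1)$. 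Step 3 (Hausdorff limit): as $t \to 1$ the tori $L_m(t)$ Hausdorff-converge to the non-torus fiber $L_m(1)$ over the center of $f_m$; since any Hamiltonian diffeomorphism displacing $L_m(1)$ would, by compactness and continuity, also displace $L_m(t)$ for $t$ sufficiently close to $1$, Step 2 forces $L_m(1)$ to be non-displaceable as well.

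\textbf{Main obstacle.} The principal difficulty is to control all the disk contributions to the bulk-deformed potential beyond the leading toric terms and to prove that none of the corrections destroys the critical point found at leading order. Concretely, one must verify that the error terms arising from both the NNU toric degeneration and from higher-energy disk classes have strictly larger valuation than the leading tropical expressions, uniformly in $t$ as $t \to 1^-$, since the torus coordinates degenerate in that limit and the naive estimates deteriorate. This uniform estimate is what enables the Newton--Kantorovich step to succeed along the entire segment $I_m$ up to its endpoint, and it is here that the combinatorics of the GC ladder diagram (classified in \cite{CKO}) together with the precise adjacency pattern of facets to $f_m$ enter in an essential way.
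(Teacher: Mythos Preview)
Your overall architecture---bulk-deform by degree-two Schubert cycles, find a critical point of $W^{\frak b}$ for each $t\in[0,1)$, then pass to the Hausdorff limit for $t=1$---matches the paper exactly. However, several of your technical claims are off in ways that matter, and the proposal misses where the real work lies.

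First, you take bulk coefficients in $\Lambda_+$. This is too restrictive: with $\frak b_\alpha\in\Lambda_+$ one has $\exp(\frak b_\alpha)\equiv 1\bmod T^{>0}$, so the leading (complex) coefficients of the potential cannot be moved. The paper needs $\frak b_\alpha\in\Lambda_0$, so that the unit constants $c_\alpha=\exp(\frak b_\alpha)\in\Lambda_U$ can have \emph{arbitrary} nonzero leading complex part; this freedom is exactly what is used to solve the critical-point equation at leading order. Second, you describe the cycles as ``Schubert cycles whose toric degenerations are the torus-invariant divisors corresponding to the facets adjacent to $f_m$.'' In the GC system the inverse image of a \emph{single} facet is generally not a cycle (non-torus fibers on lower strata create boundary); one must use Kogan's horizontal/vertical unions $\scr D^{\textup{hor}}_{i,i+1},\,\scr D^{\textup{ver}}_{j+1,j}$. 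Consequently, unlike the toric case, the monomials of $W$ cannot be independently rescaled by bulk---several facet terms share the same coefficient $c_\alpha$. This coupling is the central obstruction your outline does not address.

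Third, your Step~2 locates the difficulty in a Newton--Kantorovich lift and in uniformity as $t\to 1^-$. Neither is the issue. Uniformity is irrelevant: the paper fixes $t<1$, chooses a (possibly $t$-dependent) $\frak b$, and the limit $t=1$ is handled by the elementary Proposition~\ref{closednessofnondisp}. The genuinely hard step is solving the \emph{leading} complex system---the paper's ``split leading term equation''---which is not generic in the Bernstein--Kushnirenko sense because of the coupling just described. The paper handles this by an explicit inductive construction (``seeds'' on the ladder diagram) producing a nonzero complex solution together with admissible $c_\alpha$'s; once that exists, promoting to a $\Lambda_U$-valued critical point is a straightforward valuation induction (Lemma~\ref{extensionlemma}), not an implicit-function argument. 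Your proposal would need to replace the vague tropical-balancing/Newton--Kantorovich sentence with an actual argument for solvability of this coupled leading-order system.
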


The main ingredient of the proof of the theorem is a careful non-Archimedean
(or $T$-adic) analysis of
 the critical point equation of the bulk-deformed potential in the spirit of \cite{FOOOToric2,FOOOS2S2,KLS} generalizing the analysis from the toric to the Gelfand--Cetlin system.

\begin{example}
A monotone complete flag manifold $\mathrm{Fl}(7)$ admits (at least) two line segments $I_2$ and $I_3$ in the GC polytope over which the fibers are non-displaceable as depicted in Figure~\ref{figureF7}. Particularly, it has non-displaceable Lagrangian fibers diffeomorphic to $\mathrm{U}(2) \times T^{17}$ and $\mathrm{U}(3) \times T^{12}$.
\begin{figure}[ht]
	\scalebox{1}{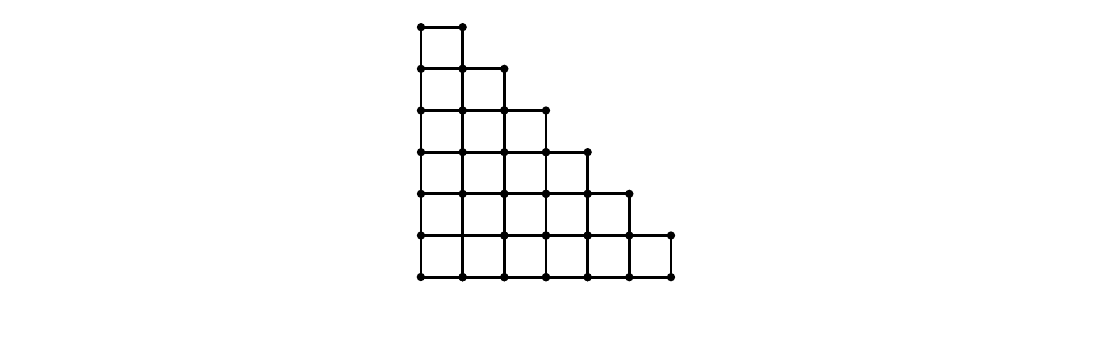}
	\caption{\label{figureF7} Positions of non-displaceable Lagrangian GC fibers in $\mathrm{Fl}(7)$.}	
\end{figure}
\end{example}

\begin{remark}
	The third named author with Fukaya, Ohta, and Ono \cite{FOOOToric2} found a continuum of non-displaceable torus fibers on some compact toric manifolds including a non-monotone toric blowup of $\C P^2$ at two points, see also Woodward \cite{Wo}. Using the degeneration models, they also produced a continuum of non-displaceable Lagrangian tori on $\C P^1 \times \C P^1$ and the cubic surface respectively in \cite{FOOOS2S2} and \cite{FOOOsp}. Vianna \cite{Vi} also showed a continuum of non-displaceable tori in $(\C P^1)^{2n}$.
Sun \cite{Sun} found non-displaceable Lagrangian tori near a chain of Lagrangian two-spheres in a closed symplectic four manifold and del Pezzo surfaces.
\end{remark}

%------------------------------------------------------------------------------------------
\section{Lagrangian Floer theory on Gelfand--Cetlin systems}\label{secReviewOfLagrangianFloerTheory}

In this section, after briefly recalling Lagrangian Floer theory and its deformation developed by the third named author with Fukaya, Ohta, and Ono in a general context, we review the work of Nishinou, Nohara, and Ueda about the calculation of the potential function of a GC system. Then, using the combinatorial description of Schubert cycles in complete flag manifolds by Kogan, we will compute the potential function deformed by a combination of Schubert cycles of codimension two as a Laurent series. Finally, combining those ingredients, we give the proof of Theorem~\ref{theoremC}.

%------------------------------------------------------------------------------------------
\subsection{Potential functions of Gelfand--Cetlin systems}\label{reviewpotentialpotentialfunc}

We begin by setting up the notations.
\begin{itemize}
\item $\Lambda := \left\{ \sum_{i=1}^{\infty} a_{i} T^{\lambda_i}  \mid a_i \in \C, \lambda_i \in \R, \lim_{i \rightarrow \infty} \lambda_i = \infty \right\}$
\item $\frak{v}_T \colon \Lambda \backslash \{ 0 \} \to \R, \quad \frak{v}_T \left(\sum_{j=1}^{\infty} a_{j} T^{\lambda_j}  \right) := \inf_{j} \, \{ \lambda_j \mid a_j \neq 0 \}$
\item $\Lambda_0 := \frak{v}_T^{-1}[0, \infty) \cup \{ 0 \} = \left\{ \sum_{i=1}^{\infty} a_{i} T^{\lambda_i} \in \Lambda  \mid \lambda_i \geq 0 \right\}$
\item $\Lambda_+ := \frak{v}_T^{-1}(0, \infty) \cup \{ 0 \} = \left\{ \sum_{i=1}^{\infty} a_{i} T^{\lambda_i} \in \Lambda  \mid \lambda_i > 0 \right\}$
\item $\Lambda_U := \Lambda_0 \backslash \Lambda_+ = \left\{ \sum_{i=1}^{\infty} a_{i} T^{\lambda_i} \in \Lambda_0  \mid \frak{v}_T \left( \sum_{i=1}^{\infty} a_{i} T^{\lambda_i} \right) = 0 \right\}.$
\end{itemize}

For a relatively spin closed Lagrangian submanifold $L$ in a closed symplectic manifold $(X, \omega)$, by the work of Fukaya \cite{Fuk}, one can associate an $A_\infty$-algebra  $\{ \frak{m}_k \}_ {k \geq 0}$ on the $\Lambda_0$-valued de Rham complex of $L$.
Following the procedure in \cite{FOOOc}, the constructed $A_\infty$-algebra can be converted into the canonical model on $H^\bullet (L; \Lambda_0)$.

A solution $b \in H^1(L; \Lambda_+)$ of the (weak) Maurer--Cartan equation
$$
\sum_{k=0}^\infty \frak{m}_k (b^{\otimes k}) \equiv 0 \mod \textup{PD}[L]
$$
is called a \emph{(weak) bounding cochain}. The value of the \emph{potential function} $W$ at a bounding cochain $b$ is assigned to be the multiple of the Poincar\'{e} dual $\textup{PD}[L]$ of $L$.
Namely, we solve
$$
\sum_{k=0}^\infty \frak{m}_k (b^{\otimes k}) = W(b) \cdot \textup{PD}[L],\,  \mbox{ with $W(b) \in \Lambda_0$}.
$$
Since $\textup{PD}[L]$ is the strict unit of the $A_\infty$-algebra (on the de Rham model), the deformed map
$\frak m^b_1$ defined by
$$
\frak{m}^{b}_1 (h) := \sum_{l,k} \frak{m}_{l+k+1}(b^{\otimes l}, h, b^{\otimes k})
$$
becomes a differential and thus its cohomology (deformed by $b$) over $\Lambda_0$ is defined.
Let
\begin{itemize}
\item ${HF} ((L, b); \Lambda_0) := \textup{Ker} (\frak{m}^b_1) \, / \, \textup{Im} (\frak{m}^b_1)$
\item ${HF} ((L, b); \Lambda) := {HF} ((L, b); \Lambda_0) \otimes_{\Lambda_0} \Lambda.$
\end{itemize}
The reader is referred to \cite{FOOO, FOOOToric1, FOOOToric2, FOOOToric3} for details.

Now, we restrict to the case of a Lagrangian GC torus fiber $\Phi_\lambda^{-1}(\mathbf{u})$ in a co-adjoint orbit $\mcal{O}_\lambda$. As a deformation of the action of the Borel subgroup, Kogan and Miller \cite{KoM} realized the toric degeneration of a flag manifold given by Gonciulea--Lakshmibai \cite{GL}.
Using the degeneration of a (partial) flag manifold to the GC toric variety, Nishinou--Nohara--Ueda constructed a degeneration of the GC system of $\mcal{O}_\lambda$ to the moment map of the toric variety.

\begin{theorem}[Theorem 1.2 in \cite{NNU}]\label{NNUtoricdeg}
For any non-increasing sequence $\lambda = ( \lambda_1 \geq \dots \geq \lambda_n )$, there exists a toric degeneration of the Gelfand--Cetlin system $\Phi_\lambda$ on the co-adjoint orbit $(\mcal{O}_\lambda, \omega_\lambda)$ in the following sense.
\begin{enumerate}
\item There is a flat family $f \colon \mcal{X} \to I = [0,1]$ of algebraic varieties and a two form $\widetilde{\omega}$ on $\mcal{X}$ such that
\begin{enumerate}
\item ${X}_0 := f^{-1}(0)$ is the toric variety associated with the Gelfand--Cetlin polytope $\Delta_\lambda$ and $\omega_0 := \widetilde{\omega}|_{X_0}$ is a torus-invariant K{\"a}hler form.
\item ${X}_1 := f^{-1}(1)$ is the co-adjoint orbit $\mcal{O}_\lambda$ and $\omega_1 = \widetilde{\omega}|_{X_1}$ is the Kirillov--Kostant--Souriau symplectic form {$\omega_\lambda$}.
\end{enumerate}
\item There is a family $\{\Phi_t \mid X_t \rightarrow \Delta_\lambda\}_{0 \leq t \leq 1}$ of completely integrable systems such that $\Phi_0$ is the moment map for the torus action on $X_0$ and $\Phi_1$ is the Gelfand--Cetlin system.
\item Let $\Delta^{\textup{sm}}_\lambda := \Delta_\lambda \backslash \Phi_0 \left( \textup{Sing} (X_0) \right)$ and
$X^{\textup{sm}}_t := \Phi_t^{-1}(\Delta^{\textup{sm}}_\lambda )$ where $\textup{Sing} (X_0)$ is the set of singular points of $X_0$.
Then there exists a flow $\phi_t$ on $\mcal{X}$ such that for each $0 \leq t \leq s$,
the restricted flow $\phi_t |_{X^{\textup{sm}}_{s}} \colon X^{\textup{sm}}_{s} \to X^{\textup{sm}}_{s-t}$ respects the symplectic structures and the complete integrable systems:
\[
	\xymatrix{
		  (X^{\textup{sm}}_{s}, \omega_s)  \ar[dr]_{\Phi_s} \ar[rr]^{ \phi_t|_{X^{\textup{sm}}_{s}} }  & & (X^{\textup{sm}}_{s-t}, \omega_{s-t})
      \ar[dl]^{\Phi_{s-t}} \\
  & \Delta^{\textup{sm}}_\lambda &}
\]
\end{enumerate}
\end{theorem}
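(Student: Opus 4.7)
The plan is to proceed in three stages: construct the flat family of projective varieties together with a closed two-form, exhibit an interpolating family of completely integrable systems, and then build the fibre-preserving flow by a gradient--Hamiltonian construction. The first two stages are essentially algebraic; the third is geometric and will be the main technical obstacle.

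For (1), I would invoke the toric degeneration of $\mathrm{Fl}(n)$ due to Gonciulea--Lakshmibai, reformulated symplectically by Kogan--Miller as a deformation of the Borel subgroup action. This produces a flat projective family $f\colon \mcal{X}\to\mathbb{A}^{1}$ whose central fibre $X_0$ is the toric variety of $\Delta_\lambda$ and whose generic fibre is the flag variety. Restricting the base to $I=[0,1]$ and fixing an equivariant projective embedding determined by $\lambda$ (after rescaling by a positive rational to make $\lambda$ integral, then passing to a limit), the family sits in $\mathbb{P}^{N}\times I$; the pullback of the Fubini--Study form furnishes the closed two-form $\widetilde{\omega}$. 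Its restriction to $X_1$ is the KKS form $\omega_\lambda$ by the Borel--Weil--Kirillov correspondence, and its restriction to $X_0$ is a torus-invariant K\"ahler form by construction.

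For (2), I would transport the Gelfand--Cetlin functions through the degeneration. At $t=1$ they arise as eigenvalues of nested Hermitian blocks and span a commutative Poisson subalgebra of $C^{\infty}(\mcal{O}_\lambda)$. Under the Gonciulea--Lakshmibai degeneration, these functions specialise exactly to the moment-map coordinates of $X_0$, because the degeneration is compatible with the partial torus action generated by the intermediate eigenvalue functions. This produces the family of continuous maps $\Phi_t\colon X_t\to\Delta_\lambda$, which are smooth and Poisson-commuting on the preimage of $\Delta_\lambda^{\mathrm{sm}}$.

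The heart of the proof, and the main obstacle, is (3). On the smooth locus $\mcal{X}^{\mathrm{sm}}:=\mcal{X}\setminus\mathrm{Sing}(X_0)$ the map $f$ is a submersion, and using the K\"ahler metric induced by $\widetilde{\omega}$ I would introduce Ruan's gradient--Hamiltonian vector field
\[
V \;:=\; -\,\frac{\nabla(\mathrm{Re}\, f)}{\lvert\nabla f\rvert^{2}},
\]
which satisfies $df(V)=-1$ and is Hamiltonian for $\mathrm{Im}\, f$ with respect to $\widetilde{\omega}$. Its flow $\phi_t$ therefore decreases the base coordinate linearly while preserving the fibrewise symplectic form, yielding symplectomorphisms $X^{\mathrm{sm}}_{s}\to X^{\mathrm{sm}}_{s-t}$. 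Because the components of $\Phi_t$ were arranged in Step~(2) to be transported as holomorphic functions along the degeneration, they are invariant along $V$, so the triangle in (3) commutes. The delicate point, and the reason one must restrict to $\Delta_\lambda^{\mathrm{sm}}$, is that $V$ blows up along $\mathrm{Sing}(X_0)$: one must verify that excising exactly $\Phi_0(\mathrm{Sing}(X_0))$ from $\Delta_\lambda$ suffices to make $\phi_t$ well-defined and complete on $X^{\mathrm{sm}}_{s}$, and that the resulting symplectomorphisms depend continuously on $t$ up to the singular fibre.
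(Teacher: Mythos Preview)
This theorem is not proved in the present paper; it is quoted verbatim as Theorem~1.2 of Nishinou--Nohara--Ueda \cite{NNU}, and the paper simply uses it as a black box. So there is no ``paper's own proof'' to compare against here.

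That said, your outline is broadly the NNU strategy: Gonciulea--Lakshmibai/Kogan--Miller for the flat family, and Ruan's gradient--Hamiltonian vector field for the flow in part~(3). Two points in your sketch would not survive as written. First, the Gelfand--Cetlin components are eigenvalue functions of Hermitian submatrices; they are continuous, piecewise smooth, and certainly not holomorphic, so the sentence ``the components of $\Phi_t$ were arranged \ldots\ to be transported as holomorphic functions along the degeneration, hence invariant along $V$'' is not a valid argument for the commutativity of the triangle. In \cite{NNU} the intertwining $\Phi_{s-t}\circ\phi_t=\Phi_s$ is obtained by showing that the degeneration is equivariant for the Thimm torus action generated by the smooth GC components and then arguing separately on the non-smooth locus; this requires real work, not a one-line holomorphicity remark. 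Second, you correctly flag that $V$ blows up along $\mathrm{Sing}(X_0)$ and that completeness of the flow on $X_s^{\mathrm{sm}}$ is the crux, but you do not indicate how to prove it. NNU handle this by a careful estimate showing that trajectories starting in $\Phi_s^{-1}(\Delta_\lambda^{\mathrm{sm}})$ stay a definite distance from the singular set for all $t\in[0,s]$; without such an estimate the construction of $\phi_t$ is incomplete.
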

Let $\phi^\prime_s \colon X_s \to X_0$ be a (continuous) extension of the flow $\phi_s \colon X^{\textup{sm}}_s \to X^{\textup{sm}}_0$ in Theorem~\ref{NNUtoricdeg} (\cite[Section 8]{NNU}). The extended map $\phi^\prime_s$ transports Floer theory data from the toric moment map to a nearby integrable system.
As the deformation in Theorem~\ref{NNUtoricdeg} is a family of Fano varieties and the GC toric variety admits a small resolution at the singular loci, any holomorphic discs bounded by $L$ intersecting the loci collapsing to the singular loci of $X_0$ must have the Maslov index strictly greater than two so that such discs do \emph{not} contribute to the potential function. Furthermore, because the Fredholm regularity is an open condition, the holomorphic discs of Maslov index two intersecting the toric divisor in the toric variety $X_0$ give rise to regular holomorphic discs at $X_s$ for sufficiently small $s$. Set $X := X_s$ and let $L$ be any Lagrangian torus fiber of $\Phi_s \colon X_s \to \Delta_\lambda$. Combining those with the results of Cho--Oh \cite{CO}, Nishinou--Nohara--Ueda proved the followings.

\begin{enumerate}
\item Each Lagrangian torus fiber $L$ does not bound any non-constant holomorphic discs whose classes are of Maslov index less than or equal to zero. 
\item Every class $\beta \in \pi_2 (X, L)$ of Maslov index two is Fredholm regular. 
\item There is a one-to-one correspondence between the holomorphic discs of Maslov index two bounded by a Lagrangian GC torus fiber and the facets of GC polytope. 
\item For each class $\beta$ of Maslov index 2, the open Gromov--Witten invariant $n_\beta$, which counts the holomorphic discs passing through a generic point in $L$ and representing $\beta$, is $1$. 
\end{enumerate}

The above conditions imply that every $1$-cochain in $H^1(L; \Lambda_0)$ is a weak bounding cochain and hence the potential function can be defined on $H^1(L; \Lambda_0)$. To extend deformation space from $H^1(L; \Lambda_+)$ to $H^1(L; \Lambda_0)$, one twists Floer theory by the holonomy of flat non-unitary line bundles as in \cite{Cho}. Then the potential function is expressed as
\begin{equation}\label{certainpotential}
W \left(L ; b \right) = \sum_{\beta} n_\beta \cdot \exp(\pa \beta \cap b) \, T^{\omega(\beta) / 2 \pi}
\end{equation}
where the summation is taken over all homotopy classes in $\pi_2 (X, L)$ of Maslov index two.

Setting
\begin{equation}\label{gamman}
\Gamma(n) = \{ (i, j) \mid 2 \leq i + j \leq n\},
\end{equation}
we fix the basis $\{ \gamma_{i,j} \mid (i,j) \in \Gamma(n) \}$ for $H^1(L; \Z)$ dual to the basis
for $H_1(L, \Z)$ consisting of the orbits generated by periodic Hamiltonians $\{ u_{i,j} \mid (i,j) \in \Gamma(n) \}$ in \cite{GSflag}. 
A $1$-cochain $b \in H^1(L; \Lambda_0)$ is expressed as the linear combination  $\sum_{(i,j) \in \Gamma(n)} x_{i,j} \cdot \gamma_{i,j}$ and  
take the exponential variables
\begin{equation}\label{exponencoord}y_{i,j} := e^{x_{i,j}}.
\end{equation}
Then the potential function can be expressed as a Laurent polynomial $W(\mathbf{y})$ with respect to $\{ y_{i,j} \mid (i,j) \in \Gamma(n) \}$. Setting $u_{i,n+1-i}:= \lambda_{i}$, keep in mind that $\Delta_\lambda$ is defined by
$$
\left\{ (u_{i,j}) \in \R^{n(n-1)/2} \mid u_{i, j+1} - u_{i,j} \geq 0, \, u_{i,j} - u_{i+1, j} \geq 0 \, \right\}.
$$

\begin{theorem}[Theorem 10.1 in \cite{NNU}]
Consider the Lagrangian torus $L$ over a point $( u_{i,j} \mid (i,j) \in \Gamma(n) )$
and set $u_{i,n+1-i}= :\lambda_{i}$.
Then the potential function on $L$ is given by
\begin{equation}\label{potentialfunctionoriginal}
W(L; \textup{\textbf{y}}) = \sum_{(i,j)} \left( \frac{y_{i, j+1}}{y_{i,j}}
\, T^{ u_{i,j+1} - u_{i,j}} + \frac{y_{i,j}}{y_{i+1, j}} \, T^{ u_{i,j} - u_{i+1, j}}  \right).
\end{equation}
\end{theorem}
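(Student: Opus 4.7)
The strategy is to reduce the computation to the toric setting via the Nishinou--Nohara--Ueda toric degeneration and then invoke the Cho--Oh disc count for toric Lagrangian fibers. First I would apply Theorem~\ref{NNUtoricdeg} to transport the Floer data from the GC system $\Phi_\lambda$ on $\mcal{O}_\lambda$ to the toric moment map $\Phi_0$ on $X_0$: for sufficiently small $s$ the extended flow $\phi'_s$ identifies the Lagrangian torus fiber $L$ over $\mathbf{u}$ with the toric fiber over the same point, and the four itemized properties (1)--(4) preceding the theorem guarantee that the holomorphic discs of Maslov index two on the toric side survive the transport and exhaust all contributions to $W$. Plugging $n_\beta = 1$ and the bijection with facets into \eqref{certainpotential} then yields
\[
W(L;b) \;=\; \sum_{F_k} \exp\bigl(\langle v_k, x\rangle\bigr)\, T^{\ell_k(\mathbf{u})},
\]
where $F_k$ runs over the facets of $\Delta_\lambda$, $v_k$ is the primitive integral inward normal, and $\ell_k(\mathbf{u}) = \omega(\beta_k)/2\pi$ is the affine distance from $\mathbf{u}$ to $F_k$.

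Next I would enumerate the facets of $\Delta_\lambda$. The polytope is cut out by the inequalities $u_{i,j+1} - u_{i,j} \geq 0$ and $u_{i,j} - u_{i+1,j} \geq 0$, subject to the boundary identifications $u_{i,n+1-i} = \lambda_i$. Because the Guillemin--Sternberg Hamiltonians $\{u_{i,j}\}_{(i,j)\in\Gamma(n)}$ give exactly the basis of $H_1(L;\Z)$ dual to $\{\gamma_{i,j}\}$, the primitive inward normal of $\{u_{i,j+1} = u_{i,j}\}$ is $e_{i,j+1} - e_{i,j}$, with affine distance $u_{i,j+1}-u_{i,j}$ from $\mathbf{u}$, and similarly for $\{u_{i,j} = u_{i+1,j}\}$. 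After passing to the exponential variables $y_{i,j} = e^{x_{i,j}}$ of \eqref{exponencoord} (with the convention that $y_{i,n+1-i}$ is treated as $1$ whenever the index lies outside $\Gamma(n)$, reflecting that the corresponding $u$ is frozen to $\lambda_i$), the two families of facets contribute respectively $\frac{y_{i,j+1}}{y_{i,j}}\,T^{u_{i,j+1}-u_{i,j}}$ and $\frac{y_{i,j}}{y_{i+1,j}}\,T^{u_{i,j}-u_{i+1,j}}$, and summing over all such pairs $(i,j)$ produces the stated formula.

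The main obstacle, and the place where the proof genuinely uses more than Cho--Oh, is that $X_0$ is singular along higher-codimension strata associated to collapsing sub-ladders of the diagram. One must verify that every holomorphic disc meeting the singular locus of $X_0$ carries Maslov index strictly greater than two, so that it does not contribute to $W$. This is precisely the content of (1) above, and it is supplied in~\cite{NNU} by combining the Fano property of the degenerating family of~\cite{GL, KoM} with the existence of small resolutions of $X_0$ along its singular loci, which together force the Maslov index bound; Fredholm regularity of the remaining Maslov-index-two discs is an open condition and hence persists at nearby $X_s$. Once these geometric inputs are granted, the remainder of the argument reduces to the purely combinatorial facet count outlined above.
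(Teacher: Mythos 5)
Your proposal is correct and takes essentially the same approach as the paper, which itself imports this result from Nishinou--Nohara--Ueda: use the toric degeneration of Theorem~\ref{NNUtoricdeg} to transport the problem to the GC toric variety $X_0$, invoke the Cho--Oh classification of Maslov index two discs together with the Fano/small-resolution argument excluding contributions from discs near the singular locus, and then read off the facet contributions. One small misattribution: the statement that discs meeting the singular locus of $X_0$ carry Maslov index strictly greater than two is what underwrites the \emph{bijection with facets} in item (3) rather than item (1) (item (1) is the separate assertion that no nonconstant discs of Maslov index $\le 0$ exist), but this does not affect the validity of the argument.
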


For $t$ with $0 \leq t < 1$, we can arrange the potential function of the fibers $L_m(t)$ over $I_m(t)$ as
\begin{align}\label{potential}
\begin{split}
&W(L_m(t); \mathbf{y}) = \left( \sum_{i = 1}^{m} \sum_{j=1}^{m-1} \frac{y_{i, j+1}}{y_{i,j}}+ \sum_{i= 1}^{m-1} \sum_{j=1}^{m} \frac{y_{i, j}}{y_{i+1,j}}\right) T^{1-t} \\ 
& \,\,\,\,\, + \left(  \sum_{\substack{\max(i, j) \geq m+1 }} \left( \frac{y_{i, j+1}}{y_{i,j}}+ \frac{y_{i, j}}{y_{i+1,j}} \right) \right) T^{1} 
+ \left( \sum_{i=0}^{m-1} \left( \frac{y_{m-i,m+1}}{y_{m-i,m}} + \frac{y_{m,m-i}}{y_{m+1,m-i}} \right) \right)  T^{1 + it} 
\end{split}
\end{align}
For simplicity, we frequently omit $L_m(t)$ in $W(L_m(t); \mathbf{y})$ if $L_m(t)$ is clear in the context.

%------------------------------------------------------------------------------------------
\subsection{Bulk-deformations by Schubert cycles}\label{section:bulkdeform}

We shall apply Lagrangian Floer theory deformed by ambient cycles of a symplectic manifold, developed
 in \cite{FOOO, FOOOToric2}. We will exploit the cycles of the form
$$
\frak{b}:= \sum_{j=1}^B \frak{b}_j \cdot \scr{D}_j,
$$
where each $\scr{D}_j$ is a cycle of degree two not intersecting $L$.
The deformed potential function is denoted by $W^\frak{b}$.

We first recall  from \cite{FOOOToric2} the formula for the potential function of a torus fiber $L$ deformed by a combination of toric divisors $\scr{D}_j$,
$
\frak{b}:= \sum_{j=1}^B \frak{b}_j \cdot \scr{D}_j,
$
in a compact toric manifold $X$.

\begin{theorem}[\cite{FOOOToric2}]\label{Foootoric2potenti} The bulk-deformed potential function, also called the potential function with bulk, is \begin{equation}\label{bulkdeformedpotential}
W^\frak{b}\left( L ; b \right) = \sum_{\beta} n_\beta \cdot \exp \left( \sum_{j=1}^B \left( \beta \cap \scr{D}_{j} \right) \frak{b}_{j} \right) \exp(\pa \beta \cap b) \, T^{\omega(\beta) / 2 \pi}.
\end{equation}
where the summation is taken over all homotopy classes in $\pi_2 (X, L)$ of Maslov index two.
\end{theorem}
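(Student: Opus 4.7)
The plan is to derive the formula directly from the definition of the bulk-deformed $A_\infty$-structure of Fukaya--Oh--Ohta--Ono specialized to the toric case, where Maslov index $2$ discs bounded by $L$ are classified by Cho--Oh. First I would recall that the bulk-deformed operations $\frak{m}_k^{\frak{b}}$ are defined by counting $J$-holomorphic discs bounded by $L$ with $k$ ordered boundary marked points (evaluating against $b$) and an arbitrary number $\ell$ of ordered interior marked points constrained to lie on the cycle $\frak{b} = \sum_{j=1}^{B} \frak{b}_j \scr{D}_j$, weighted by $\tfrac{1}{\ell!}$; the potential function $W^{\frak{b}}(b)$ is then the coefficient of $\textup{PD}[L]$ in $\sum_{k \geq 0} \frak{m}_k^{\frak{b}}(b^{\otimes k})$.

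Next I would invoke the Cho--Oh classification: in a compact toric manifold $X$, every class $\beta \in \pi_2(X, L)$ of Maslov index $2$ corresponds bijectively to a toric prime divisor, the moduli space $\mcal{M}_{1}(L, \beta)$ of Fredholm regular discs with one boundary marked point evaluates onto $L$ with degree $n_\beta = 1$, and no non-constant discs of Maslov index $\leq 0$ exist. Since the weak Maurer--Cartan sum is a multiple of $\textup{PD}[L]$, only classes of Maslov index $2$ contribute to $W^\frak{b}$. For such a $\beta$ and a fixed $\ell$-tuple of interior constraints labeled by $(j_1,\dots,j_\ell) \in \{1,\dots,B\}^\ell$, positivity of intersection between the holomorphic disc and the toric divisors $\scr{D}_j$ (which are disjoint from $L$) reduces the constrained count to the topological product $\prod_{s=1}^{\ell} (\beta \cap \scr{D}_{j_s})$; summing over all labelings weighted by $\frac{1}{\ell!}\prod_s \frak{b}_{j_s}$ telescopes the multinomial expansion into the exponential factor $\exp \bigl( \sum_{j=1}^B (\beta \cap \scr{D}_j)\, \frak{b}_j \bigr)$.

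For the boundary insertions, I would use that $b \in H^1(L;\Lambda_0)$ is extended via the holonomy of flat (non-unitary) line bundles in the sense of Cho. Applying the divisor axiom on the boundary marked points together with the cyclic symmetry of the $k$ insertions of $b$, the sum over $k$ collapses to the exponential $\exp(\pa \beta \cap b)$. Multiplying by $n_\beta = 1$ and the $T$-adic Gromov weight $T^{\omega(\beta)/2\pi}$, and summing over all Maslov index $2$ classes $\beta$, assembles the claimed formula~\eqref{bulkdeformedpotential}.

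The principal technical burden, and the step I expect to be the main obstacle, is verifying that the moduli spaces of bulk-constrained Maslov index $2$ discs remain virtually transverse of the expected dimension and that their compactifications do not introduce extra terms into the potential beyond those absorbed by the $A_\infty$-relations. The first reduces to a generic choice of smooth representatives of $\scr{D}_j$ combined with the Cho--Oh regularity of the unconstrained moduli; the second follows from the observation that every bubble component in a limit configuration is itself a disc of Maslov index $\geq 2$, so that a stable Maslov $2$ configuration cannot acquire a nontrivial disc bubble and no new top-stratum boundary contribution arises. The full arrangement of these verifications in the Kuranishi framework is carried out in \cite{FOOOToric2}.
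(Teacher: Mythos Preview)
Your proposal is correct and follows essentially the same route as the paper's derivation in Section~\ref{sec_bulkdefbyschucy} (which adapts the argument of \cite[Section 7]{FOOOToric2} to the GC setting): define the bulk operations via interior marked points constrained to the cycles $\scr{D}_j$, restrict to Maslov index two classes using the Cho--Oh classification and the absence of lower-index discs, and reduce both interior and boundary insertions to exponentials via divisor-type identities. The only cosmetic differences are that the paper phrases the interior reduction as the divisor axiom of Gromov--Witten theory (\cite[p.~193]{CK}, \cite[Lemma 9.2]{FOOOToric2}) via compatibility of forgetful maps rather than ``positivity of intersection,'' and obtains the boundary factor $\tfrac{1}{k!}(\partial\beta\cap b)^k$ from forgetful-map compatibility (\cite[Section 5]{Fuk}) rather than ``cyclic symmetry''; these amount to the same computations.
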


In the derivation of this in \cite{FOOOToric2}, the smoothness and the $T^n$-invariance of
the relevant ambient cycles are used. Since Schubert cycles that we will use 
are neither smooth nor $T^n$-invariant in general, we will provide details of
the proof of this theorem for the current GC case modifying the arguments used in the proof of \cite[Proposition 4.7]{FOOOToric2} similarly as done in \cite[Section 9]{NNU}, see Section~\ref{sec_bulkdefbyschucy}. The upshot is that we still have the same formula for the potential function with bulk in the current GC case, see~\eqref{bulkdeformedpotential}.
Again by taking the system of exponential coordinates in~\eqref{exponencoord}, $W^\frak{b}$ in~\eqref{bulkdeformedpotential} becomes a Laurent polynomial with respect to $\{ y_{i,j} \mid (i,j) \in \Gamma(n) \}$.

\begin{theorem}[Section 8 in \cite{FOOOToric2}]\label{foootoric2nondispl}\label{criticalpointimpliesnondisplaceability}
If the bulk-deformed potential function $W^\frak{b} (L; \mathbf{y})$ admits a critical point $\mathbf{y}$ whose components are in $\Lambda_U$, then $L$ is non-displaceable.
\end{theorem}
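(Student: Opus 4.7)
The strategy is the standard Fukaya--Oh--Ohta--Ono paradigm: convert a critical point of $W^\frak{b}$ into a weak bounding cochain at which the bulk-deformed Floer cohomology is non-vanishing, and then invoke the Hamiltonian invariance of Lagrangian intersection Floer cohomology.

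First, from a critical point $\mathbf{y}^* = (y^*_{i,j}) \in (\Lambda_U)^{\dim L}$ of $W^\frak{b}$, I set $x^*_{i,j} := \log y^*_{i,j}$ and define
\[
b^* \;:=\; \sum_{(i,j) \in \Gamma(n)} x^*_{i,j}\,\gamma_{i,j} \;\in\; H^1(L;\Lambda_0).
\]
The hypothesis $\mathbf{y}^* \in (\Lambda_U)^{\dim L}$ is used precisely to ensure that the $T$-adic logarithm converges, so that $b^*$ is a genuine element of $H^1(L;\Lambda_0)$. That $b^*$ is a weak bounding cochain for the $\frak{b}$-deformed $A_\infty$-algebra is automatic once one has the Maslov-two positivity for GC torus fibers recalled after Theorem~\ref{NNUtoricdeg}, extended to the bulk-deformed setting in Section~\ref{sec_bulkdefbyschucy}.

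Next, I would translate the critical point equation into the vanishing of the Floer differential on $H^1(L;\Lambda_0)$ via the divisor identity
\[
\frak{m}_1^{\frak{b},b^*}(\gamma_{k,l}) \;=\; y^*_{k,l}\,\frac{\partial W^\frak{b}}{\partial y_{k,l}}(\mathbf{y}^*)\cdot \textup{PD}[\mathrm{pt}_L],
\]
obtained by differentiating~\eqref{bulkdeformedpotential} term by term in the exponential coordinates~\eqref{exponencoord}. Because $\mathbf{y}^*$ is a critical point, the right-hand side vanishes for every $(k,l)$, so $\frak{m}_1^{\frak{b},b^*}$ vanishes on the generating set $H^1(L;\Lambda_0)$. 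Since $L$ is a torus, $H^\bullet(L;\Lambda)$ is generated as an algebra by $H^1$ under the cup product, and the $A_\infty$-relations propagate vanishing from $H^1$ to all of $H^\bullet(L;\Lambda)$. Consequently $HF^\frak{b}((L,b^*);\Lambda)\cong H^\bullet(L;\Lambda)\neq 0$. If there were a Hamiltonian diffeomorphism $\phi$ of $X$ with $\phi(L)\cap L=\emptyset$, then the bulk-deformed intersection Floer cohomology $HF^\frak{b}((\phi(L),\phi_*b^*),(L,b^*);\Lambda)$ would vanish trivially, contradicting its Hamiltonian invariance together with the non-vanishing just established. Hence $L$ is non-displaceable.

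The principal technical obstacle is justifying the divisor identity displayed above in the present setting: in~\cite{FOOOToric2} its proof relies on both the $T^n$-symmetry and the smoothness of the toric divisors, while the Schubert cycles that make up $\frak{b}$ are in general neither smooth nor torus-invariant. This is circumvented by the moduli-space and transversality analysis carried out in Section~\ref{sec_bulkdefbyschucy}, combined with the Maslov-index and Fredholm-regularity facts borrowed from~\cite{NNU}, which confine the contributing discs to those of Maslov index two and reduce their counts to the combinatorics of the GC polytope. The $T$-adic convergence of the series $\exp(\partial\beta \cap b^*)$ appearing in the differential is a subsidiary matter that is taken care of exactly by the hypothesis $\mathbf{y}^* \in (\Lambda_U)^{\dim L}$.
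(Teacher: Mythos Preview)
The paper does not supply a proof of this statement; it is quoted from \cite{FOOOToric2}. Your sketch follows the intended argument and is correct in outline, with two technical points worth sharpening.

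First, in the paper's conventions $\textup{PD}[L]\in H^0$ is the strict unit and Maslov-two contributions to $\frak m_1$ lower cohomological degree by one on the canonical model; your displayed identity should therefore read
\[
\frak m_1^{\frak b,b^*}(\gamma_{k,l})\;=\;y^*_{k,l}\,\frac{\partial W^{\frak b}}{\partial y_{k,l}}(\mathbf y^*)\cdot\textup{PD}[L],
\]
landing in $H^0$, not $\textup{PD}[\mathrm{pt}_L]\in H^{\dim L}$. Relatedly, the passage from $\mathbf y^*\in(\Lambda_U)^{\dim L}$ to $b^*\in H^1(L;\Lambda_0)$ is not a pure $T$-adic logarithm: the leading $\C^*$-part must be absorbed into the holonomy of a flat line bundle as in \cite{Cho}, which is exactly what the paper invokes after~\eqref{certainpotential}.

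Second, the step ``the $A_\infty$-relations propagate vanishing from $H^1$ to all of $H^\bullet$'' is not quite complete as stated: the Leibniz rule for $\frak m_1^{\frak b,b^*}$ holds over the quantum product $\frak m_2^{\frak b,b^*}$, not the cup product, so you must in addition argue (via a leading-term comparison with cup product) that iterated $\frak m_2^{\frak b,b^*}$-products of the $\gamma_{k,l}$ span $H^\bullet(L;\Lambda)$. In \cite{FOOOToric1,FOOOToric2} this is circumvented by an explicit computation of $\frak m_1^{\frak b,b}$ on all of $H^\bullet(T^n)$ in the Fano case, which vanishes identically at a critical point and yields $HF^{\frak b}((L,b^*);\Lambda)\cong H^\bullet(L;\Lambda)$ directly. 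Either route closes the gap; the remainder of your argument (Hamiltonian invariance and the modifications in Section~\ref{sec_bulkdefbyschucy}) is sound.
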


In his thesis \cite{Ko}, Kogan found an expression of a Schubert cycle in terms of a certain union of the inverse images of faces in the GC system of a complete flag manifold, see also Kogan--Miller \cite{KoM}. Due to presence of non-torus fibers in \cite{CKO}, the inverse image of a \emph{single} face might have boundary so that it does \emph{not} form a cycle. What he proved is that a certain combination of faces can form a cycle because the boundaries are cancelled out.

We review the result in terms of ladder diagrams. A facet in a GC polytope is called \emph{horizontal} (resp. \emph{vertical}) if it is given by $u_{i,j} = u_{i+1,j}$ (resp. $u_{i, j+1} = u_{i,j}$). Let $P^\textup{hor}_{i,i+1}$ (resp. $P^\textup{ver}_{j+1,j}$) be the union of horizontal (resp. vertical) facets between the $i$-th column and the $(i+1)$-th column (resp. the $(j+1)$-th row and the $j$-th row). That is,
\begin{align*}
P^\textup{hor}_{i,i+1} := \bigcup_{s=1}^{n-i} \{ u_{i, s} = u_{i+1, s} \}, \quad P^\textup{ver}_{j+1, j} := \bigcup_{r=1}^{n-j} \{ u_{r, j+1} = u_{r, j} \}
\end{align*}
for $1 \leq i, j  \leq n - 1$ where $\{ u_{\bullet, \bullet} = u_{\bullet, \bullet}\}$ denotes the facet given by the equation inside.
Let
\begin{equation}\label{eq:D-ver-hor}
\scr{D}^\textup{hor}_{i,i+1} := \Phi^{-1}_\lambda \left( P^\textup{hor}_{i,i+1} \right), \quad
\scr{D}^\textup{ver}_{j+1,j} := \Phi^{-1}_\lambda \left( P^\textup{ver}_{j+1,j} \right),
\end{equation}
which are respectively called a \emph{horizontal} and \emph{vertical Schubert cycle} (of degree two).
(See Theorem~\ref{thm_KoganMiller} below.)

\begin{example}
Consider the co-adjoint orbit $\mcal{O}_\lambda \simeq \mathrm{Fl}(6)$ where $\lambda = (5, 3, 1, -1, -3, -5)$.
 $P^\textup{hor}_{4,5}$ is the union of two horizontal facets $P^\textup{hor}_{4,5} = \{ u_{4,2} = -3 \} \cup \{ u_{4,1} = u_{5,1} \}$
as in Figure~\ref{HorizontalP45} and $P^\textup{ver}_{4,3}$ is the union of three vertical facets $P^\textup{ver}_{4,3} = \{ 1 = u_{3,3} \} \cup \{ u_{2,4} = u_{2,3} \} \cup \{u_{1,4} = u_{1,3} \}$
as in Figure~\ref{HorizontalP45}.

\begin{figure}[ht]
	\scalebox{0.9}{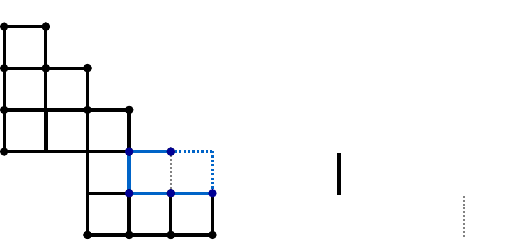}
	\quad \quad \quad \quad \,\,\,
		\scalebox{0.9}{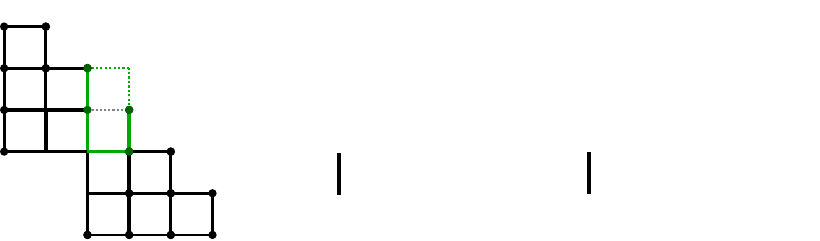}
	\caption{\label{HorizontalP45} $P^\textup{hor}_{4,5}$ and $P^\textup{ver}_{4,3}$ in $\mathrm{Fl}(6)$}	
\end{figure}
\end{example}

From the combinatorial process in \cite{Ko,KoM}, we observe that the Schubert varieties associated with the simple transpositions
with a complex codimension one are corresponding to either unions of horizontal facets or unions of vertical facets. The opposite Schubert varieties are corresponding to the other \cite[Remark 9]{KoM}.

\begin{theorem}[Theorem 2.3.1 in \cite{Ko}, Theorem 8 in \cite{KoM}]\label{thm_KoganMiller}
The inverse image $\scr{D}^\textup{hor}_{\bullet, \bullet+1}$ (or $\scr{D}^\textup{ver}_{\bullet+1, \bullet}$) represents an (or opposite) Schubert cycle of degree two.
\end{theorem}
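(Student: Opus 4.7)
The plan is to follow the strategy of Kogan \cite{Ko} and Kogan--Miller \cite{KoM} and deduce the assertion from the toric degeneration recalled in Theorem~\ref{NNUtoricdeg}. Throughout, I identify $\mathcal{O}_\lambda$ with $\mathrm{Fl}(n)$ via the standard Pl\"ucker embedding into a product of projective spaces, so that both Schubert cycles and the GC moment image can be read off from minors of a Hermitian matrix.

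First, I recall the Gonciulea--Lakshmibai degeneration underlying Theorem~\ref{NNUtoricdeg}: there is a flat family $\mathcal{X} \to I$ whose central fiber $X_0$ is the GC toric variety, and every Schubert variety $X_w \subset \mathrm{Fl}(n)$ extends to a flat family inside $\mathcal{X}$. Hence the cycle class $[X_w]$ is preserved by the degeneration, and it suffices to identify the flat limit of $X_{s_i}$ (respectively the opposite Schubert divisor $X^{s_j}$) with the reduced union of those toric divisors of $X_0$ that map under $\Phi_0$ onto the horizontal (respectively vertical) union of facets $P^{\textup{hor}}_{i,i+1}$ (respectively $P^{\textup{ver}}_{j+1,j}$).

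Second, to carry out the identification of flat limits, I work at the level of defining equations. The simple Schubert divisor $X_{s_i}$ is cut out by the vanishing of a single Pl\"ucker coordinate corresponding to the subset obtained from $\{1,\dots,i\}$ by the transposition $s_i$. Computing the initial ideal of this Pl\"ucker equation with respect to the degeneration weight of \cite{KoM}, I obtain a monomial expressing the flat limit as the sum (with multiplicity one) of toric boundary divisors indexed exactly by the horizontal facets $\{u_{i,j} = u_{i+1,j}\}$ for $j = 1,\dots, n-i$. A parallel computation with the opposite Pl\"ucker coordinate yields the vertical case. This is where the combinatorics of \cite[Section 2]{Ko} enters; the one observation needed is that the simple transpositions $s_i$ (and $s_j$ for the opposite family) are precisely those for which the initial ideal is squarefree.

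Third, I transport the conclusion back to the general fiber. Using the flow $\phi_s \colon X^{\textup{sm}}_s \to X^{\textup{sm}}_0$ of Theorem~\ref{NNUtoricdeg}, which intertwines $\Phi_s$ with the toric moment map $\Phi_0$ over the smooth locus $\Delta^{\textup{sm}}_\lambda$, the toric divisor $\Phi_0^{-1}(P^{\textup{hor}}_{i,i+1})$ is carried to $\scr{D}^{\textup{hor}}_{i,i+1} = \Phi_\lambda^{-1}(P^{\textup{hor}}_{i,i+1})$ away from the singular stratum. Combined with the preservation of cycle classes under flat degeneration, this yields the homological identification
\[
[\scr{D}^{\textup{hor}}_{i,i+1}] = [X_{s_i}], \qquad [\scr{D}^{\textup{ver}}_{j+1,j}] = [X^{s_j}]
\]
in $H_{\dim_\R \mathrm{Fl}(n) - 2}(\mathrm{Fl}(n); \Z)$.

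The main obstacle is the behavior over $\mathrm{Sing}(X_0)$: since $\phi_s$ extends only continuously across the singular stratum and $P^{\textup{hor}}_{i,i+1}$ generically meets this stratum, I must verify that no embedded components appear in the flat limit and that the continuous extension $\phi'_s$ does not alter the homology class. Both follow from the fact that $X_0$ admits a small resolution along the GC strata, so the preimage of $\mathrm{Sing}(X_0) \cap P^{\textup{hor}}_{i,i+1}$ has real codimension at least four in $\mathrm{Fl}(n)$ and therefore contributes nothing to the degree-two cycle class; this is precisely the ingredient used in \cite[Section 9]{NNU} to handle the analogous issue for open Gromov--Witten invariants.
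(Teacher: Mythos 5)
The paper does not prove this statement; it is quoted from Kogan's thesis and Kogan--Miller with no argument supplied, so there is no internal proof to compare against. Your sketch does follow the same toric-degeneration strategy as those references, so the broad plan is sound, but two steps deserve scrutiny.

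The more serious one is your second step. The passage ``I obtain a monomial expressing the flat limit as the sum (with multiplicity one) of toric boundary divisors indexed exactly by the horizontal facets \ldots the one observation needed is that the simple transpositions $s_i$ are precisely those for which the initial ideal is squarefree'' is not a minor observation; it \emph{is} the content of Theorem~2.3.1 in \cite{Ko}. For the full flag in the product embedding, the limit of the Schubert divisor $\{p_J = 0\}$ in the central fiber $X_0$ is a Cartier divisor whose multiplicity along the toric boundary component corresponding to a facet $F$ of $\Delta_\lambda$ equals the lattice distance from the GC pattern attached to $p_J$ to $F$; identifying those multiplicities with the indicator function of the horizontal facets $\{u_{i,s} = u_{i+1,s}\}_{s=1,\dots,n-i}$, and verifying that no embedded components appear over the singular strata of $X_0$, requires a genuine computation in the GT semigroup algebra. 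As written you are in effect citing the result you set out to prove.

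The third step is essentially right in spirit but the dimension count needs adjustment. The preimage $(\phi'_\varepsilon)^{-1}\bigl(\mathrm{Sing}(X_0)\cap P^{\mathrm{hor}}_{i,i+1}\bigr)$ need not have real codimension $\geq 4$: already in $\mathrm{Fl}(3)$ the fiber over the vertex of $\Delta_\lambda$ is the Lagrangian $S^3$, of real codimension $3$. The weaker (and correct) bound ``codimension $\geq 3$'' is what one actually gets from the small resolution, and it is what is needed: a codimension-$2$ pseudomanifold whose singular stratum has codimension $\geq 1$ within it still carries a fundamental class, so $\scr{D}^{\mathrm{hor}}_{i,i+1}$ defines a degree-two cycle and the symplectomorphism $\phi_\varepsilon|_{X^{\mathrm{sm}}_\varepsilon}$ identifies its class with that of the toric divisor. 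You should also note explicitly that comparing the flat degeneration of $X_{s_i}$ with the transport of $\scr{D}^{\mathrm{hor}}_{i,i+1}$ through $\phi'_\varepsilon$ uses the homotopy equivalence $\mathcal{X}\simeq X_0$ of the total family; otherwise you are equating cycle classes in two different ambient spaces without providing the identification.
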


Now, we apply~\eqref{bulkdeformedpotential}, which is a counterpart of~\eqref{Foootoric2potenti}, to calculate the bulk-deformed potential function. Using the one-to-one correspondence in  the property (3) in Section~\ref{reviewpotentialpotentialfunc}, let $\beta^{i,j}_{i+1, j} (\textup{resp. } \beta^{i,j+1}_{i, j} )$
be the homotopy class in $\pi_2(\mcal{O}_\lambda, L)$ represented by a holomorphic disc intersecting the facet $u_{i,j} = u_{i+1,j}$ (resp. $u_{i,j+1} = u_{i,j}$) once.

\begin{lemma}\label{intersectionnumbercal}
Let $\scr{D}$ be either a horizontal or a vertical Schubert cycle in $X_s$. Then we have
$$
\beta^{i,j}_{i+1,j} \cap \scr{D} =
\begin{cases}
1 \quad \mbox{if } \scr{D} = \scr{D}^\textup{hor}_{i, i+1} \\
0 \quad \mbox{otherwise, }
\end{cases}
\quad
\beta^{i,j+1}_{i,j} \cap \scr{D} =
\begin{cases}
1 \quad \mbox{if } \scr{D} = \scr{D}^\textup{ver}_{j+1, j} \\
0 \quad \mbox{otherwise. }
\end{cases}
$$
\end{lemma}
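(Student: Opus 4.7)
The plan is to reduce the intersection calculation on $X = X_s$ to its toric counterpart on $X_0$ via the symplectic flow $\phi'_s$ of Theorem~\ref{NNUtoricdeg}. Under this transport, the Maslov index two disc class $\beta^{i,j}_{i+1,j}$ (resp. $\beta^{i,j+1}_{i,j}$) bounded by the GC torus fiber $L$ corresponds, via the bijection in property~(3) of Section~\ref{reviewpotentialpotentialfunc}, to the Cho--Oh disc class on the toric Lagrangian fiber in $X_0$ attached to the facet $\{u_{i,j} = u_{i+1,j}\}$ (resp. $\{u_{i,j+1} = u_{i,j}\}$) of $\Delta_\lambda$. By \cite{CO}, such a disc intersects the corresponding toric prime divisor transversally at a single point and is disjoint from every other toric prime divisor; equivalently, $\beta_F \cdot D_{F'} = \delta_{F,F'}$ on toric prime divisors $D_{F'}$.

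Next, by the very definition \eqref{eq:D-ver-hor}, the Schubert cycle $\mathscr{D}^\textup{hor}_{i,i+1}$ is the union of the $(n-i)$ preimages $\Phi_\lambda^{-1}(\{u_{i,s}=u_{i+1,s}\})$ for $s=1,\dots,n-i$, and analogously $\mathscr{D}^\textup{ver}_{j+1,j}$ is the union of $(n-j)$ vertical facet preimages. On the toric side these are unions of the corresponding toric prime divisors, and algebraic intersection numbers are topological, so they agree with the computation on $X_s$. Hence $\beta^{i,j}_{i+1,j} \cap \mathscr{D}$ is simply the number of horizontal facets of the form $\{u_{i,j} = u_{i+1,j}\}$ that appear among the defining facets of $\mathscr{D}$. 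This count is $1$ precisely when $\mathscr{D} = \mathscr{D}^\textup{hor}_{i,i+1}$ and $0$ otherwise; for any vertical cycle $\mathscr{D}^\textup{ver}_{j'+1,j'}$ it is $0$ since that cycle contains only preimages of vertical facets. The computation for $\beta^{i,j+1}_{i,j}$ is entirely symmetric with the roles of rows and columns interchanged.

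The main technical point to dispose of is that Schubert cycles are not smooth and that $\Phi_\lambda$ degenerates over lower-dimensional strata of $\Delta_\lambda$, so a priori the transverse intersection of a holomorphic disc with $\mathscr{D}^\textup{hor}_{i,i+1}$ or $\mathscr{D}^\textup{ver}_{j+1,j}$ need not be defined in the naive sense. We handle this by perturbing each Maslov index two disc representative so that it meets the top-dimensional smooth locus of the Schubert cycle transversally and avoids its codimension $\geq 2$ singular locus, which is possible by the Fredholm regularity in property~(2) of Section~\ref{reviewpotentialpotentialfunc}; since $\phi'_s$ is a diffeomorphism away from the singular strata of $\Phi_0$ and preserves the relevant homology classes, the signed count of transverse intersections on $X_s$ agrees with the toric count on $X_0$ used above.
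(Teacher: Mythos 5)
Your overall strategy matches the paper's: transport the computation to $X_0$ through the flow $\phi'_s$, note that the Schubert cycle $\scr{D}^\textup{hor}_{i,i+1}$ (resp.\ $\scr{D}^\textup{ver}_{j+1,j}$) is the union of preimages of the listed facets, and use the Cho--Oh classification of Maslov-index-two disc classes to conclude that $\beta_F \cdot D_{F'} = \delta_{F,F'}$ for facets $F,F'$. However, there is one technical point where your argument is off target. You isolate the non-smoothness of the Schubert cycle and the degeneration of $\Phi_\lambda$ over lower strata as the obstacles to overcome, and propose a general-position perturbation of the disc in $X_s$. But that is not the main obstruction: $X_s$ is already a smooth manifold and $\scr{D}$ defines a cycle disjoint from $L_s$, so the homological pairing $\beta \cap \scr{D}$ is well-defined without any perturbation. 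The real issue, which your argument does not address, is that $X_0$ is a \emph{singular} toric variety, so Cho--Oh's formula $\beta_F\cdot D_{F'}=\delta_{F,F'}$ (proved for smooth Fano toric manifolds) does not directly apply in $X_0$. The paper's proof handles this by passing to a small resolution $p\colon \widetilde{X}_0 \to X_0$: since holomorphic discs of Maslov index two miss a neighborhood of $\operatorname{Sing}(X_0)$, both the disc $\varphi_0$ and the relevant divisor lift without changing the intersection number, and the Cho--Oh count can then be carried out on the smooth toric manifold $\widetilde{X}_0$. Your write-up invokes the Cho--Oh count on $X_0$ as if $X_0$ were smooth, which leaves a genuine (if fillable) gap; the small-resolution step is the missing ingredient.
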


\begin{proof}
Let $\phi^\prime_s \colon X_s \to X_0$ be a (continuous) extension of the flow $\phi_s \colon X^{\textup{sm}}_s \to X^{\textup{sm}}_0$
in Theorem~\ref{NNUtoricdeg}, see \cite[Section 8]{NNU}.
Let $\varphi \colon (\mathbb{D}, \pa \mathbb{D}) \to (X_s, L_s)$ be a holomorphic disc in the class $\beta^{i,j}_{i+1,j}$ of Maslov index two for example.
We then have a (topological) disc $\phi^\prime_s \circ \varphi \colon (\mathbb{D}, \pa \mathbb{D}) \to (X_s, L_s) \to (X_0, L_0)$, representing $(\phi^\prime_s)_* \beta^{i,j}_{i+1,j}$. Note that there exists a holomorphic disc $\varphi_0$ by \cite{CO} in the class $(\phi^\prime_s)_* \beta^{i,j}_{i+1,j} = [\phi^\prime_s \circ \varphi]$. Meanwhile, by our choice of $\scr{D}$, $\phi^\prime_s (\scr{D})$ is the union of the components over either $P^\textup{hor}_{i, i+1}$ or $P^\textup{ver}_{j+1, j}$.
Since the flow $\phi^\prime_s$ gives rise to a symplectomorphism from $X^{\textup{sm}}_s$ to $X^{\textup{sm}}_0$ and the image of the disc $\varphi$ is contained in $X^{\textup{sm}}_s$, the (local) intersection number should be preserved through the flow $\phi^\prime_s$. To calculate the intersection number, we consider a small resolution $p \colon \widetilde{X}_0 \to X_0$. Because the intersection happens only outside the singular loci of $X_0$, we can lift the divisor and the disc $\varphi_0$ to $\widetilde{\scr{D}}_0$ and $\widetilde{\varphi}_0$ in $\widetilde{X}_0$ without any change of the intersection number. Then we have
$$
\beta^{i,j}_{i+1,j} \cap \scr{D} = [ \varphi] \cap \scr{D} = [ \widetilde{\varphi}_0 ]\cap \widetilde{\scr{D}}_0,
$$
which completes the proof.
\end{proof}

We take
\begin{equation}\label{bulkparak}
\frak{b} :=  \sum_{i} \frak{b}^\textup{hor}_{i, i+1} \cdot \varphi^\prime_{1-s} \left( \scr{D}^\textup{hor}_{i, i+1}  \right)
 + \sum_{j} \frak{b}^\textup{ver}_{j+1, j} \cdot \varphi^\prime_{1-s} \left( \scr{D}^\textup{ver}_{j+1, j} \right)
\end{equation}
where $\frak{b}^\textup{hor}_{i, i+1}, \frak{b}^\textup{ver}_{j+1, j} \in \Lambda_0$ and $\varphi^\prime_{1-s} \colon X_1 \to X_{s}$. By abuse of notation for simplicity, we denote $\varphi^\prime_{1-s} \left( \scr{D}^\textup{hor}_{i, i+1}  \right)$ (resp. $\varphi^\prime_{1-s} \left( \scr{D}^\textup{ver}_{j+1, j} \right)$) by $\scr{D}^\textup{hor}_{i, i+1}$ (resp. $\scr{D}^\textup{ver}_{j+1, j}$). By the homotopy invariance of the $A_\infty$-structures, we calculate the (bulk-deformed) Floer cohomology of $L_{m,s}(t)$ in $X_s$ for $s$ sufficiently close to $0$. In particular, non-displaceability of $L_{m}(t)$ can be achieved as long as the Floer cohomology of $L_{m,s}(t)$ is non-zero. Whenever turning on a bulk-deformation, this process passing to $L_{m,s}(t)$ and $\varphi^\prime_{1-s} (\scr{D})$  in $X_s$ will be taken into consideration.  Also, depending on the position $t$ of a Lagrangian torus $L_m( \, \cdot \,)$, we need to consider different $\frak{b}^\textup{hor}_{i, i+1}$ and $\frak{b}^\textup{ver}_{j+1, j}$.

\begin{corollary}\label{formulaforbulkdeformedpotentialoursitu}
Taking a bulk-deformation parameter as in~\eqref{bulkparak}, the deformed potential function is expressed as
\begin{equation}
W^\frak{b}(L; \textup{\textbf{y}}) = \sum_{(i,j)} \left( \exp \left( \frak{b}^\textup{hor}_{i,i+1} \right) \frac{y_{i,j}}{y_{i+1, j}}  T^{ u_{i,j} - u_{i+1, j}} + \exp \left( \frak{b}^\textup{ver}_{j+1,j} \right) \frac{y_{i, j+1}}{y_{i,j}}  T^{ u_{i,j+1} - u_{i,j}} \right).
\end{equation}
\end{corollary}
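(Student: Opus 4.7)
The plan is to substitute directly into the bulk-deformed potential formula~\eqref{bulkdeformedpotential}. The enumeration of classes of Maslov index two in $\pi_2(\mcal{O}_\lambda, L)$ is already in place: by property (3) of Section~\ref{reviewpotentialpotentialfunc}, these classes are precisely the $\beta^{i,j}_{i+1,j}$ and $\beta^{i,j+1}_{i,j}$, each carrying $n_\beta = 1$ (property (4)). Their symplectic areas $\omega(\beta)/2\pi$ and boundary classes $\partial\beta$ are exactly the data read off from the un-deformed formula~\eqref{potentialfunctionoriginal}, producing the monomials $T^{u_{i,j}-u_{i+1,j}}\,y_{i,j}/y_{i+1,j}$ and $T^{u_{i,j+1}-u_{i,j}}\,y_{i,j+1}/y_{i,j}$ respectively after the exponential substitution~\eqref{exponencoord}.

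Next, I would compute the bulk exponential factor $\exp\bigl(\sum_k(\beta\cap\scr{D}_k)\,\frak{b}_k\bigr)$ class by class. For the bulk parameter~\eqref{bulkparak}, Lemma~\ref{intersectionnumbercal} asserts that $\beta^{i,j}_{i+1,j}$ pairs nontrivially only with $\scr{D}^\textup{hor}_{i,i+1}$ (with intersection number one) and vanishes against every $\scr{D}^\textup{hor}_{k,k+1}$ for $k\neq i$ and against every vertical Schubert cycle. Hence the bulk factor for $\beta^{i,j}_{i+1,j}$ collapses to $\exp(\frak{b}^\textup{hor}_{i,i+1})$. Symmetrically, $\beta^{i,j+1}_{i,j}$ contributes $\exp(\frak{b}^\textup{ver}_{j+1,j})$. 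Summing the two families of contributions over all $(i,j)$ gives exactly the claimed expression for $W^\frak{b}(L;\mathbf{y})$.

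The only nontrivial point — which is not a genuine obstacle here because it is absorbed into Theorem~\ref{Foootoric2potenti} as adapted in Section~\ref{sec_bulkdefbyschucy} — is the validity of the bulk-deformed potential formula~\eqref{bulkdeformedpotential} in the Gelfand--Cetlin setting, where the cycles $\scr{D}^\textup{hor}_{i,i+1}$ and $\scr{D}^\textup{ver}_{j+1,j}$ are neither smooth nor $T^n$-invariant. Once that formula is granted, the present statement is a mechanical substitution of the intersection numbers from Lemma~\ref{intersectionnumbercal} into the general expression, and no further analysis is required.
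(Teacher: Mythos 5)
Your proposal is correct and follows exactly the route the paper intends: plug the intersection numbers from Lemma~\ref{intersectionnumbercal} into the general bulk-deformed potential formula~\eqref{bulkdeformedpotential}, using the Maslov-two classification and the area/boundary data already encoded in~\eqref{potentialfunctionoriginal}. The paper leaves this as a direct corollary for the same reason you identify — the only substantive issue is the validity of~\eqref{bulkdeformedpotential} for non-smooth, non-$T^n$-invariant cycles, which is handled separately in Section~\ref{sec_bulkdefbyschucy}.
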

Let
$$
 c^\textup{hor}_{i,i+1} := \exp (\frak{b}^\textup{hor}_{i, i+1}), \quad
 c^\textup{ver}_{j+1,j} := \exp (\frak{b}^\textup{ver}_{j+1, j}).
$$
By definition, they lie in $\Lambda_U$. With this notation,
the logarithmic derivative of the bulk-deformed potential becomes
\begin{equation}\label{thegradientofbulkdeformedpotential}
\begin{split}
y_{i,j} \frac{\pa W^{\frak{b}}}{\pa y_{i,j}}({\mathbf{y}}) &= - c^\textup{ver}_{j+1, j}  \frac{y_{i, j+1}}{y_{i, j}} T^{u_{i,j+1} - u_{i,j}} - c^\textup{hor}_{i-1, i} \frac{y_{i-1, j}}{y_{i,j}} T^{u_{i-1,j} - u_{i,j}}\\
&+ c^\textup{hor}_{i, i+1} \frac{y_{i,j}}{y_{i+1, j}} T^{u_{i,j} - u_{i+1,j}} + c^\textup{ver}_{j, j-1} \cdot \frac{y_{i, j}}{y_{i, j-1}} T^{u_{i,j} - u_{i,j-1}}
\end{split}
\end{equation}

%------------------------------------------------------------------------------------------
\subsection{Non-displaceable Gelfand--Cetlin fibers in $\mathrm{Fl}(3)$}\label{pfofmaincorforfullflag3}

In this section, the case of $\mathrm{Fl}(3)$ will be discussed in details. The following theorem will be proven
as a warmup towards the general theorem.

\begin{theorem}[Theorem~\ref{theoremC}]\label{theorem_maincompleteflag3}
Let $\lambda = ( \lambda_1 = 2 > \lambda_2 = 0 > \lambda_3 = -2 )$.
Consider the co-adjoint orbit $\mathcal{O}_\lambda$, a complete flag manifold $\mathrm{Fl}(3)$ equipped with the monotone Kirillov--Kostant--Souriau symplectic form $\omega_\lambda$, Then the Gelfand--Cetlin fiber over a point $\mathbf{u} \in \Delta_\lambda$ is non-displaceable if and only if $\mathbf{{u}} \in I$ where
\begin{equation}\label{linesegmentforf3}
I := \left\{ (u_{1,1}, u_{1,2}, u_{2,1}) = (0, 1 -t, -1 +t) \in \R^3 ~|~ 0 \leq t \leq 1 \right\}
\end{equation}
In particular, the Lagrangian 3-sphere $\Phi_\lambda^{-1}(0,0,0)$ is non-displaceable.
\end{theorem}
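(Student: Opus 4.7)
The displaceability direction of the theorem is Pabiniak's Theorem~\ref{displaceablefl3}(2); only the non-displaceability direction remains. The plan is to handle the torus range $0 \leq t < 1$ by producing critical points of the bulk-deformed potential in $\Lambda_U$ and invoking Theorem~\ref{foootoric2nondispl}, and then to deduce non-displaceability of the Lagrangian three-sphere $L_2(1) = \Phi_\lambda^{-1}(0,0,0)$ via a Hausdorff limit argument.

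Fix $0 \leq t < 1$. By Corollary~\ref{formulaforbulkdeformedpotentialoursitu} and~\eqref{thegradientofbulkdeformedpotential}, the three critical-point equations at $\mathbf{u} = I_2(t) = (0, 1-t, -1+t)$, after setting the bulk coefficients on the leading-order facets to $c^{\textup{hor}}_{1,2} = c^{\textup{ver}}_{2,1} = 1$, read
\begin{equation*}
y_{1,1}^2 = y_{1,2}\, y_{2,1}, \qquad y_{1,2}^2\, (y_{1,1}+1)/y_{1,1} = T^{2t}\, c^{\textup{ver}}_{3,2}, \qquad y_{1,1}+1 = T^{2t}\, c^{\textup{hor}}_{2,3}\, y_{2,1}^2,
\end{equation*}
with the remaining bulk parameters $c^{\textup{hor}}_{2,3}, c^{\textup{ver}}_{3,2} \in \Lambda_U$ still free. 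Eliminating $y_{1,2}$ via the first equation and then $y_{2,1}^2$ via the third collapses the system to the single cubic
\begin{equation*}
y_{1,1}^3 = c^{\textup{ver}}_{3,2} \, / \, c^{\textup{hor}}_{2,3},
\end{equation*}
with $y_{2,1}^2$ and $y_{1,2}$ subsequently determined by the third and first relations.

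The key step is to choose the bulk so that this cubic has a solution $y_{1,1} \in \Lambda_U$ whose deviation $y_{1,1}+1$ has $T$-valuation exactly $2t$, so that dividing by $T^{2t}$ in the third equation lands back in $\Lambda_U$. The concrete choice $c^{\textup{hor}}_{2,3} = 1$ and $c^{\textup{ver}}_{3,2} = -1 + T^{2t}$, both units in $\Lambda_U$, realizes this: the equation $y_{1,1}^3 = -1 + T^{2t}$ has the unit root $y_{1,1} = -1 + \tfrac{1}{3} T^{2t} + O(T^{4t}) \in \Lambda_U$, whence $y_{1,1}+1$ has valuation precisely $2t$, $y_{2,1}^2 = \tfrac{1}{3} + O(T^{2t}) \in \Lambda_U$ admits a square root in $\Lambda_U$ because $\C$ is algebraically closed, and finally $y_{1,2} = y_{1,1}^2 / y_{2,1} \in \Lambda_U$. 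Theorem~\ref{foootoric2nondispl} then shows that $L_2(t)$ is non-displaceable for every $0 \leq t < 1$.

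For the endpoint $t = 1$, the Lagrangian three-sphere is the Hausdorff limit of the tori $L_2(t)$ as $t \nearrow 1$ in the compact co-adjoint orbit. Any Hamiltonian diffeomorphism displacing $\Phi_\lambda^{-1}(0,0,0)$ would, by uniform continuity of its time-one map, also displace $L_2(t)$ for all $t$ sufficiently close to $1$, contradicting the preceding step; hence the Lagrangian three-sphere is non-displaceable as well. The main obstacle in this plan is the bulk selection in the torus case: the $T^{2t}$-dependent subleading correction in $c^{\textup{ver}}_{3,2}$ is precisely the device that forces $y_{1,1}+1$ into the correct $T$-valuation stratum, the essential non-Archimedean ingredient anticipated in the introduction.
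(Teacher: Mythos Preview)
Your argument is essentially correct and takes a route genuinely different from the paper's. You keep $c^\textup{ver}_{2,1}=c^\textup{hor}_{1,2}=1$, eliminate to the single cubic $y_{1,1}^3=c^\textup{ver}_{3,2}/c^\textup{hor}_{2,3}$, and then choose $c^\textup{ver}_{3,2}=-1+T^{2t}$, $c^\textup{hor}_{2,3}=1$ so that the cube root $y_{1,1}\equiv -1$ forces $\frak v_T(y_{1,1}+1)=2t$. The paper instead perturbs the \emph{leading}-order bulk coefficient, setting $c^\textup{ver}_{2,1}=1+T^{2t}$, fixes $y_{1,2}=y_{2,1}=1$, and solves the quadratic $y_{1,1}^2=1+T^{2t}$; the remaining two bulk parameters $c^\textup{ver}_{3,2},c^\textup{hor}_{2,3}$ are then determined implicitly to kill the residual $T^{1+t}$-terms. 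Your reduction to a single cubic is algebraically cleaner for $\mathrm{Fl}(3)$, while the paper's ``tentative bulk, then adjust'' scheme is the template that scales to the split-leading-term machinery of Sections~\ref{section:decompositionofpote}--\ref{solvabilityofthesplitleadingtermequ}. Both proofs conclude via the Hausdorff-limit argument (Proposition~\ref{closednessofnondisp}) for $t=1$.

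One small gap: your specific choice $c^\textup{ver}_{3,2}=-1+T^{2t}$ collapses to $0\notin\Lambda_U$ at $t=0$, so the stated bulk does not exist there. This is harmless---at the center $t=0$ no bulk is needed (e.g.\ $y_{1,1}=1$, $y_{2,1}=\sqrt 2$, $y_{1,2}=1/\sqrt 2$ solves the undeformed critical-point equations, recovering the Nishinou--Nohara--Ueda result)---but you should either restrict your bulk argument to $0<t<1$ and cite that case separately, or replace $-1+T^{2t}$ by, say, $-1+aT^{2t}$ with any $a\in\C^*$ and note that at $t=0$ one simply takes $a$ so that $-1+a\neq 0$.
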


To deform Floer theory, we employ a combination of the vertical and horizontal divisors in~\eqref{eq:D-ver-hor}
any of which do \emph{not} intersect the torus fibers. Let
\begin{equation}\label{eq_combverhor}
\frak{b} = \frak{b}^\textup{ver}_{2,1} \cdot \scr{D}^\textup{ver}_{2,1} + \frak{b}^\textup{hor}_{1,2} \cdot \scr{D}^\textup{hor}_{1,2} + \frak{b}^\textup{ver}_{3,2} \cdot \scr{D}^\textup{ver}_{3,2} + \frak{b}^\textup{hor}_{2,3} \cdot \scr{D}^\textup{hor}_{2,3}.
\end{equation}
For the proof of Theorem~\ref{theorem_maincompleteflag3}, we need the following topological fact.

\begin{proposition}\label{closednessofnondisp}
Let $\Phi \colon X \to \Delta \subset \R^d$ be a completely integrable system such that $\Phi$ is proper. If there exists a sequence $\{ \mathbf{u}_i \mid i \in \N \}$ such that
\begin{enumerate}
\item Each $\Phi^{-1}(\mathbf{u}_i)$ is non-displaceable.
\item The sequence $\mathbf{u}_i$ converges to some point $\mathbf{u}_\infty$ in $\Delta$.
\end{enumerate}
then $\Phi^{-1}(\mathbf{u}_\infty)$ is also non-displaceable.
\end{proposition}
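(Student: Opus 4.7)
The plan is to argue by contradiction, using the properness of $\Phi$ to convert non-displaceability at each $\mathbf{u}_i$ into non-displaceability at the limit $\mathbf{u}_\infty$. Suppose on the contrary that $L_\infty := \Phi^{-1}(\mathbf{u}_\infty)$ is displaceable. Then there is a Hamiltonian diffeomorphism $\psi$ of $X$ with $\psi(L_\infty) \cap L_\infty = \emptyset$. Since $\Phi$ is proper, $L_\infty$ is compact, and therefore $\psi(L_\infty)$ is also compact; being disjoint, the two compact sets can be separated by open neighborhoods $V \supset L_\infty$ and $V' \supset \psi(L_\infty)$ with $V \cap V' = \emptyset$. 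After shrinking, we may further assume $\psi(V) \subset V'$, by replacing $V$ with $V \cap \psi^{-1}(V')$, which is still an open neighborhood of $L_\infty$.

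The key step is the following continuity claim: for any open neighborhood $V$ of $L_\infty$, there exists $\epsilon > 0$ such that $\Phi^{-1}(B_\epsilon(\mathbf{u}_\infty)) \subset V$. This is a standard consequence of properness. Indeed, if it fails, one extracts points $x_n \in \Phi^{-1}(B_{1/n}(\mathbf{u}_\infty)) \setminus V$; these all lie in the compact set $\Phi^{-1}(\overline{B_1(\mathbf{u}_\infty)})$, so after passing to a subsequence $x_n \to x_\infty$ with $\Phi(x_\infty) = \mathbf{u}_\infty$, forcing $x_\infty \in L_\infty \subset V$, contradicting $x_n \notin V$ and $V$ open.

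Applying this to the neighborhood $V$ constructed above, and using $\mathbf{u}_i \to \mathbf{u}_\infty$, we conclude that $L_i := \Phi^{-1}(\mathbf{u}_i) \subset V$ for all sufficiently large $i$. Then $\psi(L_i) \subset \psi(V) \subset V'$, and since $V \cap V' = \emptyset$ we obtain $\psi(L_i) \cap L_i = \emptyset$. This contradicts the assumed non-displaceability of $L_i$, completing the proof.

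There is no substantive obstacle; the argument is a purely point-set topological compactness argument, and the only subtlety is the proper use of properness of $\Phi$ to upgrade convergence in the base $\Delta$ to Hausdorff-type convergence of fibers in $X$. Note in particular that no smoothness or symplectic hypothesis on the limit fiber is needed, which is exactly what makes this proposition useful for the non-torus Gelfand--Cetlin fibers arising as Hausdorff limits of the tori $L_m(t)$.
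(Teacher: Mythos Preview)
Your proof is correct and follows essentially the same approach as the paper: a contradiction argument where properness of $\Phi$ is used (via a convergent-subsequence extraction) to show that eventually the fibers $\Phi^{-1}(\mathbf{u}_i)$ lie inside a displaceable neighborhood of $\Phi^{-1}(\mathbf{u}_\infty)$. The only cosmetic difference is that you isolate the ``tube lemma'' step (your continuity claim) as a separate statement, whereas the paper runs the same subsequence argument directly on points $x_i \in \Phi^{-1}(\mathbf{u}_i) \setminus U$.
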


\begin{proof}
For a contradiction, suppose that $\Phi^{-1}(\mathbf{u}_\infty)$ is displaceable. There is a Hamiltonian diffeomorphism $\phi$ and an open set $U$ containing $\Phi^{-1}(\mathbf{u}_\infty)$ in $X$ such that $\phi(U) \cap U = \emptyset$.
For each $i$, there exists a point $x_i \in \Phi^{-1}(\mathbf{u}_i)$ such that $x_i \notin U$ since $\Phi^{-1}(\mathbf{u}_i)$ is non-displaceable. It implies that any subsequence of $\{x_i\}$ cannot converge to a point in $U$. On the other hand, passing to a subsequence, we may assume that $x_i$ converges to $x_\infty$ for some $x_\infty \in X$ since $\Phi$ is proper. By the continuity of $\Phi$, we then have
$$
\mathbf{u}_\infty = \lim_{i \to \infty} \mathbf{u}_i = \lim_{i \to \infty} \Phi (x_i) = \Phi (x_\infty).
$$
It leads to a contradiction that $x_\infty \in \Phi^{-1} (\mathbf{u}_\infty) \subset U$.
\end{proof}

We now start the proof of Theorem~\ref{theorem_maincompleteflag3}.

\begin{proof}[Proof of Theorem~\ref{theorem_maincompleteflag3}]

For any fixed $t$ with $0 \leq t < 1$, let $L(t)$ be the Lagrangian torus fiber over $(0, 1-t, -1+t) \in I$ in~\eqref{linesegmentforf3}. Let $L_s(t)$ be the fiber corresponding to $L(t)$ in $X_s$ via a toric degeneration of completely integrable systems in Theorem~\ref{NNUtoricdeg}. By taking $s > 0$ sufficiently close to $0$, the potential function of $L_s(t)$ can be arranged as
$$
W(\mathbf{y}) = \left( \frac{y_{1,2}}{y_{1,1}} + \frac{y_{1,1}}{y_{2,1}} + y_{1,2} + \frac{1}{y_{2,1}} \right) T^{1-t} + \left( \frac{1}{y_{1,2}} + y_{2,1} \right) T^{1+t}.
$$
We use a combination of Schubert cycles in~\eqref{bulkparak} (or equivalently~\eqref{eq_combverhor}) to deform the potential function. A strategy we take is to postpone determining bulk-deformation parameters. Namely, we start with a tentative parameter, determine solutions for $\mathbf{y}$ first, and then adjust the parameter to make the chosen $\mathbf{y}$ a critical point.

Take a \emph{tentative} bulk-parameter $\frak{b}^\prime := \frak{b}^\textup{ver}_{2,1} \cdot \scr{D}^\textup{ver}_{2,1}$ such that $\exp (\frak{b}^\textup{ver}_{2,1}) = 1 + T^{2t}$, i.e.,
$$
\frak{b}^\textup{ver}_{2,1} = T^{2t} - \frac{1}{2} T^{4t} + \cdots \in \Lambda_+.
$$
By Corollary~\ref{formulaforbulkdeformedpotentialoursitu}, the potential function is deformed into
$$
W^{\frak{b}^\prime}(\mathbf{y}) = \left( \frac{y_{1,2}}{y_{1,1}} + \frac{y_{1,1}}{y_{2,1}} + y_{1,2} + \frac{1}{y_{2,1}} \right) T^{1-t} + \left( \frac{y_{1,2}}{y_{1,1}} + \frac{1}{y_{2,1}} + \frac{1}{y_{1,2}} + y_{2,1} \right) T^{1+t},
$$
whose logarithmic derivatives are
$$
\begin{cases}
\displaystyle y_{1,1} \frac{\pa W^{\frak{b}^\prime}}{\pa y_{1,1}} (\mathbf{y}) =  \left( - \frac{y_{1,2}}{y_{1,1}} + \frac{y_{1,1}}{y_{2,1}} \right) T^{1-t} + \left( - \frac{y_{1,2}}{y_{1,1}} \right) T^{1+t} \\
\displaystyle y_{1,2} \frac{\pa W^{\frak{b}^\prime}}{\pa y_{1,2}} (\mathbf{y}) =  \left(  \frac{y_{1,2}}{y_{1,1}} + {y_{1,2}} \right) T^{1-t} + \left( \frac{y_{1,2}}{y_{1,1}} - \frac{1}{y_{1,2}} \right) T^{1+t} \\
\displaystyle y_{2,1} \frac{\pa W^{\frak{b}^\prime}}{\pa y_{2,1}} (\mathbf{y}) =  \left( - \frac{y_{1,1}}{y_{2,1}} - \frac{1}{y_{2,1}} \right) T^{1-t} + \left( - \frac{1}{y_{2,1}} + y_{2,1} \right) T^{1+t}.
\end{cases}
$$

We set $y_{1,2} = 1, y_{2,1} = 1$ and take $y_{1,1}$ as the solution of $(y_{1,1})^2 = 1 + T^{2t}$ satisfying $y_{1,1} \equiv -1 \mod T^{>0}$. It is easy to see that $y_{1,1} \frac{\pa W^{\frak{b}^\prime}}{\pa y_{1,1}} (\mathbf{y}) = 0$. Note that $y_{1,1}$ is of the form
$
y_{1,1} \equiv - 1 - \frac{1}{2} T^{2t} \mod T^{>2t}.
$

We now adjust a bulk-deformation parameter from $\frak{b}^\prime$ to $\frak{b}$ in order for the chosen $(y_{1,1}, y_{1,2}, y_{2,1})$ to be a critical point of $W^\frak{b}$. Let
$$
\frak{b} := \frak{b}^\prime + \frak{b}^\textup{ver}_{3,2} \cdot \scr{D}^\textup{ver}_{3,2} + \frak{b}^\textup{hor}_{2,3} \cdot \scr{D}^\textup{hor}_{2,3}.
$$
Since $\scr{D}^\textup{ver}_{3,2}$ and $\scr{D}^\textup{hor}_{2,3}$ do not intersect the discs of Maslov index $2$ emanating from $\scr{D}^\textup{ver}_{2,1}$ and $\scr{D}^\textup{hor}_{1,2}$  in $\pi_2(\mcal{O}_\lambda, \Phi^{-1}_\lambda(t))$, we have
$$
y_{1,1} \frac{\pa W^{\frak{b}}}{\pa y_{1,1}} (\mathbf{y}) = y_{1,1} \frac{\pa W^{\frak{b}^\prime}}{\pa y_{1,1}} (\mathbf{y}).
$$
Plugging the chosen $y_{i,j}$'s, we have
$$
\begin{cases}
\displaystyle y_{1,2} \frac{\pa W^{\frak{b}}}{\pa y_{1,2}} (\mathbf{y}) = \left( -\frac{1}{2} - \exp (\frak{b}^\textup{ver}_{3,2}) \right) T^{1+t} + h^{(1,2)} \cdot T^{1+t}  \\
\displaystyle y_{2,1} \frac{\pa W^{\frak{b}}}{\pa y_{2,1}} (\mathbf{y}) = \left( -\frac{1}{2} + \exp (\frak{b}^\textup{hor}_{2,3}) \right) T^{1+t} + h^{(2,1)} \cdot T^{1+t}.
\end{cases}
$$
for some constant $h^{(1,2)}, h^{(2,1)}\in \Lambda_+$. By choosing $\frak{b}^\textup{ver}_{3,2}, \frak{b}^\textup{hor}_{2,3} \in \Lambda_0$ such  that $\exp (\frak{b}^\textup{ver}_{3,2}) = -\frac{1}{2} + h^{(1,2)}$ and $\exp (\frak{b}^\textup{hor}_{2,3}) = \frac{1}{2} - h^{(2,1)}$,
we can make $W^\frak{b}(\mathbf{y})$ admit a critical point. By Theorem~\ref{criticalpointimpliesnondisplaceability}, $L_s(t)$ has a non-vanishing (bulk-)deformed Floer cohomology. By the Hamiltonian invariance of $A_\infty$-structures, so does $L(t)$ and therefore it is non-displaceable.

In sum, each torus fiber over the line segment in~\eqref{linesegmentforf3} is non-displaceable. Furthermore, Proposition~\ref{closednessofnondisp} yields non-displaceability of the Lagrangain $3$-sphere.
\end{proof}

%------------------------------------------------------------------------------------------
\section{Decompositions of the gradient of potential function}
\label{secDecompositionsOfTheGradientOfPotentialFunction}
\label{section:decompositionofpote}

In this section, in order to prove Theorem~\ref{theoremD}, we introduce \emph{the split leading term equation} of the potential function in~\eqref{potential}, which is the analogue of \emph{the leading term equation} in~\cite{FOOOToric1, FOOOToric2}. We discuss the relation between its solvability and non-triviality of Floer cohomology under a certain choice of a bulk-deformation.

%------------------------------------------------------------------------------------------
\subsection{Outline of Section~\ref{section:decompositionofpote} and~\ref{solvabilityofthesplitleadingtermequ}}

Thanks to Theorem~\ref{criticalpointimpliesnondisplaceability}, the proof of
Theorem \ref{theoremD}
boils down to finding a bulk-deformation parameter $\frak{b}$ such that the bulk-deformed
potential function $W^\frak{b}$ admits a critical point. Section~\ref{section:decompositionofpote}
and~\ref{solvabilityofthesplitleadingtermequ} will be occupied to discuss how to determine them.

Before giving the outline, we start with explaining why this process is non-trivial by pointing out
the differences from the toric case. In the toric case, the (generalized) leading term equation was
introduced to detect non-displaceable toric fibers effectively in~\cite[Section 11]{FOOOToric2}.
Roughly speaking, it consists of the initial terms of the gradient of a (bulk-deformed) potential
function with respect to a suitable choice of exponential variables. It is proven therein that there
always exists a bulk parameter $\frak{b}$ so that the complex solution becomes a critical point of
$W^\frak{b}$ if the leading term equation admits a solution whose components are non-zero. Indeed,
the locations where the leading term equation is solvable are characterized by the intersection of
 certain tropicalizations in \cite{KLS}. The key features for proving the above statements are as follows.
 First, there is a one-to-one correspondence between the \emph{honest} holomorphic discs bounded by a
 torus fiber of Maslov index $2$ and the facets of the moment polytope. Second, the pre-image of each
 facet represents a cycle of degree $2$. Thus, all terms corresponding to the facets can be independently controlled.

In the GC system, the inverse image of a single facet is \emph{not} a cycle in general because of
the appearance of non-torus fibers at the boundary.
We need to take into account a particular union of facets to represent a cycle of degree $2$.
The terms of $W$ \emph{cannot} be independently controlled if taking such a cycle for a bulk-deformation.
Nonetheless, for the family of Lagrangian tori $L_m(t)$ over~\eqref{IMT} in $\mcal{O}_\lambda \simeq \mathrm{Fl}(n)$
with the monotone symplectic form $\omega_\lambda$, we shall show the existence of a
bulk-deformation parameter $\frak{b}$ and a critical point associated to $\frak{b}$.

In Section~\ref{section:decompositionofpote}, we introduce the \emph{split leading term equation}, see Definition~\ref{splittingleadingtermequa}. We then demonstrate how to determine a bulk-deformation
parameter and promote a solution of the split leading term equation to a critical point of the bulk-deformed
potential function once a solution of the split leading term equation is given.
In Section~\ref{solvabilityofthesplitleadingtermequ}, we show that the split leading term
equation always admits a solution. 
One might think the situation looks somewhat similar to that of Bernstein-Kushinirenko
\cite{Be,Ku} which, however, treats the \emph{generic} system of Laurent polynomial equations while
the system in our situation does not have much freedom. For a given fixed system of multi-variable equations
finding a solution is not simple at all even with the aid of a computer.
Yet, guided by the ladder diagrams decorated by the exponential variables and using the freedom
in the choice of \emph{seeds} (see Section \ref{subsec:seeds}) which provides some freedom of changing
the coefficients of the equations,  we are able to show the existence of a solution for the system
of our current interest.

A brief description of our procedure of solving the critical point equation is now in order.
Let $m$ be any fixed integer with $2 \leq m \leq \lfloor n/2 \rfloor$ and
$B(m)$ be the sub-diagram consisting of $(m \times m)$ lower-left unit boxes in the ladder
diagram $\Gamma(n)$ of $\mathrm{Fl}(n)$. The diagrams $\Gamma(n)$ and $B(m)$ are often regarded
as collections of double indices as follows:
\begin{equation}\label{gammanbm}
\begin{split}
&\Gamma(n) = \{ (i, j) \mid 2 \leq i + j \leq n\}\\
&B(m) = \{ (i, j) \mid 1 \leq i, j \leq m\}.
\end{split}
\end{equation}
Recalling~\eqref{potential}, the potential function of $L_m(t)$ is rearranged into several groups
in terms of the energy filtration. The valuation of $\pa_{(i,j)}  W(\mathbf{y})$ for $(i, j) \in B(m)$ is $(1-t)$ and that of $\pa_{(i,j)}  W(\mathbf{y})$ for $(i, j) \in \Gamma(n) \backslash B(m)$ is $1$.

Decomposing the gradient of the potential function deformed by $\frak{b}$ in~\eqref{bulkparak} into two pieces along the boundary of $B(m)$, we determine a critical point in the following steps.
\begin{enumerate}
\item Find a solution $(y^\C_{i,j} \in \C^*)$ of the system of the equations $\pa_{(i,j)}  W^\frak{b}(\mathbf{y}) \equiv 0 \mod T^{>1}$ in $(\Gamma(n) \backslash  B(m)) \cup \{(m,m)\}$ and equations relating the variables adjacent to $B(m)$ in Section~\ref{solvabilityofthesplitleadingtermequ}.
\item Find a solution $(y^\C_{i,j} \in \C^*)$ of $\pa_{(i,j)}  W^\frak{b}(\mathbf{y}) \equiv 0 \mod T^{>1-t}$ in $B(m)$ in Section~\ref{symmetriccomplexsolinbm}.
\item Find a solution $(y_{i,j} \in \Lambda_U)$ of $\pa_{(i,j)}  W^\frak{b}(\mathbf{y}) = 0$ in $B(m)$  such that $y_{i,j} \equiv y_{i,j}^\C \mod T^{>0}$ in Section~\ref{insidebm}.
\item Find a solution $(y_{i,j} \in \Lambda_U)$ of $\pa_{(i,j)}  W^\frak{b}(\mathbf{y}) = 0$ in $\Gamma(n) \backslash  B(m)$ such that $y_{i,j} \equiv y_{i,j}^\C \mod T^{>0}$ in Section~\ref{outsideofbm}.
\end{enumerate}
The \emph{split leading term equation} (See Definition~\ref{splittingleadingtermequa}) arises in the first step $(1)$.

\begin{example}
In the co-adjoint orbit $\mcal{O}_\lambda$ where $\lambda = ( 5, 3, 1, -1, -3, -5 )$, the potential function of $L_2(t)$ is
$$
W(L_2(t); \mathbf{y}) = \left( \frac{y_{1, 2}}{y_{1,1}}+ \frac{y_{1, 1}}{y_{2,1}} + \frac{y_{1, 2}}{y_{2,2}} + \frac{y_{2, 2}}{y_{2,1}} \right) T^{1-t} + \left(  \frac{y_{1, 4}}{y_{1,3}} + \frac{y_{1, 3}}{y_{2,3}} + \cdots \right) T^{1} + \left( \frac{y_{1, 3}}{y_{1,2}} + \frac{y_{2, 1}}{y_{3,1}} \right) T^{1 + t}.
$$
In this example, the valuation of partial derivatives of $W$ jumps along the red line in Figure~\ref{Decomposition of the gradient of the potential function}.

Turning on bulk-deformation, it follows from ~\eqref{thegradientofbulkdeformedpotential} that
any solution complex number $y_{i,j} \in \C^*$ of the split leading term equation has to satisfy
the system
\begin{equation}\label{firstsysfrominb2}
\begin{cases}  - c^\textup{ver}_{2,1} \cdot \frac{y_{1, 2}}{y_{1,1}}+ c^\textup{hor}_{1,2} \cdot \frac{y_{1, 1}}{y_{2,1}} = 0, \,\, c^\textup{ver}_{2,1} \cdot \frac{y_{1, 2}}{y_{1,1}} + c^\textup{hor}_{1,2} \cdot \frac{y_{1, 2}}{y_{2,2}} = 0, \\
 - c^\textup{hor}_{1,2} \cdot \frac{y_{1, 1}}{y_{2,1}} - c^\textup{ver}_{2,1} \cdot \frac{y_{2, 2}}{y_{2,1}} = 0, \,\, - c^\textup{hor}_{1,2} \cdot \frac{y_{1, 2}}{y_{2,2}} + c^\textup{ver}_{2,1} \cdot \frac{y_{2, 2}}{y_{2,1}} = 0,
\end{cases}
\end{equation}
and
\begin{equation}\label{secondsysfromoutb22}
\begin{cases}
- c_{4 - i + 1, 4-i}^\textup{ver} \frac{y_{i,4 - i + 1}}{y_{i,4 - i}} + c_{i,i+1}^\textup{hor}  \frac{y_{i, 5 - i}}{y_{i + 1, 5- i}} = 0 &\mbox{ for $i = 1, \dots, 3$.} \\
- c_{5 - i + 1, 5-i}^\textup{ver}  \frac{y_{i,5 - i + 1}}{y_{i,5 - i}} - c_{i-1, i}^\textup{hor}  \frac{y_{i-1,5-i}}{y_{i,5-i}} 
+ c_{5 - i, 5-i-1}^\textup{ver}  \frac{y_{i,5 - i}}{y_{i,5 - i - 1}}
+ c_{i,i+1}^\textup{hor} \cdot \frac{y_{i, 4 - i}}{y_{i + 1, 4- i}} = 0 &\mbox{ for $i = 1, \dots, 4$.} \\
- c_{6 - i + 1, 6-i}^\textup{ver}  \frac{1}{y_{i,6 - i}} - c_{i-1, i}^\textup{hor}  \frac{y_{i-1,6-i}}{y_{i,6-i}} 
+ c_{6 - i, 6-i-1}^\textup{ver}  \frac{y_{i,6 - i}}{y_{i,6 - i - 1}}
+ c_{i,i+1}^\textup{hor} {y_{i, 5 - i}} = 0 &\mbox{ for $i = 1, \dots, 5$.} 
\end{cases}
\end{equation}
The equations~\eqref{firstsysfrominb2} and~\eqref{secondsysfromoutb22} come from the leading parts of the partial derivatives inside $B(2)$ and of the partial derivatives inside $\Gamma(6) \backslash B(2) \cup \{(2,2)\}$, respectively. Solving ~\eqref{firstsysfrominb2} corresponds to the step $(2)$ and solving ~\eqref{secondsysfromoutb22} corresponds to the step $(1)$. In the rest of this section, assuming that the split leading term equation is solvable,
we explain how to complete the remaining steps.

\begin{figure}[ht]
	\scalebox{0.85}{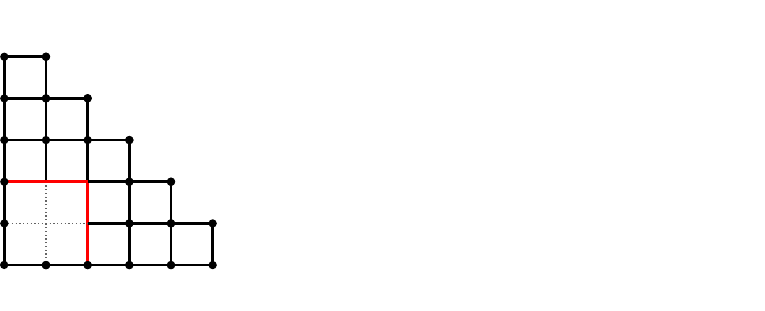}
	\caption{\label{Decomposition of the gradient of the potential function} Decomposition of the gradient of the potential function in $\mathrm{Fl}(6)$.}	
\end{figure}
\end{example}

%------------------------------------------------------------------------------------------
\subsection{Split leading term equations}

Let $\lambda = ( \lambda_{i} := n - 2i + 1 \mid i = 1, \cdots, n )$ be an $n$-tuple of real numbers for an arbitrary integer $n \geq 4$. Consider the co-adjoint orbit $\mathcal{O}_\lambda$, a complete flag manifold $\mathrm{Fl}(n)$ equipped with the monotone form $\omega_\lambda$. Fix the one parameter family of Lagrangian GC tori $L_m(t)$ over $I_m(t)$ for $0 \leq t < 1$ in $\mathcal{O}_\lambda$ as in~\eqref{IMT}.

We define the split leading term equation arising from the potential function of $L_m(t)$.
Let
$$
k := \left\lceil  n/2 \right\rceil,
$$
that is $n = 2k -1$ or $2k$.
We denote
\begin{equation}\label{eq:fbij}
f^\frak{b}_{(i, j)} ({\mathbf{y}}) := \partial_{(i,j)} W({\mathbf{y}}) \cdot T^{- \nu}
\quad \mbox{where $\displaystyle \nu := \frak{v} \left(\partial_{(i,j)} W({\mathbf{y}})\right)$}.
\end{equation}
We also denote
$f^{\vphantom{\frak{b}}}_{(i,j)} := f^\frak{b}_{(i,j)}$ when $\frak{b} = 0$.
For our purpose, it will suffice to take a bulk-deformation parameter of the type$\colon$
\begin{equation}\label{bulkparak2}
\frak{b} :=  \sum_{i \geq k} \frak{b}^\textup{hor}_{i, i+1} \cdot \scr{D}^\textup{hor}_{i, i+1}
 + \sum_{j \geq k} \frak{b}^\textup{ver}_{j+1, j} \cdot \scr{D}^\textup{ver}_{j+1, j}.
\end{equation}
Note that horizontal (resp. vertical) facets in~\eqref{bulkparak2} are supported outside of the maximal diagonal (of $(k \times k)$-size) embedded in $\Gamma(n)$.
Therefore we have
$$
\begin{cases}
c^\textup{hor}_{i, i+1} := \exp \left(\frak{b}^\text{hor}_{i,i+1} = 0 \right) = 1 \quad \mbox{for } i < k, \\
c^\textup{ver}_{j+1, j} := \exp \left(\frak{b}^\text{ver}_{j+1,j}= 0 \right) = 1  \quad \mbox{for } j < k.
\end{cases}
$$
\begin{definition}\label{splittingleadingtermequa}
Let $2 \leq m \leq k$ be given.
The \emph{split leading term equation} (abbreviated as \emph{SLT-equation}) \emph{of}
$\Gamma(n)$ \emph{associated with} $B(m)$ is the system
of the \emph{$\C$-valued} equations given by
\begin{equation}\label{splitleadingtermequ}
\begin{cases}
\ell^{\frak{b}, m}_{(i, j)} ({\mathbf{y}}) = 0 \quad &\mbox{for all $(i,j) \in \Gamma(n)$} \\
\ell^m_{(l)} ({\mathbf{y}}) = 0 \quad &\mbox{for all $l$ with $1 \leq l \leq m$}
\end{cases}
\end{equation}
where we define
\begin{eqnarray}
\ell^{\frak{b}, m}_{(i, j)}({\bf y})
&: = &- c^\textup{ver}_{j+1, j} \cdot \frac{y_{i, j+1}}{y_{i, j}}
- c^\textup{hor}_{i-1, i} \cdot \frac{y_{i-1, j}}{y_{i,j}} + c^\textup{hor}_{i, i+1} \cdot \frac{y_{i,j}}{y_{i+1, j}}
+ c^\textup{ver}_{j, j-1} \cdot \frac{y_{i, j}}{y_{i, j-1}},
\label{pabmijy}\\
\ell^m_{(l)} ({\mathbf{y}}) &:= &(-1)^{m+1-l} \cdot \frac{y_{l,m+1}}{y_{m,m}} + \frac{y_{m,m}}{y_{m+1,l}}.\label{pamly}
\end{eqnarray}
Here we split the system into two, one on $B(m)$ and the other outside $B(m)$, with the following
constraints:
\begin{enumerate}
\item For $\ell^{\frak{b}, m}_{(i, j)} ({\mathbf{y}}) = 0$ with $(i,j ) \in B(m)$, we set
\begin{equation}\label{settingupwithin}
\begin{cases}
y_{m,s} := 0 &\mbox{ for $s > m$} \\
y_{r,m} := \infty &\mbox{ for $r > m$}.
\end{cases}
\end{equation}
\item  For $\ell^{\frak{b}, m}_{(i, j)} ({\mathbf{y}}) = 0$ with $(i,j ) \in \Gamma(n) \backslash B(m)$, we set
\begin{equation}\label{settingup}
\begin{cases}
&y_{r,m} := \infty \quad \mbox{for } r < m, \quad y_{m, s} := 0 \quad \mbox{for } s < m\\
&y_{\bullet,0} := \infty, \,\, y_{0, \bullet} := 0, \,\, y_{s, n+ 1 - s} := 1 \quad \mbox{for } 1 \leq s \leq n \\
\end{cases}
\end{equation}
\end{enumerate}
\end{definition}

We explain the implication of the above constraints in writing the SLT-equation of $\Gamma(n)$ associated with $B(m)$.
Cutting the boxes $B(m) \backslash \{ (m,m) \}$ off from the diagram $\Gamma(n)$, we have decomposed
blocks as in Figure~\ref{Decomposition of the gradient of the potential function}.

For an index $(i, j) \in B(m)$, ignoring the terms containing the variables not in $B(m)$, $\ell^{\frak b, m}_{(i,j)} (\mathbf{y})$ consists of the initial terms of the logarithmic derivative of $W^\frak{b}$.
Also, for an index $(i, j) \in \Gamma(n) \backslash B(m)$, ignoring the terms containing the variables within $B(m) \backslash \{ (m,m) \}$, $\ell^{\frak b, m}_{(i,j)} (\mathbf{y})$ consists of the initial terms of the logarithmic derivative of $W^\frak{b}$ whose formula is in~\eqref{thegradientofbulkdeformedpotential}.
In addition to them, we solve the equation $\ell^m_{(l)} ({\mathbf{y}}) = 0$ for the diagonal variables
$y_{s,n+1-s}$ adjacent to $B(m)$. It will be explained in ~\eqref{yjm+1} why the latter equation occurs.

Now introduce the following linear orders on $\Gamma(n)$.
\begin{definition}
We say $(i,j) \prec_{\mathrm{hor}} (i',j')$ on $\Gamma(n)$ if one of the following alternatives holds
\begin{enumerate}
\item $i+j < i' + j'$,
\item $i+j = i' + j'$ and $i < i'$.
\end{enumerate}
We also similarly define $\prec_{\mathrm{ver}}$ by replacing $i <i'$ by $j < j'$ in (2) above.
\end{definition}

Existence of a solution for the SLT-equation~\eqref{splitleadingtermequ} will guarantee that
the assumption of the following lemma holds. We will repeatedly employ it in order to promote a
solution in $\C^*$ to that in $\Lambda_U$ for the critical point equation of the (bulk-deformed)
potential function.
\begin{lemma}\label{extensionlemma}
Suppose we are given $y_{i-1, j} \in \Lambda_U \cup \{ 0 \}$, $y_{i,j-1} \in \Lambda_U \cup \{ \infty\}$ and $y_{i, j}, y_{i,j+1}$. If there is a \emph{non-zero}
complex solution $y^\C_{i+1, j}$ of $\ell^{\frak{b},m}_{(i, j)} ({\mathbf{y}}) = 0 \mod T^{>0}$ for some $c^\textup{ver}_{j+1,j}, c^\textup{hor}_{i-1,i}, c^\textup{hor}_{i,i+1}$ and $c^\textup{ver}_{j,j-1}$ in $\Lambda_U$,
then $\ell^{\frak{b},m}_{(i, j)}({\mathbf{y}}) = 0$ has a unique solution
$y_{i+1,j} \in \Lambda_U$ such that $y_{i+1,j} \equiv y_{i+1,j}^\C \mod T^{>0}$.
The same holds by changing the role of $y_{i+1, j}$ and $y_{i,j+1}$.
\end{lemma}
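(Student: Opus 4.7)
The plan is to solve the equation $\ell^{\frak{b},m}_{(i,j)}(\mathbf{y}) = 0$ explicitly for $y_{i+1,j}$ as a rational function of the already-known variables, and then to check via the hypothesis on the complex leading term that this rational expression actually lies in $\Lambda_U$. First, I would isolate the term containing $y_{i+1,j}$ in~\eqref{pabmijy} to obtain
\[
c^{\textup{hor}}_{i,i+1}\,\frac{y_{i,j}}{y_{i+1,j}} \;=\; c^{\textup{ver}}_{j+1,j}\,\frac{y_{i,j+1}}{y_{i,j}} + c^{\textup{hor}}_{i-1,i}\,\frac{y_{i-1,j}}{y_{i,j}} - c^{\textup{ver}}_{j,j-1}\,\frac{y_{i,j}}{y_{i,j-1}} \;=:\; R,
\]
with the boundary conventions that the term involving $y_{i-1,j}=0$ (resp.\ $y_{i,j-1}=\infty$) is interpreted as $0$. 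Since the $c$'s all lie in $\Lambda_U$ and the remaining $y$'s lie in $\Lambda_U$ or contribute zero, $R$ is a well-defined element of $\Lambda_0$, and inverting the equation forces the unique candidate $y_{i+1,j} = c^{\textup{hor}}_{i,i+1}\,y_{i,j}/R$. Uniqueness among solutions in $\Lambda_U$ is therefore automatic, and the only nontrivial question is whether this candidate really belongs to $\Lambda_U$.

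The key step is to verify that $R \in \Lambda_U$, i.e., that its reduction $\bar R$ modulo $T^{>0}$ is nonzero. But this is exactly what the hypothesis guarantees: reduction modulo $T^{>0}$ commutes with the rational operations defining $R$, and the assumption that $y^\C_{i+1,j}\in\C^*$ solves $\ell^{\frak{b},m}_{(i,j)} \equiv 0 \mod T^{>0}$ translates into
\[
\bar R \;=\; \bar c^{\textup{hor}}_{i,i+1}\,\frac{\bar y_{i,j}}{y^\C_{i+1,j}} \;\in\; \C^*,
\]
where the right-hand side is nonzero because $\bar c^{\textup{hor}}_{i,i+1}$ and $\bar y_{i,j}$ are units in $\C$ (by $c^{\textup{hor}}_{i,i+1},\,y_{i,j}\in\Lambda_U$) and $y^\C_{i+1,j}\neq 0$ by hypothesis. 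Hence $R\in\Lambda_U$, so $y_{i+1,j} = c^{\textup{hor}}_{i,i+1}y_{i,j}/R$ is a genuine element of $\Lambda_U$, and its $T^0$-reduction is $\bar c^{\textup{hor}}_{i,i+1}\bar y_{i,j}/\bar R = y^\C_{i+1,j}$, as required.

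The symmetric case, where one solves for $y_{i,j+1}$ in place of $y_{i+1,j}$, is handled identically by first isolating the term $c^{\textup{ver}}_{j+1,j}\,y_{i,j+1}/y_{i,j}$ in~\eqref{pabmijy}. I do not foresee any genuine obstacle: the substantive content is just the observation that the linear equation in $1/y_{i+1,j}$ is invertible over $\Lambda_U$ precisely when its $T^0$-reduction admits a nonzero complex root, and the only real care needed is in tracking the $\Lambda_U$ versus $\Lambda_0$ distinction and respecting the $0/\infty$ boundary conventions.
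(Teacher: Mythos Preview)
Your proof is correct and follows essentially the same approach as the paper: isolate $y_{i+1,j}$ as a rational function of the other variables via~\eqref{pabmijy}, then invoke the hypothesis to ensure the denominator is a unit. The paper's own proof is considerably terser (it merely notes that $(i+1,j)$ is $\prec_{\mathrm{hor}}$-maximal among the four indices and that the equation can therefore be solved for $y_{i+1,j}$), whereas you spell out explicitly why the resulting expression lands in $\Lambda_U$ and has the correct reduction modulo $T^{>0}$.
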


\begin{proof}
We observe that $(i+1,j)$ is the maximal with respect to the order $\prec_{\mathrm{hor}}$
among the 4 points $(i-1,j), \, (i,j-1), \, (i, j+1), \, (i+1, j)$. Then
the proof immediately follows from the equation $\ell^{\frak{b},m}_{(i, j)} ({\mathbf{y}}) = 0$.
It shows that $y_{i+1,j}$ can be expressed as a rational function
in the other variables by the formula~\eqref{pabmijy}.
\end{proof}

\vspace{-0.1cm}
\begin{figure}[ht]
	\scalebox{0.65}{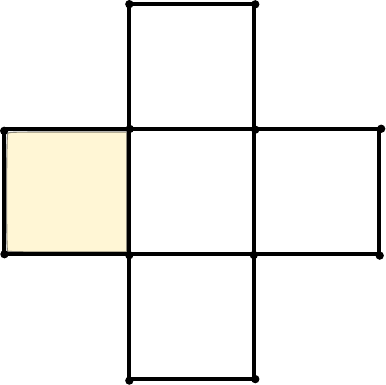}
	\caption{\label{graphicdeslemma} Graphical description of Lemma~\ref{extensionlemma}.}	
\end{figure}

The following lemma is also an immediate consequence of the valuation analysis of
the equation $\ell^{\frak{b},m}_{(i, j)} ({\mathbf{y}}) = 0$.
\begin{lemma}\label{extensionlemma2}
If in addition to the assumption of Lemma~\ref{extensionlemma} we assume 
\begin{itemize}
\item $c^\textup{ver}_{j+1,j}, c^\textup{hor}_{i-1,i}, c^\textup{hor}_{i,i+1}$ and $c^\textup{ver}_{j,j-1}$ are non-zero complex numbers
\item $
\frak{v}_T \left(y^{\phantom{\C}}_{i,j} - y^\C_{i,j} \right), \,
\frak{v}_T \left(y^{\phantom{\C}}_{i-1,j} - y^\C_{i-1,j} \right) \text{ and }
\frak{v}_T \left(y^{\phantom{\C}}_{i,j-1} - y^\C_{i,j-1} \right) > \lambda.
$
\end{itemize}
Then $\frak{v}_T (y_{i+1,j}) = \lambda$ if and only if $\frak{v}_T (y_{i,j+1}) = \lambda$.
\end{lemma}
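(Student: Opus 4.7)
The plan is a direct $T$-adic analysis of the equation $\ell^{\frak{b},m}_{(i,j)}(\mathbf{y}) = 0$. Rearranging so that the two ``outgoing'' terms (involving $y_{i+1,j}$ and $y_{i,j+1}$) sit on the left and the two ``incoming'' terms on the right,
\begin{equation*}
c^\textup{ver}_{j+1, j} \cdot \frac{y_{i, j+1}}{y_{i, j}} - c^\textup{hor}_{i, i+1} \cdot \frac{y_{i,j}}{y_{i+1, j}} = c^\textup{ver}_{j, j-1} \cdot \frac{y_{i, j}}{y_{i, j-1}} - c^\textup{hor}_{i-1, i} \cdot \frac{y_{i-1, j}}{y_{i,j}}. \qquad (\ast)
\end{equation*}
Under the strengthened hypothesis, the $c$'s are nonzero complex constants and the three incoming variables agree with their nonzero complex limits $y^\C_{\bullet,\bullet}$ modulo $T^{>\lambda}$, so the right-hand side of $(\ast)$ equals its complex evaluation up to an error in $T^{>\lambda}\Lambda$. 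Applying the complex SLT-equation at $(i,j)$---which holds because $y^\C_{i+1,j}$ is a nonzero complex solution by the hypothesis of Lemma~\ref{extensionlemma}---that complex evaluation is exactly $c^\textup{ver}_{j+1, j} \cdot y^\C_{i, j+1}/y^\C_{i, j} - c^\textup{hor}_{i, i+1} \cdot y^\C_{i,j}/y^\C_{i+1, j}$. Subtracting then yields
\begin{equation*}
c^\textup{ver}_{j+1, j}\!\left(\frac{y_{i, j+1}}{y_{i, j}} - \frac{y^\C_{i, j+1}}{y^\C_{i, j}}\right) \equiv c^\textup{hor}_{i, i+1}\!\left(\frac{y_{i,j}}{y_{i+1, j}} - \frac{y^\C_{i,j}}{y^\C_{i+1, j}}\right) \pmod{T^{>\lambda}}. \qquad (\ast\ast)
\end{equation*}

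Next I would translate $(\ast\ast)$ into an equivalence of valuations of $y_{i+1,j}$ and $y_{i,j+1}$ themselves. The key observation is that the applications of the lemma of interest arise in boundary configurations prescribed by the conventions \eqref{settingupwithin}--\eqref{settingup}, under which $y^\C_{i,j+1}$ and $y^\C_{i+1,j}$ are set to the degenerate values $0$ or $\infty$, so that $y_{i,j+1}$ and $y_{i+1,j}$ themselves are elements of $T^{>0}\Lambda_0$ whose valuations are what we wish to compare. In the representative case $y^\C_{i,j+1} = 0$, the left-hand difference in $(\ast\ast)$ collapses to $\frac{y_{i,j+1}}{y_{i,j}}$, whose valuation equals $\frak{v}_T(y_{i,j+1})$ since $y_{i,j} \in \Lambda_U$; symmetrically (and applying the flipped variant of Lemma~\ref{extensionlemma} in which $y^\C_{i+1,j} = \infty$ when appropriate), the right-hand difference has valuation $\frak{v}_T(y_{i+1,j})$. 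Hence $(\ast\ast)$ reads
$c^\textup{ver}_{j+1, j}\cdot (\text{quantity of valuation } \frak{v}_T(y_{i,j+1})) \equiv c^\textup{hor}_{i, i+1}\cdot (\text{quantity of valuation } \frak{v}_T(y_{i+1,j})) \pmod{T^{>\lambda}}$, and since the $c$'s are nonzero complex, neither side can absorb or inflate valuation, so the two valuations coincide as soon as either is $\le \lambda$. This is exactly the equivalence $\frak{v}_T(y_{i+1,j}) = \lambda \iff \frak{v}_T(y_{i,j+1}) = \lambda$.

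The main obstacle is to verify that no accidental cancellation in $(\ast\ast)$ pushes either side's valuation strictly above $\lambda$, which would break the rigid balance of valuations and produce only one-sided implications. This is precisely where the nonvanishing assumption on all four coefficients $c$ enters essentially: it guarantees that the leading complex coefficients on both sides of $(\ast\ast)$ are units of $\Lambda_U$, so that each rational monomial keeps its expected valuation and the two sides balance in lockstep.
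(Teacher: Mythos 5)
Your derivation of the congruence $(\ast\ast)$ is correct and is precisely the ``valuation analysis of $\ell^{\frak{b},m}_{(i,j)}(\mathbf{y})=0$'' that the paper invokes (the paper records no further detail, calling the lemma an immediate consequence of it). The gap is in how you conclude from $(\ast\ast)$. The lemma is not applied in boundary configurations where $y^\C_{i,j+1}$ or $y^\C_{i+1,j}$ take the values $0$ or $\infty$: the assignments in \eqref{settingupwithin}--\eqref{settingup} are formal bookkeeping conventions for discarding terms of the SLT-equation, not statements about the $\Lambda$-valued variables, and taking $y^\C_{i+1,j}=\infty$ would contradict the hypothesis of Lemma~\ref{extensionlemma} that $y^\C_{i+1,j}$ is a \emph{non-zero complex} solution. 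Where the lemma is actually used --- Step 2 of Section~\ref{insidebm}, to obtain \eqref{valofy2} and the expansion \eqref{eq:yCr+1s-1} --- all indices lie in the interior of $B(m)$, every $y^\C_{\bullet,\bullet}$ is a non-zero complex number (Corollary~\ref{corsymmetricsol}) and every $y_{\bullet,\bullet}$ lies in $\Lambda_U$; the conclusion needed there is $\frak{v}_T(y_{i+1,j}-y^\C_{i+1,j})=\lambda$ if and only if $\frak{v}_T(y_{i,j+1}-y^\C_{i,j+1})=\lambda$, i.e.\ the quantities $\frak{v}_T(y_{i+1,j})$, $\frak{v}_T(y_{i,j+1})$ in the statement must be read as valuations of these differences (with a literal reading and $y\in\Lambda_U$ both valuations are $0$, making the claim vacuous for $\lambda>0$). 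Moreover, even granting your degenerate-value reading, the asserted symmetry fails: when $y^\C_{i+1,j}=\infty$ the right-hand difference in $(\ast\ast)$ collapses to $y_{i,j}/y_{i+1,j}$, whose valuation is $-\frak{v}_T(y_{i+1,j})$ (since $y_{i,j}\in\Lambda_U$), not $\frak{v}_T(y_{i+1,j})$, so the congruence would not yield the stated equivalence.

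The correct finish from $(\ast\ast)$ is short. Writing each side over its denominator, the left side equals a unit of $\Lambda_0$ times $y^\C_{i,j}\bigl(y_{i,j+1}-y^\C_{i,j+1}\bigr)-y^\C_{i,j+1}\bigl(y_{i,j}-y^\C_{i,j}\bigr)$ and the right side a unit times $y^\C_{i+1,j}\bigl(y_{i,j}-y^\C_{i,j}\bigr)-y^\C_{i,j}\bigl(y_{i+1,j}-y^\C_{i+1,j}\bigr)$, the denominators being units because $y^\C_{i,j},y^\C_{i+1,j}\in\C^*$ and $y_{i,j},y_{i+1,j}\equiv y^\C_{i,j},y^\C_{i+1,j}\bmod T^{>0}$ by Lemma~\ref{extensionlemma}. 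Since $\frak{v}_T(y_{i,j}-y^\C_{i,j})>\lambda$, each side has valuation equal to that of its respective difference whenever that valuation is $\le\lambda$, and valuation $>\lambda$ otherwise. Combined with $(\ast\ast)$ (the two sides agree modulo $T^{>\lambda}$) and the non-vanishing of the complex constants $c^\textup{ver}_{j+1,j}$, $c^\textup{hor}_{i,i+1}$, this gives $\frak{v}_T(y_{i,j+1}-y^\C_{i,j+1})=\lambda$ if and only if $\frak{v}_T(y_{i+1,j}-y^\C_{i+1,j})=\lambda$, which is the form in which the lemma is used in \eqref{valofy2}.
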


Now we ready to state the main theorem of this section.
\begin{theorem}\label{splitleadingtermequationimpliesnonzero}
If there exist non-zero complex numbers $c^{\textup{hor}, \C}_{i, i+1}$ and $c^{\textup{ver}, \C}_{j+1, j}$ ($i, j \geq k$)
for which the SLT-equation~\eqref{splitleadingtermequ} admits a solution
$$
\mathbf{y}^\C := (y^\C_{i,j} \in \C^* \mid (i,j) \in \Gamma(n) \backslash B(m) \cup \{ (m,m)\}),
$$
then there exists a bulk-deformation $\frak{b}$
(depending on $m$ and $t$) of the form~\eqref{bulkparak2} such that
\begin{enumerate}
\item The bulk-deformed potential $W^\frak{b}({\mathbf{y}})$ has a critical point $(y_{i,j} \in \Lambda_U \mid (i,j) \in \Gamma(n))$ satisfying
$$
y^\C_{i,j} \equiv y^{\phantom{\C}}_{i,j} \mod T^{>0} \quad \text{for } (i,j) \in \Gamma(n) \backslash B(m) \cup \{ (m,m) \}.
$$
\item Also,
$\exp (\frak{b}^{\textup{hor} \phantom{, \C}}_{i,i+1})\equiv c^{\textup{hor}, \C}_{i,i+1},
\, \exp (\frak{b}^{\textup{ver} \phantom{, \C}}_{j+1,j})\equiv c^{\textup{ver}, \C}_{j+1, j}
 \mod T^{>0}$.
\end{enumerate}
\end{theorem}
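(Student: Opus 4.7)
The plan is to assemble the four-step procedure outlined in the introduction of Section~\ref{section:decompositionofpote}, with the hypothesis of the theorem furnishing the input for step~(1) and the remaining three steps promoting the complex data to a $\Lambda_U$-valued critical point. First, step~(2), carried out in Section~\ref{symmetriccomplexsolinbm}, extends $\mathbf{y}^\C$ from $\Gamma(n)\setminus B(m)\cup\{(m,m)\}$ to a $\C^*$-valued solution of the leading-order equations $\ell^{\frak{b},m}_{(i,j)}(\mathbf{y})=0$ on the remaining sites of $B(m)$ under the convention~\eqref{settingupwithin}, by exploiting the symmetry of the square block of equations. After this stage we have complex data $y^\C_{i,j}\in\C^*$ at every site of $\Gamma(n)$ together with complex constants $c^{\textup{hor},\C}_{i,i+1}, c^{\textup{ver},\C}_{j+1,j}$ satisfying every leading-order balance.

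Next I would promote the complex data to an exact solution of $\partial_{(i,j)} W^\frak{b}(\mathbf{y})=0$ in $\Lambda_U$ by iterative application of Lemma~\ref{extensionlemma}. Given upgraded neighbors $y_{i-1,j}, y_{i,j-1}, y_{i,j}, y_{i,j+1}$ and the leading term $y^\C_{i+1,j}\in\C^*$, the full $T$-adic equation $\partial_{(i,j)} W^\frak{b}(\mathbf{y})=0$ determines $y_{i+1,j}$ uniquely in $\Lambda_U$ as a rational expression in the earlier variables, and Lemma~\ref{extensionlemma2} confirms that the valuation stays in $\Lambda_U$. Propagating in the order $\prec_{\mathrm{hor}}$ (or $\prec_{\mathrm{ver}}$), one first covers the interior of $B(m)$ starting from the boundary values prescribed by~\eqref{settingupwithin} and the previous step; this is step~(3), carried out in Section~\ref{insidebm}. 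One then repeats the propagation outside $B(m)$, starting from the diagonal $y_{s,n+1-s}\equiv 1 \bmod T^{>0}$ and working inwards through the ladder; this is step~(4), carried out in Section~\ref{outsideofbm}.

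The bulk parameters $\frak{b}^\textup{hor}_{i,i+1}$ and $\frak{b}^\textup{ver}_{j+1,j}$ with $i,j\geq k$ are tuned along the way, in direct analogy with the warm-up argument in the proof of Theorem~\ref{theorem_maincompleteflag3}: whenever the residual error in an equation can no longer be absorbed into a yet-undetermined $y$-variable, one solves $\exp(\frak{b}^\textup{hor}_{i,i+1}) = c^{\textup{hor},\C}_{i,i+1}+\delta$ or $\exp(\frak{b}^\textup{ver}_{j+1,j}) = c^{\textup{ver},\C}_{j+1,j}+\delta$ with $\delta\in\Lambda_+$ to cancel the error. Since $\delta\in\Lambda_+$, the resulting parameters automatically satisfy conclusion~(2).

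The main obstacle will lie in ensuring that the inside and outside propagations match consistently along $\partial B(m)$, in particular at the shared corner $(m,m)$. The diagonal equations $\ell^m_{(l)}(\mathbf{y})=0$ in~\eqref{pamly} are built into the SLT-equation precisely to reconcile the two boundary conventions~\eqref{settingupwithin} and~\eqref{settingup}; they guarantee compatibility of $y_{m,m}$ between the two streams of propagation at leading order, and the inductive structure above is designed so that this matching persists at all higher orders in $T$. Verifying the absence of accidental divisions by zero during the propagation, and checking that the freedom in the bulk parameters and the remaining $y$-variables suffices to absorb every residual, is the central technical content of Sections~\ref{insidebm} and~\ref{outsideofbm}.
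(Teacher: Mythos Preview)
Your high-level outline matches the paper's four-step structure, and you correctly identify the roles of Sections~\ref{symmetriccomplexsolinbm}, \ref{insidebm}, and~\ref{outsideofbm} together with Lemma~\ref{extensionlemma} as the propagation device. However, two related details in your description of step~(4) are inverted relative to what the paper actually does, and the discrepancy matters.

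First, outside $B(m)$ the paper does \emph{not} start at the outer diagonal $y_{s,n+1-s}=1$ and work inward; it starts from the seed values on $\mcal{I}_{\textup{initial}}$ adjacent to $B(m)$ (see~\eqref{IndexsetforSeed}) and propagates \emph{outward} toward $i+j=n$. Second, the bulk parameters $c^{\textup{hor}}_{i,i+1}, c^{\textup{ver}}_{j+1,j}$ are not tuned ``along the way'' but are the \emph{last} quantities solved for, precisely at the outermost diagonal $i+j=n$. What makes this possible is the coordinate change~\eqref{coordinatechange} to $z$-variables, which you do not mention: in the $z$-coordinates the equations $k^{\frak{b},m}_{(i,j)}(\mathbf{z})=0$ for $i+j<n$ are entirely free of bulk parameters (see~\eqref{tildepabijz}), so one can propagate outward over $\Lambda_U$ without knowing $\frak{b}$, and only the equations on the last diagonal $i+j=n$ involve the products $\prod c^{\textup{hor}}$ and $\prod c^{\textup{ver}}$, which are then read off. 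Your inward scheme would require the bulk parameters from the outset, before there are enough constraints to pin them down. So while your overall architecture is right, the mechanism for decoupling the $\Lambda_U$-propagation from the determination of $\frak{b}$ is the missing ingredient.
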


%------------------------------------------------------------------------------------------
\subsection{Solving SLT-equation with $\frak{b} = 0$ within $B(m)$}\label{symmetriccomplexsolinbm}

The goal of the section is to find a ``{symmetric}" complex solution of
\begin{equation}\label{paijmBm}
\ell^{0,m}_{(i, j)}(\mathbf{y})= - \frac{y_{i, j+1}}{y_{i, j}} - \frac{y_{i-1, j}}{y_{i,j}} + \frac{y_{i,j}}{y_{i+1, j}} + \frac{y_{i, j}}{y_{i, j-1}} = 0 \quad \text{ for all } (i, j) \in B(m).
\end{equation}
Notice that with the choice we made in~\eqref{bulkparak2}, $\frak{b}^\textup{hor}_{i,i+1} = 0$ (resp. $\frak{b}^\textup{ver}_{j+1,j} = 0$) for $i < k$ (resp. $j < k$).
This in turn implies
$\ell^{\frak{b},m}_{(i, j)}(\mathbf{y})$ given in~\eqref{pabmijy} actually coincides with
$\ell^{0,m}_{(i, j)}(\mathbf{y})$ for $(i,j) \in B(m)$.

\begin{lemma}\label{minussol}
Let $\mathbf{y}^\C := ({y}^\C_{i,j} \in \C^* \mid (i,j) \in B(m) )$ be a
solution  of~\eqref{paijmBm}. Then
$$
\widetilde{y^\C_{i,j}} := c \cdot y^\C_{i,j} \quad \mbox{for }\, (i,j) \in B(m)
$$
is also a solution of~\eqref{paijmBm} for any non-zero complex number $c$.
\end{lemma}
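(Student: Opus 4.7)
The plan is to observe that the system~\eqref{paijmBm} enjoys a free $\C^*$-scaling symmetry in the variables $\{y_{i,j} : (i,j) \in B(m)\}$, of which the substitution $\widetilde{y}^\C_{i,j} := c \cdot y^\C_{i,j}$ is the manifestation. First I would note that each summand of $\ell^{0,m}_{(i,j)}$ is a ratio of two of the variables $y_{\bullet, \bullet}$, and hence is homogeneous of weighted degree zero under a uniform rescaling. Consequently, as long as every ratio appearing in $\ell^{0,m}_{(i,j)}$ is either (a) a ratio of two variables \emph{both} in $B(m)$, in which case it is invariant, or (b) a ratio in which one of the variables is a boundary one frozen at $0$ or $\infty$, in which case the corresponding summand is $0$ independently of the rescaling, the equation at $(i,j)$ is preserved.

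Next I would verify that this dichotomy actually holds for every $(i,j) \in B(m)$. For an index with $1 < i, j$ and $i,j < m$, all four neighbours $(i\pm 1, j)$, $(i, j\pm 1)$ belong to $B(m)$, and each of the four terms of $\ell^{0,m}_{(i,j)}$ takes the form
$$
\frac{\widetilde{y}^\C_{\bullet,\bullet}}{\widetilde{y}^\C_{\circ,\circ}} \;=\; \frac{c\, y^\C_{\bullet,\bullet}}{c\, y^\C_{\circ,\circ}} \;=\; \frac{y^\C_{\bullet,\bullet}}{y^\C_{\circ,\circ}},
$$
so is invariant. For a boundary index of $B(m)$, some neighbours fall outside $B(m)$: these are precisely among $y_{0,j}$, $y_{i,0}$, $y_{m,m+1}$, and $y_{m+1,m}$, and the conventions in~\eqref{settingupwithin} together with $y_{0,\bullet}=0$, $y_{\bullet,0}=\infty$ assign them the values $0$ or $\infty$. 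The corresponding summand is then of the form $\tfrac{0}{c\, y^\C_{i,j}}$ or $\tfrac{c\, y^\C_{i,j}}{\infty}$, which vanishes both before and after rescaling, while the remaining summands are ratios of two scaled variables and are invariant as above.

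Assembling these checks yields $\ell^{0,m}_{(i,j)}(\widetilde{\mathbf{y}}^\C) = 0$ for every $(i,j) \in B(m)$, establishing the lemma. There is no real obstacle here: the content is the elementary observation that $\ell^{0,m}_{(i,j)}$ has total weight zero in the $B(m)$-variables, and the only minor subtlety is confirming that the degenerate boundary terms do not spoil the symmetry. This scaling freedom can be exploited later to rescale a given complex solution to one with a prescribed value of $y^\C_{m,m}$, e.g.\ to match the value obtained from solving the SLT-equation on $\Gamma(n)\setminus B(m) \cup \{(m,m)\}$.
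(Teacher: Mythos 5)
Your proof is correct and takes essentially the same route as the paper: both rest on the single observation that every summand of $\ell^{0,m}_{(i,j)}$ is a ratio of two $y$-variables and hence is homogeneous of degree zero under a uniform rescaling, so $\ell^{0,m}_{(i,j)}(c\cdot\mathbf{y}) = \ell^{0,m}_{(i,j)}(\mathbf{y})$. The paper's proof is a one-liner recording exactly this identity; you have merely added the (correct, if routine) check that the frozen boundary variables at $0$ and $\infty$ contribute vanishing terms before and after rescaling, which is bookkeeping rather than a different idea.
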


\begin{proof}
The lemma immediately follows from $\ell^{0,m}_{(i,j)} (\mathbf{y}) = \ell^{0,m}_{(i,j)} (c \cdot \mathbf{y})$.
\end{proof}

\begin{lemma}\label{symmetricsol}
There exists a solution $\mathbf{y}^\C := \left(y^\C_{i,j} \in \C^* \mid i+j \leq m+1\right)$ of the system of equations
\begin{equation}\label{paijmbfyijm}
\ell^{0,m}_{(i,j)} (\mathbf{y}) = 0 \quad \mbox{for }\, i + j \leq m
\end{equation}
such that it is symmetric in that
\begin{equation}\label{symmetricproperty}
y^\C_{i,j} = \left(y^\C_{j,i}\right)^{-1}.
\end{equation}
\end{lemma}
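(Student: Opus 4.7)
The plan is to impose the symmetry ansatz $y_{i,j} = 1/y_{j,i}$ from the outset and then construct the desired solution by induction on $m$. The first step is to verify that this ansatz is consistent with the equations. A direct substitution of $y_{a,b} = 1/y_{b,a}$ into~\eqref{paijmBm} yields the identity
\[
\ell^{0,m}_{(i,j)}(\mathbf{y})\big|_{\text{ansatz}} \;=\; -\,\ell^{0,m}_{(j,i)}(\mathbf{y})\big|_{\text{ansatz}},
\]
so the equations at $(i,j)$ and $(j,i)$ become equivalent, and each self-transpose equation at a diagonal point $(i,i)$ is automatically satisfied once we fix $y^\C_{i,i} = 1$ (the symmetry forces $(y^\C_{i,i})^2 = 1$). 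Hence it suffices to choose non-zero complex values $\{y^\C_{i,j} : 1 \le i < j,\, i+j \le m+1\}$ solving $\ell^{0,m}_{(i,j)}(\mathbf{y}) = 0$ for $1 \le i < j$ with $i+j \le m$.

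The base case $m=2$ is immediate: the only free variable $y^\C_{2,1} \in \C^*$ may be chosen arbitrarily, since the sole equation at $(1,1)$ reduces to $y^\C_{2,1}\cdot y^\C_{1,2} = 1$, a tautology under the ansatz. For the inductive step from $m-1$ to $m$, assume a symmetric non-zero solution on $\{i+j \le m\}$ is given; extend it by assigning values on the new upper anti-diagonal $\{(i,j) : i+j = m+1,\ i \le j\}$ (with $y^\C_{i,i} = 1$ at the diagonal point if $m$ is odd) so that each new equation at $(i,j)$ with $i+j = m$, $i<j$, is satisfied. Each such equation can be solved for $y_{i,j+1}$ explicitly as
\[
y_{i,j+1} \;=\; \frac{y_{i,j}^{\,2}}{y_{i+1,j}} + \frac{y_{i,j}^{\,2}}{y_{i,j-1}} - y_{i-1,j}.
\]
Processing these equations in order of decreasing $j-i$ (that is, starting at the equation closest to the diagonal and working outward along the anti-diagonal), the term $y_{i+1,j}$ on the right-hand side is either $1$ (when $m$ is odd and $(i+1,j)$ is the diagonal point on the anti-diagonal), a single free parameter (when $m$ is even and we are at the starting step), or already determined at a previous step of the propagation. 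Thus the whole anti-diagonal is assigned consistently.

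The main obstacle I expect is to ensure that every propagated value stays in $\C^*$, rather than vanishing or becoming undefined through the denominators in the recursive formula. Since each $y^\C_{i,j+1}$ is a rational expression in the previously determined entries and in the free parameters inherited from earlier inductive steps, the locus of bad parameter choices is a proper Zariski-closed subset of the (cumulative) parameter space. Therefore a generic choice of the free parameter at each stage of the induction yields a symmetric solution $(y^\C_{i,j})$ with all entries in $\C^*$, as required.
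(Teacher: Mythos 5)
Your reduction to the upper-triangular part is correct and is actually a cleaner organization than anything stated explicitly in the paper: under the transpose ansatz $y_{j,i}=1/y_{i,j}$ one indeed has $\ell^{0,m}_{(i,j)} = -\ell^{0,m}_{(j,i)}$, so the equations in the two halves are equivalent, and each diagonal equation holds automatically once $y_{i,i}^2=1$. The propagation formula for $y_{i,j+1}$ is also right (though note a small slip: you process the anti-diagonal in \emph{increasing} $j-i$, closest to the diagonal first, not decreasing). The genuine gap is in the final genericity step. The claim that ``the locus of bad parameter choices is a proper Zariski-closed subset'' presupposes that the propagated rational expressions are not identically zero, nor identically undefined, as functions of the free parameters — that is, that the bad locus is not all of parameter space. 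That non-degeneracy is precisely the substance of the lemma, and you assert it without argument; a priori the recursion could collapse. The cheapest way to close the gap is to exhibit one good point, which is exactly what the paper does: it simply writes down the explicit solution $y^\C_{i,i}=1$, $y^\C_{i,j}=\prod_{r=0}^{j-i-1}(2i+2r)$ for $i<j$, and $y^\C_{i,j}=(y^\C_{j,i})^{-1}$ for $i>j$, for which the four ratios appearing in $\ell^{0,m}_{(i,j)}$ evaluate to $2j$, $2i-2$, $2i$, $2j-2$ respectively and cancel against the signs. Either supply such a witness, or prove non-degeneracy of the recursion inductively in the spirit of Proposition~\ref{rationalfunctionzij}; as written, the proposal is not a complete proof.
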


\begin{proof}
It is straightforward to check
\begin{equation}\label{solutionofpamij}
\displaystyle y^\C_{i,j} :=
\begin{cases}
1 \quad &\mbox{for } i = j \\
\displaystyle \prod_{r=0}^{j - i - 1} (2i + 2r) \quad &\mbox{for } i < j \\
\displaystyle \prod_{r=0}^{i - j - 1} (2j + 2r)^{-1} \quad &\mbox{for } i > j
\end{cases}
\end{equation}
is a symmetric solution for~\eqref{paijmbfyijm}.
\end{proof}

We are ready to prove the existence of a symmetric solution in the sense of~\eqref{symmetricproperty}.
\begin{proposition}\label{propsymmetricsol}
There exists a symmetric solution $\mathbf{y}^\C := \left({y}^\C_{i,j} \in \C^* \mid (i,j) \in B(m)\right)$
of~\eqref{paijmBm} satisfying $y^\C_{i,i} = \pm 1$ for all $1 \leq i \leq m$.
\end{proposition}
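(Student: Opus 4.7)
The plan is to extend the symmetric partial solution of Lemma~\ref{symmetricsol} to all of $B(m)$ by induction along anti-diagonals, exploiting a built-in $\Z/2$-symmetry of the system~\eqref{paijmBm}. The key observation is that, setting $(\mathbf{y}^\ast)_{i,j} := 1/y_{j,i}$, a direct computation yields
\begin{equation*}
\ell^{0,m}_{(i,j)}(\mathbf{y}^\ast) \;=\; -\,\ell^{0,m}_{(j,i)}(\mathbf{y}).
\end{equation*}
Hence whenever a tuple is \emph{symmetric} in the sense that $y_{j,i}\,y_{i,j} = 1$, the equation at $(i,j)$ is equivalent to that at $(j,i)$; the equations on the main diagonal $\{(i,i)\}$ are then automatically satisfied, and the constraint $y_{i,i} = 1/y_{i,i}$ forces $y_{i,i}^2 = 1$, i.e., $y_{i,i} = \pm 1$, for free. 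It therefore suffices to construct a symmetric extension of Lemma~\ref{symmetricsol}'s solution that satisfies only the strict upper-triangular equations $\ell^{0,m}_{(i,j)} = 0$ for $1 \leq i < j \leq m$.

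Starting from the base case provided by Lemma~\ref{symmetricsol} on $\{(i,j) \in B(m) : i + j \leq m + 1\}$, I would extend inductively to anti-diagonals $i + j = k$ for $k = m+2, m+3, \ldots, 2m$. The inductive step marches along the anti-diagonal $i + j = k$ starting from its lower-left endpoint $(i_0, k - i_0)$ with $i_0 := \max(1, k - m)$. At this initial point, the boundary condition $y_{i_0,\,k - i_0 + 1} = 0$ supplied by~\eqref{settingupwithin} (since $k - i_0 + 1 \geq m + 1$) reduces the equation $\ell^{0,m}_{(i_0, k - i_0)} = 0$ to a single-variable relation that determines $y_{i_0 + 1,\,k - i_0}$. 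At each subsequent point $(i, k - i)$, the previous step has already fixed $y_{i,\, k - i + 1}$, so the equation contains exactly one new unknown and determines it uniquely. The equation at the symmetric endpoint becomes a consistency identity that is automatically satisfied by the $\Z/2$-symmetry of the data already in place. Once the upper triangle on $i + j = k$ has been processed, we extend to the lower triangle by the forced definition $y_{j,i} := 1/y_{i,j}$.

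The principal technical obstacle is verifying that the denominator appearing in the explicit formula for each new variable remains non-zero throughout the march; otherwise the inductive step would fail to produce a $\C^*$-valued extension. I expect to handle this via a careful inductive analysis of the sign pattern and factorial structure of the entries, taking the closed-form formula of Lemma~\ref{symmetricsol} as base data and tracking how the sign alternation propagates upon crossing the anti-diagonal $i + j = m + 1$ (the numerical pattern $y_{i+1,m+1-i} = -y_{i,m-i}$ observed along $i+j=m+1$ is indicative). Once non-vanishing is secured, the resulting symmetric tuple lies in $(\C^*)^{|B(m)|}$, automatically satisfies $y^\C_{i,i} = \pm 1$ for all $i$, and solves every equation~\eqref{paijmBm} on $B(m)$.
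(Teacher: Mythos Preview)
Your observation that $\ell^{0,m}_{(i,j)}(\mathbf{y}^*) = -\ell^{0,m}_{(j,i)}(\mathbf{y})$ for $(\mathbf{y}^*)_{i,j} = 1/y_{j,i}$ is correct, and reducing to the upper triangle is a reasonable strategy. However, the proposal has two genuine gaps beyond the indexing confusion (your march along anti-diagonal $k$ actually determines variables on anti-diagonal $k+1$, not $k$). First, the non-vanishing of the denominator at each step is never established; you explicitly flag this as something you ``expect to handle,'' which is not a proof. Second, and more subtly, your scheme does not explain how the diagonal entries $y^\C_{i,i}$ for $2i > m+1$ arise. If they are produced by the march (e.g.\ the equation at $(i,i+1)$ determines $y_{i+1,i+1}$), you must verify that the output equals $\pm 1$, since otherwise the symmetry constraint $y_{i,i} = 1/y_{i,i}$ fails and the lower-triangular equations are no longer implied. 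If instead you choose the signs freely, you must show some choice makes the remaining upper-triangular system consistent. Neither is addressed.

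The paper sidesteps both issues by exploiting a \emph{different} symmetry of \eqref{paijmBm}: the central point reflection $(i,j) \mapsto (m{+}1{-}j,\, m{+}1{-}i)$ rather than your main-diagonal reflection $(i,j) \mapsto (j,i)$. Concretely, it sets
\[
y^\C_{i,j} := (-1)^{\,i+j-m-1}\, y^\C_{m+1-j,\,m+1-i} \qquad \text{for } i+j > m+1,
\]
and then checks directly that $\ell^{0,m}_{(i,j)}(\mathbf{y}^\C) = -\ell^{0,m}_{(m+1-j,\,m+1-i)}(\mathbf{y}^\C) = 0$ for $i+j \geq m+2$, while on the middle anti-diagonal $i+j = m+1$ the identity $y^\C_{i,j+1} = -y^\C_{i-1,j}$ collapses the equation. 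Non-vanishing is immediate because the values of Lemma~\ref{symmetricsol} are explicit non-zero numbers, and the symmetry \eqref{symmetricproperty} together with $y^\C_{i,i} = \pm 1$ follows at once from the closed formula. Your inductive march would, in fact, reproduce exactly this closed form, so the cleanest repair of your argument is simply to guess this formula and verify it directly, as the paper does.
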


\begin{proof}
We start with a solution $y^\C_{i,j} \in \C^*$ for $i+j \leq m+1$ given in
Lemma~\ref{symmetricsol}. By the choice, it satisfies $y^\C_{i,i} = 1$ for all $i$ with $2i \leq m$.
For the remaining indices $(i,j)$ with $i + j > m + 1$, we put
\begin{equation}\label{fijfullm}
y_{i,j}^\C := (-1)^{i+j-m-1} \, y^\C_{m+1-j, m+1-i}.
\end{equation}
We now show that this choice gives rise to a solution for~\eqref{paijmBm}.

For $(i,j) \in B(m)$ with $i + j \geq m + 2$, it is straightforward to check
\begin{align*}
\ell^{0,m}_{(i,j)} (\mathbf{y}^\C)  = - \ell^{0,m}_{(m+1-j,m+1-i)} (\mathbf{y}^\C) = 0
\end{align*}
from the choice made in Lemma~\ref{symmetricsol} for $i+j\leq m+1$. For the indices
$(i,j)$ with $i + j = m + 1$, the definition \eqref{fijfullm} implies
$y^\C_{i, j+1} = -y^\C_{i-1, j}$ and hence
\begin{align*}
\ell^{0,m}_{(i,j)} (\mathbf{y}^\C)  &= - \frac{y^\C_{i, j+1}}{y^\C_{i,j}} - \frac{y^\C_{i-1,j}}{y^\C_{i,j}} + \frac{y^\C_{i,j}}{y^\C_{i+1,j}} +\frac{y^\C_{i,j}}{y^\C_{i,j-1}} = \frac{y^\C_{i-1, j}}{y^\C_{i,j}} - \frac{y^\C_{i-1,j}}{y^\C_{i,j}} + \frac{y^\C_{i,j}}{y^\C_{i+1,j}} -\frac{y^\C_{i,j}}{y^\C_{i+1,j}} = 0.
\end{align*}
The symmetry ~\eqref{symmetricproperty} for $(i,j)$ with $i + j \leq m + 1$ and the definition \eqref{fijfullm} also immediately imply the symmetry of the solution and $y_{i,i} = \pm 1$
for all $1 \leq i \leq m$.
\end{proof}

\begin{corollary}\label{corsymmetricsol}
For any $c \in \C^*$, there is a solution $\mathbf{y}^\C := \left({y}^\C_{i,j} \in \C^* \mid (i,j)
\in B(m)\right)$ of~\eqref{paijmBm} such that
\begin{enumerate}
\item $y^\C_{i,j} \cdot y^\C_{j,i} = c^2$ .
\item $y^\C_{m,m} = c^{\phantom{\C}}$
\item $y^\C_{i,i} = \pm \, c^{\phantom{\C}}$ for any $1 \leq i < m$.
\end{enumerate}
\end{corollary}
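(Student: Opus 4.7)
The proof is essentially immediate from the two preceding results: Proposition~\ref{propsymmetricsol} furnishes a symmetric $\C^{*}$-valued solution of~\eqref{paijmBm} on $B(m)$, and Lemma~\ref{minussol} tells us that we can freely rescale any such solution by a non-zero complex scalar. The only point requiring a tiny bit of care is determining the precise value of $\tilde{y}^{\C}_{m,m}$ in the solution produced by Proposition~\ref{propsymmetricsol}, so that we know which scalar to use in order to hit $y^{\C}_{m,m} = c$ on the nose rather than $\pm c$.

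The plan is as follows. Let $\tilde{\mathbf{y}}^{\C} = (\tilde{y}^{\C}_{i,j})$ denote the symmetric solution produced by Proposition~\ref{propsymmetricsol}, which by construction satisfies $\tilde{y}^{\C}_{i,j} \cdot \tilde{y}^{\C}_{j,i} = 1$ for all $(i,j) \in B(m)$ and $\tilde{y}^{\C}_{i,i} \in \{\pm 1\}$ for $1 \leq i \leq m$. First, I would read off the sign of $\tilde{y}^{\C}_{m,m}$ from the construction. Since $m \geq 2$ implies $2m > m+1$, the extension rule~\eqref{fijfullm} evaluated at $(i,j) = (m,m)$ gives
\[
\tilde{y}^{\C}_{m,m} \;=\; (-1)^{2m - m - 1}\,\tilde{y}^{\C}_{m+1-m,\,m+1-m} \;=\; (-1)^{m-1}\,\tilde{y}^{\C}_{1,1} \;=\; (-1)^{m-1},
\]
where the last equality uses $\tilde{y}^{\C}_{1,1} = 1$ from formula~\eqref{solutionofpamij} in Lemma~\ref{symmetricsol}.

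Given this, for any $c \in \C^{*}$ I would set $a := (-1)^{m-1}\,c \in \C^{*}$ and define
\[
y^{\C}_{i,j} \;:=\; a \cdot \tilde{y}^{\C}_{i,j} \qquad \text{for every } (i,j) \in B(m).
\]
By Lemma~\ref{minussol}, this rescaled tuple still solves~\eqref{paijmBm}. The three required properties then drop out by direct computation: $y^{\C}_{i,j} \cdot y^{\C}_{j,i} = a^{2}\,\tilde{y}^{\C}_{i,j} \tilde{y}^{\C}_{j,i} = a^{2} = c^{2}$; $y^{\C}_{m,m} = a \cdot (-1)^{m-1} = c$; and $y^{\C}_{i,i} = a \cdot (\pm 1) = \pm c$ for $1 \leq i < m$. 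There is no real obstacle here beyond correctly identifying the sign $(-1)^{m-1}$ in the first step so that the scalar $a$ is chosen to cancel it; once that is done, the scaling invariance provided by Lemma~\ref{minussol} does all the work.
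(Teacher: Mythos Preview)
Your proof is correct and follows essentially the same approach as the paper: take the symmetric solution from Proposition~\ref{propsymmetricsol} (whose diagonal entries lie in $\{\pm 1\}$) and rescale it via Lemma~\ref{minussol} by an appropriate multiple of $c$. The only difference is that you compute the sign $\tilde{y}^{\C}_{m,m} = (-1)^{m-1}$ explicitly, whereas the paper simply notes that $\tilde{y}^{\C}_{m,m} \in \{\pm 1\}$ and multiplies by $\pm c$ accordingly; this extra precision is harmless and arguably clearer.
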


\begin{proof}
The component $y^\C_{m,m}$ of a solution from Proposition~\ref{propsymmetricsol} is either $1$ or $-1$. By multiplying by $\pm c$ to the solution found therein and using Lemma \ref{minussol},
we produce another solution that satisfy $(1), (2)$ and $(3)$.
\end{proof}

%------------------------------------------------------------------------------------------
\subsection{Determination of $\mathbf{y}$ inside $B(m)$}\label{insidebm}

\begin{hypothesis}
We assume that the SLT-equation of $\Gamma(n)$ associated with $B(m)$ has a solution for some non-zero complex numbers $c^{\textup{hor},\C}_{i, i+1}$'s and $c^{\textup{ver},\C}_{j+1, j}$'s
for $i, j \geq k= \left\lceil  n/2 \right\rceil$.
\end{hypothesis}

Consider the variables
\begin{equation}\label{seedabovebox}
y^\C_{m,m} \textup{ and } y^\C_{i, m+1} \, \text{with } 1 \leq i \leq m
\end{equation}
where $y^\C_{i,j} \in \C^*$ is the $(i,j)$-th component of a solution of the SLT-equation.
In order to emphasize that~\eqref{seedabovebox} has been pre-determined, we denote the values by
\begin{equation}\label{eq:dij}
d^{\phantom{\C}}_{i,j} := y_{i,j}^\C, \quad (i,j) = (m,m) \text{ or } (i,m+1)
\text{ with } 1 \leq i\leq m.
\end{equation}
Setting it as the initial part for a solution and using Lemma~\ref{extensionlemma}, we promote it to a solution of~\eqref{pabmijy} over $\Lambda_U$ \emph{in the increasing order of $(i,j) \in B(m)$
with respect to the order $\prec_{\mathrm{hor}}$}. For a pictorial outline of this section, see Figure~\ref{Stepsinside}.

\begin{figure}[h]
	\scalebox{0.9}{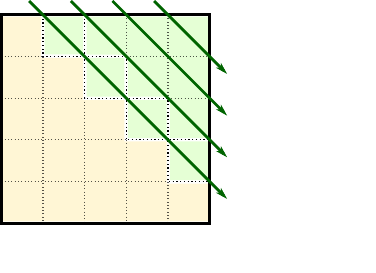}
	\caption{\label{Stepsinside} Pictorial outline of Section~\ref{insidebm}.}	
\end{figure}

\smallskip

\noindent{\bf Step 1. $(i, j) \in B(m)$ with $i + j \leq m + 1$:} We begin by taking
$y^{\phantom{\C}}_{i,j} := {y^\C_{i,j}} \in \C^* \subset \Lambda_U $ where $(y^\C_{i,j} \mid (i,j) \in B(m))$
is a solution satisfying $y^\C_{m,m} = d^{\phantom{\C}}_{m,m}$ from Corollary~\ref{corsymmetricsol}
for all $(i,j)$ with $i + j \leq m+1$.

\smallskip

\smallskip

\noindent{\bf Step 2. $(i, j) \in B(m)$ with $m + 2 \leq i + j \leq 2m$:}
We first compute
$$
\partial_{(i,j)} W({\mathbf{y}})
= \left(- \frac{y_{j-1,m}}{y_{j,m}} + \frac{y_{j,m}}{y_{j+1,m}} + \frac{y_{j,m}}{y_{j,m-1}} \right) T^{1-t}
+  \left( - \frac{y_{j,m+1}}{y_{j,m}} \right) T^{1+(m-j)t}
$$
from which we have
$$
\displaystyle \nu = \frak{v} \left(\partial_{(i,j)} W({\mathbf{y}})\right)
=1-t.
$$
Then by definition of $f_{(j,m)}$ in \eqref{eq:fbij} with ${\frak b} = 0$, we have
$$
f_{(j,m)} (\mathbf{y})  = \left(- \frac{y_{j-1,m}}{y_{j,m}} + \frac{y_{j,m}}{y_{j+1,m}} + \frac{y_{j,m}}{y_{j,m-1}} \right)
 +  \left( - \frac{y_{j,m+1}}{y_{j,m}} \right) T^{(m-j+1)t}.
$$

For an index $j$ with $1 \leq j < m$, we decompose $f_{(j,m)} (\mathbf{y})$:
$$
f_{(j,m)} (\mathbf{y}) = h_{(j,m)}^{(1)} (\mathbf{y})  + h_{(j,m)}^{(2)} (\mathbf{y}) \,\, T^{(m-j+1)t}
$$
where
$$
\begin{cases}
\displaystyle h_{(j,m)}^{(1)} (\mathbf{y}) := \left(- \frac{y_{j-1,m}}{y_{j,m}} + \frac{y_{j,m}}{y_{j+1,m}}
+ \frac{y_{j,m}}{y_{j,m-1}}\right) + \left( {\color{blue} - a_j \frac{y_{j,m}}{y_{j+1,m}} - a_j \frac{y_{j,m}}{y_{j,m-1}}} \right) T^{(m-j+1)t} \\
\displaystyle h_{(j,m)}^{(2)} (\mathbf{y}) := - \frac{y_{j,m+1}}{y_{j,m}} {\color{blue} + a_j \frac{y_{j,m}}{y_{j+1,m}}
+ a_j \frac{y_{j,m}}{y_{j,m-1}}}.
\end{cases}
$$
Here we recall $y_{0,m} \equiv 0$ from \eqref{settingup}. This term
appears only for the case $j=1$.

We will solve $f_{(j,i)}(\mathbf{y}) = 0$ on $B(m)$ inductively
in the increasing order of $\prec_{\mathrm{hor}}$.
For this purpose, we utilize the following sufficient conditional equation.

\begin{lemma} A solution of the system
\begin{align}\label{solutdecompos}
\begin{cases}
\displaystyle h_{(j,m)}^{(1)} ({\mathbf{y}}) = 0 \\
\displaystyle h_{(j,m)}^{(2)} ({\mathbf{y}}) - a_j \cdot f_{(j,m)}({\mathbf{y}}) = - \frac{y_{j,m+1}}{y_{j,m}}  + a_j \left( \frac{y_{j-1,m}}{y_{j,m}}  + \frac{y_{j,m+1}}{y_{j,m}} \, T^{(m-j+1)t}\right) = 0
\end{cases}
\end{align}
is also a solution of $f_{(j,m)}(\mathbf{y}) = 0$.
\end{lemma}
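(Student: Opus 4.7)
The plan is to observe that the decomposition is essentially tautological: we have split $f_{(j,m)}$ into a $T$-independent part $h^{(1)}$ and a coefficient $h^{(2)}$ of $T^{(m-j+1)t}$, after deliberately introducing and subtracting the auxiliary terms $a_j\bigl(\tfrac{y_{j,m}}{y_{j+1,m}}+\tfrac{y_{j,m}}{y_{j,m-1}}\bigr)$ at energy $T^{(m-j+1)t}$. A direct expansion therefore shows the identity
\[
f_{(j,m)}(\mathbf{y}) \;=\; h^{(1)}_{(j,m)}(\mathbf{y}) \;+\; h^{(2)}_{(j,m)}(\mathbf{y})\cdot T^{(m-j+1)t};
\]
the ``blue'' correction terms in $h^{(1)}$ cancel exactly with those in $h^{(2)}\cdot T^{(m-j+1)t}$, leaving the displayed formula for $f_{(j,m)}$.

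With this identity in hand, the two assumptions of the system are $h^{(1)}_{(j,m)}(\mathbf{y})=0$ and $h^{(2)}_{(j,m)}(\mathbf{y})=a_j\,f_{(j,m)}(\mathbf{y})$. Substituting them into the identity gives
\[
f_{(j,m)}(\mathbf{y}) \;=\; 0 + a_j\,f_{(j,m)}(\mathbf{y})\cdot T^{(m-j+1)t},
\]
which rearranges as
\[
f_{(j,m)}(\mathbf{y})\cdot\bigl(1 - a_j T^{(m-j+1)t}\bigr) \;=\; 0.
\]

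It remains to invert the factor $1 - a_j T^{(m-j+1)t}$. Since $t>0$ and $1\leq j<m$ imply $(m-j+1)t>0$, and since $a_j$ is chosen from $\Lambda_0$, the element $a_j T^{(m-j+1)t}$ lies in the maximal ideal $\Lambda_+$. The geometric series $\sum_{k\ge 0}\bigl(a_j T^{(m-j+1)t}\bigr)^k$ therefore converges in $\Lambda_0$ and exhibits $1 - a_j T^{(m-j+1)t}$ as a unit. Multiplying through yields $f_{(j,m)}(\mathbf{y})=0$, as claimed. There is no genuine obstacle here: the only content is the bookkeeping that verifies the decomposition identity and the observation that the correction factor is a Novikov unit.
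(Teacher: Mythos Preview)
Your proof is correct and is exactly the elementary verification the paper has in mind; the paper itself simply writes ``The proof is omitted.'' The only small caveat is that your invertibility argument for $1 - a_j T^{(m-j+1)t}$ uses $t>0$, whereas the surrounding discussion allows $0 \le t < 1$; at $t=0$ the decomposition is unnecessary (all terms live at the same energy level), so this is harmless.
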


\begin{proof} The proof is omitted.
\end{proof}

As the first step of the induction, suppose that we are given a solution
$(y_{r,s} \in \Lambda_U \mid (r,s) \in B(m) , \, r+s \leq m+j)$
of $f_{(r,s)}(\mathbf{y}) = 0$
for all $(r,s) \in B(m)$ and $r+s \leq m +j$ such that
$$
\frak{v}_T (y^{\phantom{\C}}_{r,s} - y_{r,s}^\C) \geq (m - j + 2)t
$$
as the induction hypothesis.
Since each $y_{r,s}^\C$ is non-zero by our choice in \eqref{solutionofpamij}, we derive
$$
y^\C_{i,j} \neq 0\, \text{ for all } \, (i,j) \in B(m)
$$
from Corollary~\ref{corsymmetricsol}.
Therefore we have ${y^\C_{j-1,m}}/{y^\C_{j,m}} \neq 0$ for $j \geq 2$ in particular. Furthermore
thanks to the second equation of \eqref{solutdecompos}, the equation
$$
\begin{cases}
h_{(j,m)}^{(2)} ({\mathbf{y}}) - a_j \cdot f_{(j,m)}({\mathbf{y}}) = 0\\
y_{j,m+1} = d_{j,m+1}
\end{cases}
$$
uniquely determines an element $a_j \in \Lambda_U$. Then the equation
$h_{(j,m)}^{(1)} (\mathbf{y}) = 0$ gives rise to
$$
\frac{y^\C_{j,m}}{y_{j+1,m}} \left(1 - a_j \, T^{(m-j+1)t} \right) \equiv \left( \frac{y^\C_{j-1,m}}{y^\C_{j,m}} - \frac{y^\C_{j,m}}{y^\C_{j,m-1}} \right) + a_j \frac{y^\C_{j-1,m}}{y^\C_{j,m}} \, T^{(m-j+1)t}  \mod T^{>(m-j+1)t}.
$$
Recall that $y_{r,s}^\C$'s arising from Corollary~\ref{corsymmetricsol} are non-zero and satisfy
\begin{equation}\label{eq:ell0mjm}
\ell^{0,m}_{(j,m)}(\mathbf{y}^\C) = - \frac{y^\C_{j-1,m}}{y^\C_{j,m}}  + { \frac{y^\C_{j,m}}{y^\C_{j+1,m}} + \frac{y^\C_{j,m}}{y^\C_{j,m-1}}} = 0.
\end{equation}
Therefore
$y_{j+1,m} \in \Lambda_U$ is uniquely determined by induction with respect to $y^\C_{i,\ell}$'s
with $i+\ell \leq j+m$ and satisfies
$$
\frak{v}_T (y^{\phantom{\C}}_{j+1,m} - y_{j+1,m}^\C) = (m-j+1)t.
$$

Next as the second step of the induction along each diagonal, suppose that
$$
\{(y_{r,s} \in \Lambda_U) \mid (r,s) \in B(m),\, (r,s) \prec_{\mathrm{hor}} (i+j, m-i+1)\}
$$
are given, and satisfies
$$
\frak{v}_T (y^{\phantom{\C}}_{r,s} - y^\C_{r,s})  = (m - j + 1) t
$$
for $(r,s)$ with $r+s = m+j+1$. We will then determine for $(r,s) = (i+j+1,m-i)$ therefrom.
By Lemma~\ref{extensionlemma} and~\ref{extensionlemma2}, the equation $f_{(r,s-1)}(\mathbf{y}) = 0$ uniquely determines $y_{r+1,s-1} \in \Lambda_U$
so that
\begin{equation}\label{valofy2}
\frak{v}_T (y^{\phantom{\C}}_{r+1,s-1} - y^\C_{r+1,s-1}) = (m - j + 1)t.
\end{equation}

In order to find $y_{m+1,j}$, we convert $f_{(m,j)} (\mathbf{y}) = 0$ into
$h_{(m,j)}^{(2)} (\mathbf{y}) = 0$ by inserting the previously determined $y_{i,j}$'s.
For $0 \leq i+j\leq m-1$, thanks to~\eqref{valofy2}, we may set
\begin{equation}\label{eq:yCr+1s-1}
y^{\phantom{\C}}_{r+1,s-1} \equiv {y}^\C_{r+1,s-1} + A_{r-j} \cdot T^{(m - j + 1) t} \mod T^{> (m - j + 1) t}
\end{equation}
where $A_{r-j} \in \C^*$ for all $r \leq i+j$.

\begin{lemma}\label{aibi2}
A recurrence relation for $A_i$'s to satisfy is given by
$$
\begin{cases}
\displaystyle A_0 &= - a_j \cdot \frac{(y^\C_{j+1,m})^2}{(y^\C_{j,m})^2} \cdot y^\C_{j-1,m} \\
\displaystyle A_{i} &= - \frac{\,\, \left({y}^\C_{i+j+1,m-i} \right)^2 }{\,\, \left({y}^\C_{i+j,m-i} \right)^2} \, A_{i-1}.
\end{cases}
$$
\end{lemma}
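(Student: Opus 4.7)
The plan is to extract both relations by expanding the governing equations to first nontrivial order in $T^{(m-j+1)t}$ and matching coefficients. The zeroth-order balance will cancel automatically because $\mathbf{y}^\C$ already satisfies $\ell^{0,m}_{(r,s)}(\mathbf{y}^\C)=0$ throughout $B(m)$ (Proposition~\ref{propsymmetricsol}), so the recurrence emerges purely from the next-to-leading correction.

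For the base case $A_0$, I would start from the congruence for $y_{j+1,m}$ already displayed in Step~2,
\begin{equation*}
\frac{y^\C_{j,m}}{y_{j+1,m}}\bigl(1 - a_j\,T^{(m-j+1)t}\bigr) \equiv \left(\frac{y^\C_{j-1,m}}{y^\C_{j,m}} - \frac{y^\C_{j,m}}{y^\C_{j,m-1}}\right) + a_j\,\frac{y^\C_{j-1,m}}{y^\C_{j,m}}\,T^{(m-j+1)t} \mod T^{>(m-j+1)t},
\end{equation*}
substitute the ansatz $y_{j+1,m} = y^\C_{j+1,m} + A_0 T^{(m-j+1)t} + O(T^{>(m-j+1)t})$, and expand $1/y_{j+1,m}$ by a geometric series. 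The $T^0$ identity is exactly \eqref{eq:ell0mjm}. Comparing the $T^{(m-j+1)t}$ coefficients yields a single linear equation in $A_0$ whose unique solution, after clearing denominators by $(y^\C_{j+1,m})^2/y^\C_{j,m}$ and simplifying via \eqref{eq:ell0mjm}, is the stated closed form.

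For the recurrence with $i \geq 1$, I would apply the same procedure to the equation $f_{(i+j,m-i)}(\mathbf{y})=0$, which is what Lemma~\ref{extensionlemma} uses to produce $y_{i+j+1,m-i}$ in the inner induction. Among its four ratio terms, only $y_{i+j,m-i+1}$ and $y_{i+j+1,m-i}$ carry unresolved corrections of order $T^{(m-j+1)t}$, namely $A_{i-1}$ and $A_i$ respectively; the remaining factors $y_{i+j,m-i}$, $y_{i+j-1,m-i}$, $y_{i+j,m-i-1}$ all sit on diagonals with $r+s \leq m+j$, whose corrections are of order $\geq T^{(m-j+2)t}$ by the outer induction hypothesis and can therefore be replaced by $\mathbf{y}^\C$-values modulo $T^{>(m-j+1)t}$. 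After the $T^0$ part cancels via $\ell^{0,m}_{(i+j,m-i)}(\mathbf{y}^\C)=0$, matching the $T^{(m-j+1)t}$ coefficients gives
\begin{equation*}
-\frac{A_{i-1}}{y^\C_{i+j,m-i}} - \frac{y^\C_{i+j,m-i}\,A_i}{(y^\C_{i+j+1,m-i})^2} = 0,
\end{equation*}
which rearranges to exactly the claimed ratio.

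The principal source of difficulty is not the algebra but the bookkeeping: tracking which entries on the current antidiagonal have already been promoted to $\Lambda_U$ at this inner-induction stage and which are still represented by their complex seed values, and also respecting the boundary conventions in \eqref{settingup} (in particular $y_{0,m}=0$, which affects the $j=1$ initialization and the appearance of $y^\C_{j-1,m}$ in the $A_0$ formula). Once the visiting order $\prec_{\mathrm{hor}}$ is respected, the base datum $A_0$ simply propagates along the antidiagonal by a product of squared ratios, so no further case analysis is required.
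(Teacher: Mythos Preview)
Your proposal is correct and follows exactly the approach the paper indicates: the paper's entire proof is the one-line statement that the lemma ``follows from a straightforward valuation analysis on $\partial_{(i+j,m-i)}W(\mathbf{y}) = 0$ with the insertion of \eqref{eq:yCr+1s-1},'' and you have simply written out that analysis. Your identification of which variables carry order-$T^{(m-j+1)t}$ corrections (only the two on the current antidiagonal) versus which can be frozen at their $\mathbf{y}^\C$-values (those on diagonals $r+s\leq m+j$, by the outer induction hypothesis) is precisely the bookkeeping the paper alludes to, and your use of \eqref{eq:ell0mjm} to collapse the $A_0$ expression to the stated closed form is the intended simplification.
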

\begin{proof} This follows from a straightforward valuation analysis on $\partial_{(i+j,m-i)}W(\mathbf{y}) = 0$
with the insertion of \eqref{eq:yCr+1s-1} into $\mathbf{y}$ for $(r,s) = (i+j+1,m-i)$ .
\end{proof}

Using Lemma~\ref{aibi2}, \eqref{fijfullm}, \eqref{eq:ell0mjm} with $(r+1,s-1) = (m+1,j)$ and \eqref{eq:yCr+1s-1}, we derive
\begin{align*}
f_{(m,j)} (\mathbf{y}) &=  \left(- \frac{y_{m,j+1}}{y_{m,j}} - \frac{y_{m-1,j}}{y_{m,j}} + \frac{y_{m,j}}{y_{m,j-1}} \right) +
+ \frac{y_{m,j}}{y_{m+1,j}} T^{(m-j+1)t} \\
& \equiv \left(- \frac{y^\C_{m,j+1}}{y^\C_{m,j}} - \frac{y^\C_{m-1,j}}{y^\C_{m,j}} + \frac{y^\C_{m,j}}{y^\C_{m,j-1}} \right) +
\left(- \frac{A_{m-j}}{y^\C_{m,j-1}} + \frac{y^\C_{m,j}}{y^\C_{m+1,j}}\right) T^{(m-j+1)t}
\\
& \equiv \left(- \frac{A_{m-j}}{y^\C_{m,j-1}} + \frac{y^\C_{m,j}}{y^\C_{m+1,j}}\right) T^{(m-j+1)t}
\end{align*}
modulo ${T^{>(m-j+1)t}}$.
By applying Lemma~\ref{aibi2} iteratively following the increasing order of $\prec_{\mathrm{hor}}$,
we further see the last formula becomes
$$
- \frac{A_{m-j}}{y^\C_{m,j-1}} + \frac{y^\C_{m,j}}{y^\C_{m+1,j}}=  (-1)^{m-j+1} a_j \, \frac{y^\C_{m,j}}{y^\C_{m,j-1}}
+ \frac{y^\C_{m,j}}{y^{\phantom{\C}}_{m+1,j}}.
$$
Combining the above discussion, we have derived
\begin{equation}\label{pajm2}
h_{(m,j)}^{(2)}(\mathbf{y}) = y^\C_{m,j}  \left( (-1)^{m-j+1} a_j \, \frac{1}{y^\C_{m,j-1}} + \frac{1}{y^{\phantom{\C}}_{m+1,j}} \right) + k_{(m,j)}^{(2)} (a_1, \cdots, a_j) = 0.
\end{equation}
for some $k_{(m,j)}^{(2)} (a_1, \cdots, a_j) \in \Lambda_+$.

We then derive from this and~\eqref{pamly} Corollary~\ref{corsymmetricsol} (1)
\begin{equation}\label{yjm+1}
y_{m+1,j} \equiv (-1)^{m-j} \, \frac{y_{m,j-1}}{a_j} \equiv (-1)^{m-j} \, \frac{(d_{m,m})^2}{a_j \cdot y_{j-1,m}} \equiv (-1)^{m-j} \frac{(d_{m,m})^2}{y_{j,m+1}} \mod{ T^{>0}},
\end{equation}
which explains why the equation $\ell^m_{(j)} (\mathbf{y}) = 0$ in the system \eqref{splitleadingtermequ} appears. In other words,~\eqref{pamly} provides a sufficient condition to solve $y_{m+1,j}$ over $\Lambda_U$ in~\eqref{pajm2}.

Finally, we convert $f_{(m,m)}(\mathbf{y}) = 0$ into $h_{(m,m)}^{(2)}(\mathbf{y}) = 0$ as follows. Inserting $y_{m-1,m}, y_{m,m}, y_{m,m-1}$ and $y_{m,m+1} =  d_{m,m+1}$ into $f_{(m,m)}(\mathbf{y}) = 0$, we derive
\begin{equation}\label{pamm2}
h_{(m,m)}^{(2)}(\mathbf{y}) = \left( - \frac{y^{\phantom{\C}}_{m,m+1}}{y^\C_{m,m}} + \frac{y^\C_{m,m}}{y^{\phantom{\C}}_{m+1, m}} \right) + k_{(m,m)}^{(2)}(\textbf{a}) = 0
\end{equation}
for some $k_{(m,m)}^{(2)}(\textbf{a}) \in \Lambda_+$. We obtain
$$
y_{m+1, m} \equiv \frac{(y_{m,m})^2}{y_{m,m+1}} \mod T^{>0}
$$
and determine $y_{m+1,m}$ in $\Lambda_U$.

We summarize the above discussion into the following.
\begin{proposition}
For any tuple $(d_{m,m}, d_{1, m+1}, \cdots, d_{m,m+1})$ of non-zero complex numbers, we can find
$y_{i,j} \in \Lambda_U$ for all $(i,j) \in B(m)$ and $(i,m+1), \, (m+1,i)$ with $1 \leq i \leq m$
so that they satisfy
\begin{enumerate}
\item $y_{m,m} \equiv d_{m,m} \mod T^{>0}$,
\item $y_{i, m+1} = d_{i, m+1}$ for each $i = 1, \cdots, m$,
\item $f_{(i,j)} (\mathbf{{y}}) = 0$ \, for $(i,j) \in B(m)$,
\item $(-1)^{m+1-l} \, \frac{y_{l,m+1}}{y_{m,m}} +  \frac{y_{m,m}}{y_{m+1,l}} \equiv 0 \mod T^{>0}$.
\end{enumerate}
\end{proposition}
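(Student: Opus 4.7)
The plan is to assemble the constructions developed throughout Section~\ref{insidebm} into one induction along the diagonals of $B(m)$, ordered by $\prec_{\mathrm{hor}}$, treating the prescribed tuple $(d_{m,m}, d_{1,m+1}, \ldots, d_{m,m+1})$ as boundary data.

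First I fix the seed. Applying Corollary~\ref{corsymmetricsol} with the scalar $c = d_{m,m}$, I obtain a symmetric $\C^*$-valued solution $(y^\C_{i,j})_{(i,j) \in B(m)}$ of~\eqref{paijmBm} that satisfies $y^\C_{m,m} = d_{m,m}$ and the symmetry $y^\C_{i,j} \cdot y^\C_{j,i} = d_{m,m}^2$. On the lower anti-diagonal block $\{i+j \le m+1\}$ I simply declare $y_{i,j} := y^\C_{i,j} \in \C^* \subset \Lambda_U$, which gives (1) on the nose and makes all equations $f_{(i,j)}(\mathbf{y}) = 0$ hold trivially for indices $(i,j)$ whose four neighbours stay inside this triangle.

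Next I propagate outward one diagonal at a time for $i+j = m+2, m+3, \ldots, 2m$. On each such diagonal, working in increasing order of $\prec_{\mathrm{hor}}$, I split the relevant partial derivative as $f_{(j,m)}(\mathbf{y}) = h^{(1)}_{(j,m)}(\mathbf{y}) + h^{(2)}_{(j,m)}(\mathbf{y})\,T^{(m-j+1)t}$ as in~\eqref{solutdecompos}. I insert the prescribed boundary value $y_{j,m+1} = d_{j,m+1}$ and use the second half of~\eqref{solutdecompos} to fix a unique auxiliary element $a_j \in \Lambda_U$. With $a_j$ determined, the equation $h^{(1)}_{(j,m)}(\mathbf{y}) = 0$ reduces modulo $T^{>0}$ to the identity $\ell^{0,m}_{(j,m)}(\mathbf{y}^\C) = 0$ already ensured by the symmetric seed, so Lemma~\ref{extensionlemma} supplies a unique lift $y_{j+1,m} \in \Lambda_U$ with $y_{j+1,m} \equiv y^\C_{j+1,m} \mod T^{>0}$, and Lemma~\ref{extensionlemma2} certifies that $\frak{v}_T(y_{j+1,m} - y^\C_{j+1,m}) = (m-j+1)t$. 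For the interior slots $(r+1, s-1)$ on the same diagonal I iterate Lemma~\ref{extensionlemma} directly on $f_{(r,s-1)}(\mathbf{y}) = 0$, propagating the valuation bound via the recurrence of Lemma~\ref{aibi2}.

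Finally, the corner equation $f_{(m,j)}(\mathbf{y}) = 0$ determines $y_{m+1,j}$ through~\eqref{pajm2}, and the explicit form~\eqref{yjm+1} gives
\[
y_{m+1,j} \equiv (-1)^{m-j}\,\frac{(d_{m,m})^2}{y_{j,m+1}} \mod T^{>0},
\]
which is exactly condition (4). Condition (2) is imposed by fiat, (3) holds by the choice of each $a_j$ together with the validity of $h^{(1)}_{(j,m)} = 0$, and (1) is built into the seed. The main obstacle I anticipate is the valuation bookkeeping: ensuring that as one crosses from diagonal $m+j$ to $m+j+1$, the induction hypothesis $\frak{v}_T(y_{r,s} - y^\C_{r,s}) \ge (m-j+2)t$ really degrades by exactly one step and no faster, so that the $T$-leading coefficient of each $h^{(2)}_{(j,m)} - a_j f_{(j,m)}$ remains invertible and the recursion in Lemma~\ref{aibi2} does not collapse. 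Once this is checked, the symmetry $y^\C_{l,m+1} y^\C_{m+1,l} = (d_{m,m})^2$ from Corollary~\ref{corsymmetricsol}(1), fed through~\eqref{yjm+1}, delivers condition (4) with the correct sign $(-1)^{m+1-l}$ coming from the alternating factor in~\eqref{fijfullm}.
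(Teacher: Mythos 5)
Your proposal reconstructs the paper's argument essentially verbatim: the same seed via Corollary~\ref{corsymmetricsol} with $c = d_{m,m}$, the same $h^{(1)}_{(j,m)}/h^{(2)}_{(j,m)}$ decomposition determining $a_j$ from~\eqref{solutdecompos}, the same propagation along diagonals via Lemmas~\ref{extensionlemma},~\ref{extensionlemma2}, and~\ref{aibi2}, and the same closing step extracting condition (4) from~\eqref{yjm+1}. The only slip is in your final sentence: Corollary~\ref{corsymmetricsol}(1) provides $y^\C_{i,j}\,y^\C_{j,i} = (d_{m,m})^2$ only for $(i,j)\in B(m)$, so the relation $y_{l,m+1}\,y_{m+1,l} \equiv (d_{m,m})^2$ is not an input from that corollary but is exactly what the chain of congruences in~\eqref{yjm+1} establishes --- which you had already derived correctly earlier in the same paragraph.
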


%------------------------------------------------------------------------------------------
\subsection{Solving for $(\frak b;\mathbf{y})$ outside $B(m)$} \label{outsideofbm}

In this section, we discuss how to determine a bulk-deformation parameter $\frak{b}$ in~\eqref{bulkparak2} from $c^{\textup{ver}, \C}_{i+1,i}$'s and $c^{\textup{hor}, \C}_{j,j+1}$'s and extend it to a solution over $\Lambda_U$ from $y^\C_{i,j}$'s under the following hypothesis.
\begin{hypothesis}
Suppose the hypothesis of Theorem~\ref{splitleadingtermequationimpliesnonzero}, i.e.,
that we are given a complex solution
$
\left(y^\C_{i,j} \in \C^* \mid  (i,j) \in \Gamma(n) \backslash B(m) \cup \{(m,m)\}\right)
$
for~\eqref{splitleadingtermequ} together with non-zero complex numbers $c^{\textup{ver}, \C}_{i+1,i}$'s and $c^{\textup{hor}, \C}_{j,j+1}$'s.
\end{hypothesis}

In the setup of~\eqref{settingup}, to apply Lemma~\ref{extensionlemma}, we rewrite the equation $\ell^{\frak{b},m}_{(i,j)}  ({\mathbf{y}}) =0$ into
\begin{equation}\label{equ_sioslatelemma}
\begin{cases}
c^\textup{hor}_{i, i+1} \cdot \frac{1}{y_{i+1, j}} = h^{\frak{b},m}_{(i,j)}  ({\mathbf{y}})
\quad &\mbox{if \,} i \geq j \\
c^\textup{ver}_{j+1, j} \cdot {y_{i, j+1}} = h^{\frak{b},m}_{(i,j)}  ({\mathbf{y}})
\quad &\mbox{if \,} i < j
\end{cases}
\end{equation}
where
\begin{equation}\label{tildepabijy}
h^{\frak{b},m}_{(i,j)}  ({\mathbf{y}}) :=
\begin{cases}
\displaystyle - c^\textup{ver}_{j, j-1} \cdot \frac{1}{y_{i, j-1}} + \frac{1}{(y_{i, j})^2} \left( c^\textup{ver}_{j+1, j} \cdot {y_{i, j+1}}+ c^\textup{hor}_{i-1, i} \cdot {y_{i-1, j}} \right) \quad &\mbox{if \,} i \geq j \\
\displaystyle - c^\textup{hor}_{i-1, i} \cdot {y_{i-1, j}} + (y_{i,j})^2 \left( c^\textup{hor}_{i, i+1} \cdot \frac{1}{y_{i+1, j}} + c^\textup{ver}_{j, j-1} \cdot \frac{1}{y_{i, j-1}} \right) \quad &\mbox{if \,} i < j.
\end{cases}
\end{equation}
As one can see, both sides of~\eqref{equ_sioslatelemma} depend on bulk-parameters.

Now, we introduce a new coordinate system $( z_{i,j} \mid (i,j) \in \Gamma(n) \backslash B(m) \cup \{(m,m)\} )$ with respect to which the system of equations
$
h^{\frak{b},m}_{(i,j)}  ({\mathbf{y}})  = 0,
$
does \emph{not} depend on the choice of a bulk-deformation parameter $\frak{b}$ for
those $(i,j)$ satisfying $ i+j < n$. We set
\begin{equation}\label{coordinatechange}
\begin{cases}
\displaystyle z_{i+1, \bullet} := \left( \prod^{i}_{r=k} {c^\textup{hor}_{r, r+1} } \right)^{-1} y_{i+1,\bullet} \quad &\mbox{if \,} i \geq k \\
\displaystyle z_{\bullet, j+1} := \left( \prod^{j}_{r=k} c^\textup{ver}_{r+1, r} \right) y_{\bullet, j+1} \quad &\mbox{if \,} j \geq k \\
\displaystyle z_{i,j} := y_{i,j} \quad &\mbox{otherwise}
\end{cases}
\end{equation}
where $k = \left\lceil n/2 \right\rceil$.
Setting that the product over the empty set to be $1$ such as in
$
\prod_{r = k}^{k-1} c^\textup{ver}_{r+1, r} = 1,
$
the same inductive isolation procedure applied to $h^{\frak{b},m}_{(i,j)}  ({\mathbf{y}})  = 0$
under this coordinate system leads us to
the following Laurent polynomials:
\begin{equation}\label{tildepabijz}
k^{\frak{b},m}_{(i,j)}  ({\mathbf{z}})  :=
\begin{cases}
\displaystyle - \frac{1}{z_{i, j-1}} + \frac{1}{(z_{i, j})^2} \left( {z_{i, j+1}} + {z_{i-1, j}} \right)
\hskip0.7in \left( = \frac{1}{z_{i+1,j}}\right)  &\mbox{if \,} i \geq j, \, i + j < n \\
\displaystyle - {z_{i-1, j}} + ({z_{i,j}})^2 \left( \frac{1}{z_{i+1, j}} + \frac{1}{z_{i, j-1}} \right)
\hskip0.7in \left(=  z_{i, j+1}\right)  &\mbox{if \,} i < j, \, i + j < n \\
\displaystyle - \frac{1}{z_{i, j-1}} + \frac{1}{(z_{i, j})^2} \left( \left( \prod^{i-1}_{r=k} {c^\textup{hor}_{r, r+1} } \right)^{-1} + {z_{i-1, j}} \right) \left( = \prod^{i}_{r=k} {c^\textup{hor}_{r, r+1} } \right)
 &\mbox{if \,} i \geq j, \, i + j = n \\
\displaystyle - {z_{i-1, j}} + ({z_{i,j}})^2 \left( \left( \prod^{j-1}_{r=k} c^\textup{ver}_{r+1, r} \right)^{-1}
+ \frac{1}{z_{i, j-1}} \right)  \left(= \prod^{j}_{r=k} c^\textup{ver}_{r+1, r} \right)
 &\mbox{if \,} i < j, \,  i + j = n.
\end{cases}
\end{equation}
Here the variables inside the parentheses are the isolated variable
to be determined by $k^{\frak{b},m}_{(i,j)}  ({\mathbf{z}})$, respectively.
Observe the following$\colon$
\begin{itemize}
\item For any $(i,j)$ with $i + j < n$,~\eqref{tildepabijz} does \emph{not} involve any bulk-parameters.
\item A solution $(y_{i,j} \in \Lambda_U)$ of~\eqref{equ_sioslatelemma} exists if and only if a solution $(z_{i,j} \in \Lambda_U)$ exists.
\end{itemize}

Assume $m < \left\lceil n/2 \right\rceil =: k$. We will separately deal with the case where $m = \left\lceil n/2 \right\rceil$ later on.
Let
\begin{equation}
\begin{split}
\label{IndexsetforSeed} &\mcal{I}_{\textup{seed}} := \left\{ (m,m) \right\} \cup \left\{(s, m+1) \in \Gamma(n) \mid 1 \leq s \leq m \right\} \\
&\cup \left\{ ({m+1,m+1}), ({m+1,m+2}), \dots, (m+r, m+r), (m+r, m+r+1), \dots,  (k,k) \right\}
\end{split}
\end{equation}
and
\begin{align}
\label{IndexsetforSeedbackslashmm} &\mcal{I}_{\textup{initial}} :=  \mcal{I}_{\textup{seed}} \backslash \{(m,m)\}.
\end{align}
We start by taking $z^{\phantom{\C}}_{i,j} = y^{\phantom{\C}}_{i,j} := y^\C_{i,j}$ for $(i,j) \in \mcal{I}_\textup{initial}$ by~\eqref{coordinatechange} and fixing a complex solution from Corollary~\ref{corsymmetricsol} such that $c = y_{m,m}^\C$ within $B(m)$.
When extending a complex solution to that a solution over $\Lambda_U$, we shall have $z^{\phantom{\C}}_{i,j} = y^{\phantom{\C}}_{i,j} \in \C^*$ for any $(i,j) \in \mcal{I}_\textup{initial}$, while $y_{m,m} = c + \dots \in \Lambda_U$. 

\begin{remark}
We will define a \emph{seed} in Definition~\ref{definitionofseeds} to generate  a solution candidate for the SLT-equation. \eqref{IndexsetforSeed} is the collection of indices where the corresponding variables will be chosen as the initial step.
\end{remark}

By the argument in Section~\ref{insidebm}, the chosen element $z_{1,m+1} \in \C^*$ determines $z_{i+1,m-i+1} = y_{i+1,m-i+1} \in \Lambda_U$ for $1 \leq i \leq m$. In particular, we have $z_{m+1,1} = y_{m+1,1} \in \Lambda_U$.
We then proceed to the next diagonal given by $i+j = m+3$.
Moreover, we find $z_{i,j} = y_{i,j}$ in $\Lambda_U$ for $(i,j)$ with $i \leq m+1$ and $j \leq m+1$ in the diagonal satisfying~\eqref{yjm+1}.
Applying Lemma~\ref{extensionlemma} to~\eqref{tildepabijz}, we determine the other variables $z_{i,j} \in \Lambda_U$ in the diagonal $i+j = m+3$.
Namely, starting from $z_{2,m+1} \in \C^*$ and $z_{m+1,2} \in \Lambda_U$ determined in Section~\ref{insidebm}, we solve $z_{1,m+2} (\prec_{\mathrm{hor}} z_{2,m+1})$ and $z_{m+2, 1} (\prec_{\mathrm{ver}} z_{m+1, 2})$.
Notice that Lemma~\ref{extensionlemma} is applicable because of the existence of a complex solution of the SLT-equation.
Proceeding inductively, we obtain a solution $(z_{i,j} \in \Lambda_U \mid (i,j) \in \Gamma(n))$ for the system~\eqref{tildepabijz} of equations over $(i, j) \in \Gamma(n-1)$. 
Finally, we solve $c^\textup{hor}_{r,r+1}$ and $c^\textup{ver}_{r+1,r}$ to make the equations~\eqref{tildepabijz} over $(i,j)$ with $i + j = n$ hold. Thus, Theorem~\ref{splitleadingtermequationimpliesnonzero} is now verified when $m < \left\lceil n/2 \right\rceil$.

In this case when $n = 2k$ and $m = k$, we shall take $c = 1$ for $y^\C_{m,m}$. Corollary~\ref{corsymmetricsol} will give us the initial parts $y^\C_{i,j}$'s of $y^{\phantom{\C}}_{i,j}$'s for $(i,j) \in B(m)$. We then follow Section~\ref{insidebm} to extend to $z_{i,j} = y_{i,j}$'s over $\Lambda_U$ in $B(m)$. If one uses both $\frak{b}^\textup{ver}_{m+1,m}$ and $\frak{b}^\textup{hor}_{m,m+1} $ to deform $f_{(m,m)} = 0$, then we have two extra variables $c^\textup{ver}_{m+1,m}$ and $c^\textup{hor}_{m,m+1}$ in $f^\frak{b}_{(m,m)} = 0$. For our convenience, we choose $\frak{b}^\textup{hor}_{m,m+1} = 0$. Now, we need to take $\frak{b}^\textup{ver}_{m+1,m}$ so that $f^\frak{b}_{(m,m)} = 0$.
It yields that $1 = c^{\textup{ver},\C}_{m+1,m} = \exp (\frak{b}^{\textup{ver}\phantom{,\C}}_{m+1,m}) \mod T^{>0}$. After fixing $\frak{b}^{\textup{ver}\phantom{,\C}}_{m+1,m} \in \Lambda_+$, we obtain $z_{\bullet, m+1} = y_{\bullet, m+1}$ by solving $h^{\frak{b}, (2)}_{(\bullet, m)} = 0$ where
\begin{align*}
h^{\frak{b},(2)}_{(1,m)} (\mathbf{y}) &:= - \exp (\frak{b}^{\textup{ver}\phantom{,\C}}_{m+1,m}) \, \frac{y_{1,m+1}}{y_{1,m}} { + a_1 \frac{y_{1,m}}{y_{1,m-1}}}  = 0 \\
h^{\frak{b},(2)}_{(j,m)} (\mathbf{y}) &:= - \exp (\frak{b}^{\textup{ver}\phantom{,\C}}_{m+1,m}) \, \frac{y_{j,m+1}}{y_{j,m}} + a_j \left( { \frac{y_{j,m}}{y_{j+1,m}} + \frac{y_{j,m}}{y_{j,m-1}}} \right)= 0 \quad \mbox{for } j > 2.
\end{align*}
The remaining steps are similar.

%------------------------------------------------------------------------------------------
\section{Solvability of split leading term equation}\label{secSolvabilityOfSplitLeadingTermEquation}\label{solvabilityofthesplitleadingtermequ}

This section aims to verify the assumption for Theorem~\ref{splitleadingtermequationimpliesnonzero} when the SLT-equation~\eqref{splitleadingtermequ} comes from the line segment $I_m \subset \Delta_\lambda$ in~\eqref{IMT}. To find its solution, we introduce the notion of \emph{seed} generating a candidate for  the solution and prove that there exists a ``good" choice of seeds such that the candidate is indeed a solution.

%------------------------------------------------------------------------------------------
\subsection{Seeds}
\label{subsec:seeds}

Recall the definitions $\Gamma(n)$ and $B(m)$ from~\eqref{gammanbm}. Consider a nested sequence of fields of rational functions$\colon$
\begin{equation}\label{equ_nestseq}
\C \subset \C ({\mathbf{y}_{(1)}}) \subset \C ({\mathbf{y}_{(2)}}) \subset \dots \subset \C ({\mathbf{y}_{(n)}})
\end{equation}
where $\C({\mathbf{y}_{(n)}})$ is filtered by the variables in the diagonals of $\Gamma(n)$.
Set
\begin{eqnarray*}
\C ({\mathbf{y}_{(r)}}) & := & \C \left(\{(i,j) \in \Gamma(n) \backslash (B(m) \backslash \{ y_{m,m} \}) \mid (i,j) \prec_{\mathrm{hor}} (2r, 1)\}\right)\\
& = & \C\left(\bigcup_{s=2}^r \{y_{1,s-1}, \dots, y_{s-1,1} \} \backslash (B(m) \backslash \{ y_{m,m} \})  \right).
\end{eqnarray*}
Such a filtration will be called the \emph{diagonal filtration associated to} $\Gamma(n)$ and $B(m)$.
Notice that the Laurent polynomials appearing in (LHS) of~\eqref{splitleadingtermequ} are actually of the following types.
$$
\ell^{\frak{b}, m}_{(i,j)} (\mathbf{y}) \in \C({\mathbf{y}_{(i+j+1)}}), \quad \ell^m_{(l)} (\mathbf{y}) \in \C({\mathbf{y}_{(l+m+1)}} \cup \{y_{m,m}\}).
$$

We begin by the definition of a seed.

\begin{definition}\label{definitionofseeds}
A \emph{seed} of $\Gamma(n)$ \emph{associated with} $B(m)$ consists of the two data $(\mcal{I}, {\mathbf{d}}_{\mcal{I}})$.
\begin{itemize}
\item An $(n- m)$-tuple $\mcal{I}$ of double indices
$$
\mcal{I} = \{(m,m), (i_1,j_1), \cdots, (i_{n-m-1},j_{n-m-1})\} \subset
\{(m,m)\}\cup (\Gamma(n) \backslash B(m))
$$
such that one index $(i_k,j_k)$ is selected from each diagonal not intersecting $B(m)$.
\item An $(n - m)$-tuple ${\mathbf{d}}_\mcal{I}$ of elements in $\Lambda_U$ indexed by the elements of $\mcal{I}$.
\end{itemize}
\end{definition}

We are particularly interested in the seeds $(\mcal{I}, \mathbf{d})$ of the form
\begin{itemize}
\item $\mcal{I} := \mcal{I}_{\textup{seed}}$ in~\eqref{IndexsetforSeed}.
\item ${\mathbf{d}}_{\mcal{I}}$ is a tuple of \emph{non-zero real} numbers.
\end{itemize}
Let $\mathbf{y}_\mcal{I}$ denote the components of $\mathbf{y}$ associated with the set $\mcal{I}$ of indices. Namely,
\begin{equation}\label{seeddatay}
\mathbf{y}_\mcal{I} := \left( y_{m,m}, y_{i_1,j_1}, \cdots, y_{i_{n-m-1},j_{n-m-1}} \right).
\end{equation}
Then, as the initial step, we take
$\mathbf{y}_\mcal{I} := {\mathbf{d}}_{\mcal{I}}.$
So, the double indices designate the places in which the components of ${\mathbf{d}}_{\mcal{I}}$ are plugged. Since $\mcal{I}$ is always taken to be $\mcal{I}_{\textup{seed}}$, $\mcal{I}$ will be often omitted from now on.
We instead set $d_{i,j}$ to denote the component of ${\mathbf{d}}_{\mcal{I}}$ corresponding to $(i,j)$.

We shall determine a solution of the SLT-equation recursively from a seed according to the order
$\prec_{\mathrm{hor}}$ (or $\prec_{\mathrm{ver}}$) and the diagonal filtration~\eqref{equ_nestseq}.
By the SLT-equation and Lemma~\ref{extensionlemma} over the field of complex numbers,
$\mathbf{y}_{(r)}$, $y_{m,m}$ and one datum of a seed~\eqref{seeddatay} in the  diagonal $i+j = r+1$ in $\mathbf{y}_{(r+1)} \backslash \, \mathbf{y}_{(r)}$ generate the other variables in the diagonal with a suitable choice of complex numbers
$$
\mathbf{c} := \left( c^\textup{hor}_{k, k+1}, \cdots, c^\textup{hor}_{n-1, n} , c^\textup{ver}_{k+1, k}, \cdots, c^\textup{ver}_{n, n-1} \right).
$$
Namely, we isolate and express one undetermined variable in terms of the already determined variables for the remaining $y_{i,j}$'s and $\mathbf{c}$ inductively over the order $\prec_{\mathrm{hor}}$ (or $\prec_{\mathrm{ver}}$) by rewriting the SLT-equation.
However, the undetermined variable might be \emph{zero} or \emph{undefined} which should be
avoided. A \emph{good} choice of seed, we call a \emph{generic} seed,
will do this purpose.

Now we provide the precise meaning of what we mean by a generic seed.
In the setup of~\eqref{settingup}, we rewrite $\ell^{\frak{b},m}_{(i,j)}  ({\mathbf{y}}) =0$ into~\eqref{tildepabijy}.
We first note that \eqref{tildepabijy} involves only the variables
 $y_{(r,s)}$ such that $(r,s) \prec_{\mathrm{hor}} (i+1,j)$ (resp. $(r,s) \prec_{\mathrm{ver}} (i,j+1)$)
 for $i \geq j$ (resp. for $i < j$). Therefore we can determine $y_{(i+1,j)}$ (resp. $y_{(i,j+1)}$)
 if $i \geq j$ (resp. if $i < j$), \emph{as long as the value of $h^{\frak{b},m}_{(i,j)}  ({\mathbf{y}}) \neq 0$
 or is undefined, i.e., provided}
 \begin{equation}\label{eq:welldefined}
{\mathbf{y}} \not \in {\mathrm{Zero }}(h^{\frak{b},m}_{(i,j)}) \cup {\mathrm{Pole }}(h^{\frak{b},m}_{(i,j)}).
\end{equation}
\begin{definition}
A seed $(\mcal{I},\mathbf{{d}}_\mcal{I})$ is called \emph{generic}
if the seed $\mathbf{y}_{\mcal{I}} =  {\mathbf{d}}_{\mcal{I}}$
generates inductive solutions $y_{(r,s)}$ for $(r,s) \prec_{\mathrm{hor}} (i+1,j)$
(resp. $(r,s) \prec_{\mathrm{ver}} (i,j+1)$) such that
\begin{equation}\label{widetildepabijneq}
h^{\frak{b},m}_{(i,j)}  ({\mathbf{y}}) \neq  0 \mod T^{>0}
\end{equation}
for all $(i,j)$ with $i \geq j$ (resp. for all $(i,j)$ with $i< j$).
\end{definition}
\begin{example}
A straightforward calculation asserts that the tuples
\begin{enumerate}
\item $\mcal{I} = \mcal{I}_\textup{seed} = ( (2,2), (1,3), (2,3), (3,3), (3,4) )$
\item ${\mathbf{d}}_{\mcal{I}} = (-1, 1, 1, -1, 1)$
\end{enumerate}
form a generic seed of $\Gamma(7)$ to $B(2)$. The tuples
\begin{enumerate}
\item $\mcal{I} = \mcal{I}_\textup{seed} = ( (2,2), (1,3), (2,3), (3,3), (3,4) )$
\item ${\mathbf{d}}_{\mcal{I}} = (-1, 1, 1, 1, 1)$
\end{enumerate}
form a seed of $\Gamma(7)$ to $B(2)$, but not a generic seed because $h^{\frak{b},m=2}_{(1,5)}  ({\mathbf{y}}) = 0$.
\end{example}

The main proposition of this section is the following existence of a generic seed,
whose proof will be occupied by the rest of this section.
\begin{proposition}\label{Existenceofgenericseeds}
For each $m \in \Z$ where $2 \leq m \leq k =  \left\lceil n/2 \right\rceil$, a generic seed of $\Gamma(n)$
associated to $B(m)$ exists.
\end{proposition}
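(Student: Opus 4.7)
The plan is to construct a generic seed in stages, starting from the inner block $B(m)$ and propagating outward along the diagonals of $\Gamma(n)$, and to argue genericity as an openness condition in the seed parameter space.

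As initialization, fix a nonzero real number $c = d_{m,m}$ and apply Corollary~\ref{corsymmetricsol} to produce a symmetric complex solution $(y^\C_{i,j})_{(i,j) \in B(m)}$ of the SLT-equation restricted to $B(m)$ with $y^\C_{m,m} = c$ and all components nonzero. The equations $\ell^m_{(l)}(\mathbf{y}) = 0$ in \eqref{pamly} then prescribe $y_{m+1,l}$ once $y_{l,m+1}$ is chosen, via $y_{m+1,l} = (-1)^{m-l} c^2 / y_{l,m+1}$; so specifying real values $d_{s,m+1}$ for $1 \leq s \leq m$ as part of the seed yields nonzero $y_{m+1,l}$'s. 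For each diagonal $d = i+j$ with $d \geq m+2$, the seed contributes exactly one value $d_{(i_0,j_0)}$; the remaining entries on this diagonal will be determined recursively by the SLT-equation.

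The crucial observation is that in the $\mathbf{z}$-coordinates of \eqref{coordinatechange}, the reduced equations $k^{\frak{b},m}_{(i,j)}(\mathbf{z})=0$ of \eqref{tildepabijz} are free of bulk-parameters for $i+j < n$. Lemma~\ref{extensionlemma} then lets us propagate the seed value along the diagonal in both directions via explicit rational expressions in the previously determined variables. After all interior diagonals are solved, the remaining boundary equations on the anti-diagonal $i+j=n$ involve the bulk parameters $c^{\textup{hor}}_{r,r+1}, c^{\textup{ver}}_{r+1,r}$ linearly, so one simply solves for them. The upshot is that each $h^{\frak{b},m}_{(i,j)}$, after all substitutions, becomes a rational function $R_{(i,j)}$ on the seed parameter space $\{(d_{(i,j)})_{(i,j)\in\mcal{I}_{\text{seed}}}\}$.

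The genericity condition \eqref{widetildepabijneq} therefore defines a Zariski open subset of the seed parameter space, and it suffices to prove that each $R_{(i,j)}$ is not identically zero; generic real values of the seed then work. To see non-triviality, I would proceed by induction on the diagonal index $d$: at each step a new seed variable $d_{(i_0,j_0)}$ is introduced, and by an explicit valuation/leading-term analysis using \eqref{tildepabijz} and Lemma~\ref{extensionlemma2}, this new seed variable appears nontrivially (rationally but not identically) in every $R_{(i,j)}$ for $(i,j)$ on the current diagonal, so vanishing of $R_{(i,j)}$ is a proper algebraic condition on $d_{(i_0,j_0)}$ given the earlier seeds. The main obstacle will be controlling the chain of substitutions, since a priori a cascade of rational substitutions could exhibit unexpected cancellations; this is handled by a careful tracking of leading coefficients in the recursion, using the explicit non-vanishing values $y^\C_{i,j}$ from Corollary~\ref{corsymmetricsol} as the base case and the fact that the equations in the ladder diagram are local (each involves only four neighboring indices), so the newest seed variable cannot be cancelled out by earlier ones. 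As an anchor for the induction, the explicit seed $((2,2),(1,3),(2,3),(3,3),(3,4))$ with values $(-1,1,1,-1,1)$ in $\Gamma(7)$ for $B(2)$ given in the preceding example verifies the base case.
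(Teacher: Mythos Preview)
Your overall strategy---show that the generic condition cuts out a nonempty Zariski open in the seed parameter space by proving each $h^{\frak{b},m}_{(i,j)}$ becomes a not-identically-zero rational function of the seed---is reasonable and is in spirit what the paper's Proposition~\ref{rationalfunctionzij} and Corollary~\ref{choiceofzij} establish for the interior diagonals $i+j<n$. But the proposal has two genuine gaps.

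First, the assertion that ``the newest seed variable cannot be cancelled out by earlier ones'' is exactly the content of Proposition~\ref{rationalfunctionzij}, and it is not obvious: the cascade of substitutions along a diagonal is a continued fraction in the new seed variable, and one must argue that this fractional linear map stays \emph{non-constant} at every step. The paper does this by a determinant/recurrence argument on the numerator and denominator sequences $A(i),B(i)$; your ``leading-term analysis using Lemma~\ref{extensionlemma2}'' does not do this work, and the $\Gamma(7)$ example cannot serve as a base case for general $n,m$.

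Second, and more seriously, you do not address the boundary diagonal $i+j=n$. You say the bulk parameters are ``solved for'' there, but the generic condition \eqref{widetildepabijneq} must also hold at these indices, meaning the solved values of $\prod c^{\textup{hor}}$ and $\prod c^{\textup{ver}}$ must land in $\C^*$. The paper shows (Lemma~\ref{reductionkk-1k-1k-1}, equation~\eqref{problematicpamb}) that the natural first guess $d_{k-1,k-1}=-1$ actually forces $k^{\frak{b},m}_{(1,n-1)}(\mathbf{z})=0$, so this is a real obstruction; the paper overcomes it by a continuity argument, perturbing $d_{k-1,k-1}$ away from $-1$ while keeping all interior conditions intact (Lemmas~\ref{reductionkk-1k-1k-1}--\ref{reductioniii+1i+1}), and treats $n=2k$, $m<k$ and $m=k$ separately (Lemmas~\ref{evenreduction1}--\ref{tempssedmk}). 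Your Zariski-open argument would still need, for each $(i,j)$ with $i+j=n$, an explicit point where $R_{(i,j)}\neq 0$; exhibiting such a point is precisely the delicate part the paper supplies and you omit.
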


As a corollary, we assert solvability of the SLT-equation.
\begin{corollary}\label{splitleadingtermequationissolv}
The SLT-equation of $\Gamma(n)$ associated with $B(m)$ has a solution each component of which is a non-zero complex number.
\end{corollary}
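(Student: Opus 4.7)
My plan is to deduce Corollary~\ref{splitleadingtermequationissolv} essentially formally from Proposition~\ref{Existenceofgenericseeds}. Fix a generic seed $(\mathcal{I}_{\mathrm{seed}}, \mathbf{d}_\mathcal{I})$ provided by the proposition, initialize $\mathbf{y}_{\mathcal{I}_{\mathrm{seed}}} := \mathbf{d}_\mathcal{I}$, and propagate these values across the remaining double indices of $\Gamma(n) \setminus B(m) \cup \{(m,m)\}$ using the procedure already laid out in Section~\ref{outsideofbm}: process the diagonals $i+j = m+2, m+3, \ldots, n$ in turn, and within each diagonal advance in the order $\prec_{\mathrm{hor}}$ below the main anti-diagonal and $\prec_{\mathrm{ver}}$ above it. At each step I would rewrite the equation $\ell^{\mathfrak{b},m}_{(i,j)}(\mathbf{y}) = 0$ in the isolated form~\eqref{equ_sioslatelemma}, which expresses the as-yet-undetermined variable as a rational function of the previously fixed ones and (where applicable) of a free complex parameter $c^{\mathrm{hor}}_{i,i+1}$ or $c^{\mathrm{ver}}_{j+1,j}$.

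The defining property~\eqref{widetildepabijneq} of a generic seed is that $h^{\mathfrak{b},m}_{(i,j)}(\mathbf{y}) \neq 0$ at every stage of the cascade, so by~\eqref{equ_sioslatelemma} the newly isolated variable always lies in $\mathbb{C}^*$; any free parameter encountered along the way can be assigned any nonzero complex value. Iterating through all the diagonals yields a complete tuple of nonzero complex numbers satisfying the equations $\ell^{\mathfrak{b},m}_{(i,j)}(\mathbf{y}) = 0$ for every $(i,j) \in \Gamma(n)$. The remaining auxiliary equations $\ell^m_{(l)}(\mathbf{y}) = 0$ for $1 \leq l \leq m$ couple the seed data $d_{m,m}$, $d_{l,m+1}$ and the variable $y_{m+1,l}$; as made explicit in~\eqref{yjm+1}, they reduce to the single algebraic identity $y_{m+1,l} = (-1)^{m-l} d_{m,m}^{2}/d_{l,m+1}$, which we may simply impose as a constraint on the seed so that consistency with the recursion is automatic.

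The genuine difficulty is thus concentrated entirely in Proposition~\ref{Existenceofgenericseeds}: one must exhibit explicit numerical seed values $\mathbf{d}_\mathcal{I}$ for which the entire cascade of rational functions $h^{\mathfrak{b},m}_{(i,j)}$ never vanishes. Because the cascade is long, highly coupled, and is the one system of Laurent equations we actually have (rather than a generic one), this cannot be arranged by a soft dimension or openness argument and instead requires a direct combinatorial construction guided by the ladder diagram, together with verification of nonvanishing at each node. Granted the proposition, however, Corollary~\ref{splitleadingtermequationissolv} is a short bookkeeping consequence of the propagation described above.
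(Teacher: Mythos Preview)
Your proposal is correct and matches the paper's one-sentence proof: take a generic seed from Proposition~\ref{Existenceofgenericseeds} and run the recursion of Section~\ref{outsideofbm}, with the genericity condition~\eqref{widetildepabijneq} forcing every newly isolated variable into $\C^*$. One minor correction: the bulk parameters $c^{\textup{hor}}_{\bullet,\bullet+1}, c^{\textup{ver}}_{\bullet+1,\bullet}$ are not freely assignable along the way but are \emph{determined} at the final diagonal $i+j=n$ (the last two cases of~\eqref{tildepabijz}), and it is precisely the genericity of the seed that guarantees these determined values are nonzero.
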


\begin{proof}
Assume that a seed has the property~\eqref{widetildepabijneq}, the remaining $y_{i,j}$'s and a sequence $\mathbf{c}$ are (uniquely) determined in $\C^*$ by the exactly same process as in Section~\ref{outsideofbm}.
\end{proof}

%------------------------------------------------------------------------------------------
\subsection{Pre-generic elements}

We again exploit the coordinate system $\{z_{i,j} \mid (i,j) \in \Gamma(n) \backslash B(m) \cup \{(m,m)\} \}$ in~\eqref{coordinatechange}.
Recall the system of rational functions in~\eqref{tildepabijz}.

We have the following lemma.
\begin{lemma}
$k^{\frak{b},m}_{(i,j)}  ({\mathbf{z}}) \neq 0$ if and only if $h^{\frak{b},m}_{(i,j)}  ({\mathbf{y}}) \neq 0$ for $(i,j) \in \Gamma(n) \backslash B(m)$.
\end{lemma}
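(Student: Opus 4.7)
The plan is to show that under the substitution~\eqref{coordinatechange}, $k^{\frak{b},m}_{(i,j)}(\mathbf{z})$ and $h^{\frak{b},m}_{(i,j)}(\mathbf{y})$ differ by a Laurent monomial in the bulk coefficients. Because every $c^\textup{hor}_{r,r+1}$ and $c^\textup{ver}_{r+1,r}$ lies in $\Lambda_U$ and is therefore a unit, this monomial is non-zero, which gives the equivalence of non-vanishing.

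First, I would unify~\eqref{coordinatechange} by setting
\[
Y_r := \prod_{\ell=k}^{r-1} c^\textup{hor}_{\ell,\ell+1}, \qquad V_s := \prod_{\ell=k}^{s-1} c^\textup{ver}_{\ell+1,\ell}
\]
with empty products interpreted as $1$, so that the substitution becomes simply $z_{r,s} = Y_r^{-1} V_s\, y_{r,s}$. This is consistent with~\eqref{coordinatechange} because every $(r,s) \in (\Gamma(n) \backslash B(m)) \cup \{(m,m)\}$ has $r + s \leq n \leq 2k$, which forces at most one of the two products to be non-trivial. The telescoping identities $Y_r/Y_{r-1} = c^\textup{hor}_{r-1,r}$ and $V_s/V_{s-1} = c^\textup{ver}_{s,s-1}$ then hold uniformly in $r, s$, the right-hand side being $1$ whenever the relevant index is $\leq k-1$.

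Next, I would substitute $z_{r,s} = Y_r^{-1} V_s\, y_{r,s}$ into each of the four branches of~\eqref{tildepabijz}. In the branch $i \geq j$, $i + j < n$, the three monomials of $k^{\frak{b},m}_{(i,j)}(\mathbf{z})$ scale respectively by $Y_i/V_{j-1}$, $Y_i V_{j+1}/V_j^{2}$, and $Y_i^{2}/(Y_{i-1} V_j)$, and the telescoping identities collapse all three to the common factor $Y_i/V_j$ multiplying the corresponding terms of $h^{\frak{b},m}_{(i,j)}(\mathbf{y})$, giving $k^{\frak{b},m}_{(i,j)}(\mathbf{z}) = (Y_i/V_j)\cdot h^{\frak{b},m}_{(i,j)}(\mathbf{y})$. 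The three remaining branches are analogous; in the two $i + j = n$ cases, the explicit factors $\prod_{r=k}^{i} c^\textup{hor}_{r,r+1}$ and $\prod_{r=k}^{j} c^\textup{ver}_{r+1,r}$ that already appear on the right-hand side of~\eqref{tildepabijz} are precisely what the substitution produces after imposing the boundary convention $y_{s,n+1-s}=1$ from~\eqref{settingup}. Since each of these monomial prefactors is a non-zero unit, the equivalence of non-vanishing follows. The only bookkeeping to watch is the boundary regime $r=k$ or $s=k$ where empty products appear, but the unified $Y_r, V_s$ notation handles this automatically.
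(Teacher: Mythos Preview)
Your proposal is correct and follows essentially the same approach as the paper. The paper's proof is the single sentence ``Under the coordinate change~\eqref{coordinatechange},~\eqref{tildepabijy} is converted into~\eqref{tildepabijz}''; your argument simply unpacks this by introducing the unified factors $Y_r,V_s$ and verifying that $k^{\frak{b},m}_{(i,j)}(\mathbf{z})$ equals a unit multiple of $h^{\frak{b},m}_{(i,j)}(\mathbf{y})$ in each branch, which is exactly the content behind that sentence.
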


\begin{proof}
Under the coordinate change~\eqref{coordinatechange},~\eqref{tildepabijy} is converted into~\eqref{tildepabijz}.
\end{proof}

When $m < k = \lceil n / 2 \rceil$,
since $\mcal{I} \subset B(k)$ and $\mathbf{y}_\mcal{I} = \mathbf{z}_\mcal{I}$ by~\eqref{coordinatechange}, we may insert ${\mathbf{d}}_{\mcal{I}}$ into $\mathbf{z}_\mcal{I}$ as the starting point. For the case $m = k$, we have taken $c^{\textup{hor},\C}_{m,m+1} = 1$ and $c^{\textup{ver}, \C}_{m+1,m} = 1$ and hence $\mathbf{z}_\mcal{I} = \mathbf{y}_\mcal{I} = {\mathbf{d}}_{\mcal{I}}$ as well.
As in~\eqref{equ_nestseq}, consider the diagonal filtration of the field of rational functions (with respect to $z$)
$$
\C \subset \C ({\mathbf{z}_{(1)}}) \subset \C ({\mathbf{z}_{(2)}}) \subset \dots \subset \C ({\mathbf{z}_{(n)}})
$$
where
$$
{\mathbf{z}_{(r)}} := \bigcup_{s=2}^r \{z_{1,s-1}, \dots, z_{s-1,1} \} \backslash \Big( \mathbf{z}_{B(m)} \backslash \{ z_{m,m} \} \Big) \mbox{ and } \mathbf{z}_{B(m)} = \{ z_{i,j} \mid (i,j) \in B(m) \}.
$$

Choosing one component in a diagonal generically, we can make the first two equations of~\eqref{tildepabijz} non-zero because of the following lemma.
Let $(r,s) \in B(m)\setminus\{(m,m)\}$ be given.
Denote by $X(i)$ the rational function obtained by successive composition of
$k^{\frak{b},m}_{(j,k)}$ for all $(j,k) \prec_{\mathrm{hor}} (r,s)$. If the above mentioned
inductive determination does not meet an obstruction, its value corresponding at ${\bf y}$ is supposed to
determine the value of $z_{r-i,s+i}$. We only show the case for $i > 0$ since the case where $i < 0$ similar.

\begin{proposition}\label{rationalfunctionzij}
The rational function $X(i)$ is a non-constant fractional linear map of $z_{(r,s)}$ in coefficients $\C(\mathbf{z}_{(r+s-1)})$.
\end{proposition}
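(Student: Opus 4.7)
The plan is to prove the proposition by induction on $i$, treating $i > 0$ in detail; the case $i < 0$ is handled analogously by exploiting the symmetry between the two branches of~\eqref{tildepabijz}. For the base case $i = 0$, $X(0)$ is the variable $z_{(r,s)}$ itself and is therefore trivially the identity M\"obius transformation.

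For the inductive step, I would observe that the target variable $X(i+1) = z_{r-i-1,\,s+i+1}$ is determined from the previously known $X(i) = z_{r-i,\,s+i}$ and earlier-diagonal variables by the equation $k^{\frak{b},m}_{(r-i-1,\,s+i)}({\mathbf{z}}) = 0$. The relation $r \leq s$, visible at all seed positions in~\eqref{IndexsetforSeed}, forces $r - i - 1 < s + i$, placing this equation in the ``$i<j$'' branch of~\eqref{tildepabijz}. Isolating $z_{r-i-1,\,s+i+1}$ then yields the recursion
\begin{equation*}
X(i+1) = -\,z_{r-i-2,\,s+i} + \big(z_{r-i-1,\,s+i}\big)^2 \left(\frac{1}{X(i)} + \frac{1}{z_{r-i-1,\,s+i-1}}\right),
\end{equation*}
all of whose coefficients lie on diagonals of index at most $r+s-1$ (or are conventional boundary values dictated by~\eqref{settingup}) and therefore belong to $\C(\mathbf{z}_{(r+s-1)})$. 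Thus the recursion has the M\"obius shape $X(i+1) = P + Q/X(i)$ with $P, Q \in \C(\mathbf{z}_{(r+s-1)})$ and $Q = (z_{r-i-1,\,s+i})^2$. Composing with the inductive hypothesis that $X(i)$ is M\"obius in $z_{(r,s)}$ immediately gives that $X(i+1)$ is M\"obius in $z_{(r,s)}$ over $\C(\mathbf{z}_{(r+s-1)})$.

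The only remaining point to verify is non-degeneracy. The determinant of the M\"obius map $X(i) \mapsto P + Q/X(i)$ equals $-Q$, which is non-zero in $\C(\mathbf{z}_{(r+s-1)})$ because $z_{r-i-1,\,s+i}$ is one of the algebraically independent generators of this rational function field; multiplying by the inductive non-vanishing of the M\"obius determinant of $X(i)$ then yields non-degeneracy of the composed map $z_{(r,s)} \mapsto X(i+1)$, hence non-constancy. The main (but minor) point of care is the anti-diagonal boundary $r+s = n+1$, where the relevant equation in~\eqref{tildepabijz} picks up multiplicative constants of the form $\big(\prod c^{\textup{hor}}\big)^{\pm 1}$ or $\big(\prod c^{\textup{ver}}\big)^{\pm 1}$; these modify only the constant term $P$ of the recursion and not the coefficient $Q$, so the M\"obius shape and non-degeneracy are preserved, and the induction proceeds without change.
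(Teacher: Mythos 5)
Your proof is correct, and it uses the same basic skeleton as the paper (the two-term recursion $X(i) = [i] + [i,i-1]/X(i-1)$ with $[i],\,[i,i-1] \in \C(\mathbf{z}_{(r+s-1)})$ and $[i,i-1]$ a square of a generating variable, hence the M\"obius structure). Where it genuinely diverges is in how non-constancy is established. The paper writes $X(i)$ as a continued fraction, introduces an auxiliary combinatorial lemma expressing the convergents $A(i), B(i)$ as sums over partitions into singletons and consecutive pairs, extracts the three-term recurrence $A(i) = [i]A(i-1) + [i,i-1]A(i-2)$ (and likewise for $B$), and then argues by contradiction — if $X(i)\equiv C$ one substitutes $z_{r,s}=0,1$, shows $C-[i]\neq 0$, and concludes $X(i-1)$ would be constant. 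You instead note that the step $X(i)\mapsto X(i+1)$ is itself a M\"obius transformation with matrix $\bigl(\begin{smallmatrix} P & Q \\ 1 & 0\end{smallmatrix}\bigr)$, hence determinant $-Q = -(z_{r-i-1,s+i})^2 \neq 0$ in the field $\C(\mathbf{z}_{(r+s-1)})$, and then simply invoke multiplicativity of the determinant under composition. This is shorter, avoids the partition lemma entirely, and is in fact what the paper's contradiction argument is implicitly re-proving; so it is a genuine simplification of the same idea. Two small caveats: you write that $X(i+1)$ is determined ``by the equation $k^{\frak{b},m}_{(r-i-1,s+i)}({\bf z})=0$'' — it is determined by the \emph{value} of $k^{\frak{b},m}_{(r-i-1,s+i)}$, which equals $z_{r-i-1,s+i+1}$ per~\eqref{tildepabijz}, not by setting it to zero. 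And your closing caution about the anti-diagonal $r+s=n+1$ is moot: since $z_{r,s}$ lies in $\Gamma(n)$ we always have $r+s\le n$, so the equations used (on diagonal $r+s-1 < n$) never fall into the last two branches of~\eqref{tildepabijz}; those branches only enter later, when solving for the bulk parameters $\mathbf{c}$.
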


\begin{proof}
By~\eqref{tildepabijz}, we derive a recurrence relation for $X(i)$'s of the form
\begin{equation}\label{recrelxi}
X(i) = [i] + \frac{[i,i-1]}{X(i-1)}
\end{equation}
where
\begin{equation}\label{[i][i,i-1]}
[i] := - z_{r-i-1, s+i} + \frac{(z_{r-i, s+i-1})^2}{z_{r-i,s+i-2}},\,\, [i, i-1] :=(z_{r-i, s+i-1})^2.
\end{equation}
We note that $[i], \, [i,i-1] \in \C(\mathbf{{z}}_{(r+s-1)})$, while
$X(i) \in \C({\mathbf{z}}_{(r+s)})$ for all $i$.

Composing~\eqref{recrelxi} several times, $X(i)$ is expressed as a continued fraction in terms of $X(0) = z_{r,s}$.
If we set the initial condition $A(0) = 1, \, B(0) = 0$, we can express $X(i)$ as
\begin{equation}\label{X(i)}
X(i) = \frac{A(i) \cdot X(0) + B(i)}{A(i-1) \cdot X(0) + B(i-1)}
\end{equation}
for the elements  $A(i), \, B(i) \in \C(\mathbf{{z}}_{(r+s-1)})$.
Thus, we have $X(i) \in \C ({\mathbf{z}}_{(r+s-1)})(z_{r,s})$.

To show that every $X(i)$ is \emph{non-constant} with respect to $X(0)=z_{(r,s)}$,
we investigate properties of $A(i)$'s and $B(i)$'s.
By the induction, we obtain the following lemma.
\begin{lemma}
Consider $\{ i, i-1, \cdots, 1, 0 \}$.
Let $\mcal{P}_{A(i)}$ be the partitions of $\{ i, i-1, \cdots, 1 \}$ into one single number or two consecutive numbers and $\mcal{P}_{B(i)}$ the partitions of $\{ i, i-1, \cdots, 1, 0 \}$ into one single or two consecutive numbers containing the subset $[1,0]$. Then
$$
A(i) = \sum_{I \in \mcal{P}_{A(i)}} [I] \mbox{ and } B(i) = \sum_{I \in \mcal{P}_{B(i)}} [I],
$$
\end{lemma}
For instance, $A(3)$ and $B(3)$ are expressed as
\begin{align*}
A(3) &= [3][2][1] + [3][2,1] + [3,2][1], \\
B(3) & = [3][2][1,0] + [3,2][1,0].
\end{align*}

\begin{corollary}
\begin{align*}
&A(i) = [i] \cdot A(i-1) + [i, i-1] \cdot A(i-2) \\
&B(i) = [i] \cdot B(i-1) + [i, i-1] \cdot B(i-2).
\end{align*}
\end{corollary}

Note that $X(0)$ and $X(1)$ are non-constant Laurent polynomials with respect to $X(0) = z_{r,s}$. Suppose to the contrary that $X(i)$ is a constant function with the value $C$ and all $X(j)$'s for all $j < i$ are non-constant rational functions of $X(0) (=z_{r,s})$.

Suppose $X(i) \equiv C$.
By recalling $A(i), \, B(i) \in \C(\mathbf{{z}}_{(r+s-1)})$ and
substituting the values $z_{r,s} = 0, \, 1$ into~\eqref{X(i)} respectively, we obtain
\begin{align*}
&C \cdot A(i-1) = A(i) = [i] \cdot A(i-1) + [i, i-1] \cdot A(i-2)\\
&C \cdot B(i-1) = B(i) = [i] \cdot B(i-1) + [i, i-1] \cdot B(i-2).
\end{align*}
Then $(C-[i]) \cdot A(i-1) = [i,i-1] \cdot A(i-2)$.

We claim that $C - [i]$ is a non-zero rational function contained in $\C({\mathbf{z}}_{(r+s-1)})$. Otherwise
we would have $A(i-2) = B(i-2) = 0$ because $[i, i-1] = (z_{r-i, s+i-1})^2 \neq 0$. It would yield that $X(i-2) \equiv 0$, contradicting to the assumption that $X(i-2)$ is not constant with respect to $z_{r,s}$. We then have
\begin{align*}
A(i-1) = C^\prime \cdot A(i-2) \\
B(i-1) = C^\prime \cdot B(i-2)
\end{align*}
where $C^\prime = {[i,i-1]}/{(C - [i])} \in \C({\mathbf{z}}_{(r+s-1)})$. Then we deduce $X(i-1) = C^\prime$ which is constant with respect to $z_{r,s}$, a contradiction to the standing hypothesis. This finishes
the proof of the proposition.
\end{proof}

\begin{corollary}\label{choiceofzij}
Suppose that the set $\mathbf{{z}}_{(r+s-1)}$ is determined. There exists a non-zero real number $d_{r,s}$ such that if we set $z_{r,s} = d_{r,s}$, then we achieve
\begin{equation}\label{paijznonzero}
k^{\frak{b},m}_{(i,j)}  ({\mathbf{z}}) \neq 0
\end{equation}
for all $(i,j)$'s satisfying $i + j = r + s - 1$.
\end{corollary}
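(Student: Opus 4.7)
The plan is to reduce the statement immediately to Proposition~\ref{rationalfunctionzij} and then use the elementary fact that a non-constant M\"obius transformation has at most one zero and one pole on $\C \cup \{\infty\}$.

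First I would fix the previously determined $\mathbf{z}_{(r+s-1)}$ as a tuple of concrete non-zero complex numbers. Under this specialization, each coefficient $A(i), B(i) \in \C(\mathbf{z}_{(r+s-1)})$ appearing in the continued-fraction formula~\eqref{X(i)} becomes a complex number, so that each $X(i)$, viewed as a function of the single remaining variable $z_{r,s}$, is a fractional linear map of $\C \cup \{\infty\}$. By Proposition~\ref{rationalfunctionzij} this fractional linear map is non-constant, hence is a bijection of $\C \cup \{\infty\}$. Consequently $X(i)^{-1}(\{0,\infty\})$ consists of at most two points. Taking the union over the finitely many indices $(i,j)$ with $i+j = r+s-1$ would then yield a finite set $S \subset \C$ of ``forbidden'' values for $z_{r,s}$ --- precisely those that force some $k^{\frak{b},m}_{(i,j)}(\mathbf{z})$ to be zero or undefined.

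The rest is a cardinality argument: since $\R$ is uncountable while $S \cup \{0\}$ is finite, I can pick $d_{r,s} \in \R \setminus (S \cup \{0\})$, and setting $z_{r,s} = d_{r,s}$ makes every $k^{\frak{b},m}_{(i,j)}(\mathbf{z})$ with $i+j = r+s-1$ non-zero and well-defined, which is~\eqref{paijznonzero}. I do not foresee a substantive obstacle beyond this bookkeeping, since all the algebraic work is already contained in the non-constancy statement of Proposition~\ref{rationalfunctionzij}. Iterating this corollary diagonal by diagonal along $\mcal{I}_{\textup{seed}}$ then supplies the generic seed promised in Proposition~\ref{Existenceofgenericseeds}.
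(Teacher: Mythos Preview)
Your proof of the corollary is correct and is essentially the paper's own argument spelled out in more detail: the paper simply notes that each $X(i)$ is a non-constant fractional linear map of $z_{r,s}$, so only finitely many values of $z_{r,s}$ make some $z_{r-i,s+i}$ zero or undefined, and one avoids those. Your closing remark, however, overstates matters: iterating this corollary alone handles only the first two cases of~\eqref{tildepabijz}, and the paper requires the further Lemmas~\ref{reductionkk-1k-1k-1}--\ref{tempssedmk} to deal with the outermost diagonals $i+j=n-1,\,n$ where the bulk parameters $c^{\textup{hor}}_{\bullet},\,c^{\textup{ver}}_{\bullet}$ enter.
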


\begin{proof}
Since each ${X(i)}$ is a non-constant fractional
linear map of $z_{r,s}$, there are only finitely many $z_{r,s}$'s making $z_{r-i, s+i}$ zero or not defined.
Avoid these values when choosing $d_{r,s}$.
\end{proof}

\begin{definition}
Suppose the set ${\mathbf{z}}_{(r+s-1)}$ is given.
For an index $(r,s) \in \Gamma(n) \backslash B(m)$, an element $d_{r,s}$ in a seed is said to be \emph{pre-generic with respect to}
${\mathbf{z}}_{(r+s-1)}$ if~\eqref{paijznonzero} holds for any $(i,j) \in \Gamma(r+s) \backslash (B(m) \cup \Gamma(r+s-1))$.
\end{definition}

For the later purpose, we prove the following property of the the pre-generic elements.

\begin{lemma}\label{symmericseedz} Let $m < k$. Assume that $\mathbf{{z}}_{(s+m)}$
are previously determined. Suppose that we are given a seed
$$
\{d_{m,m}\} \cup \{d_{s, m+1} \in \C^* \mid 1 \leq s \leq m\}
$$
such that for each $s$ $d_{s,m+1}$ is pre-generic with respect to a choice of
$d_{m,m}$ and $d_{i,m+1}$ with $1 \leq i \leq s-1$.
Then, regardless of a choice of non-zero $d_{m,m}$, the same $d_{s,m+1}$
remains to be pre-generic as long as we do not change $d_{1,m+1}, \cdots, d_{s-1,m+1}$.
\end{lemma}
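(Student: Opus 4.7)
The plan is to track, diagonal by diagonal, how each determined $z_{i,j}$ in $\mathbf{z}_{(s+m)}$ depends on $d_{m,m}$, and then show that the pre-generic condition for the newly added $d_{s,m+1}$ is insensitive to this dependence.

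\textbf{Step 1: Scaling behavior of the already determined data.} If we change $d_{m,m}$ from $c$ to $c'$ and set $\lambda := c'/c$, Lemma~\ref{minussol} together with the construction in Corollary~\ref{corsymmetricsol} shows that the complex leading parts of $z_{i,j}=y_{i,j}$ for $(i,j)\in B(m)$ rescale uniformly by $\lambda$. Formula~\eqref{yjm+1} shows that the border values $z_{m+1,l}\equiv(-1)^{m-l}(d_{m,m})^2/d_{l,m+1}\bmod T^{>0}$ for $l\le s-1$ rescale by $\lambda^2$, while the seed column values $z_{l,m+1}=d_{l,m+1}$ for $l\le s-1$ are invariant by the hypothesis. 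These three pieces of data comprise the boundary input to the remaining determination inside $\mathbf{z}_{(s+m)}$.

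\textbf{Step 2: Propagation through the recursion.} I would then use the recursion of Proposition~\ref{rationalfunctionzij}, applied in each diagonal $\le s+m$ already handled in Section~\ref{outsideofbm}, to express every $z_{i,j}\in\mathbf{z}_{(s+m)}$ as an explicit rational function of the seed variables $(d_{m,m},d_{1,m+1},\dots,d_{s-1,m+1})$. The continued-fraction form $X(i)=(A(i)X(0)+B(i))/(A(i-1)X(0)+B(i-1))$ together with the explicit formulas~\eqref{[i][i,i-1]} for the brackets $[k]$ and $[k,k-1]$ makes this tracking mechanical. Because each bracket is a symmetric quadratic combination drawn from the boundary data of Step 1, the $d_{m,m}$-dependence of $z_{i,j}$ takes the form of a single overall monomial factor $\lambda^{w(i,j)}$, up to a correction that depends only on $d_{1,m+1},\dots,d_{s-1,m+1}$, where $w(i,j)\in\Z$ is determined unambiguously by a linear recurrence in $w$ induced by~\eqref{tildepabijz}.

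\textbf{Step 3: Invariance of the pre-generic condition.} With these weights in hand, I would finally analyze the newly determined $z$-values in diagonal $s+m+1$ via the recursion of Proposition~\ref{rationalfunctionzij} starting from $X(0)=z_{s,m+1}=d_{s,m+1}$. Since $X(0)$ carries weight $0$ (being a fixed seed value), the homogeneity established in Step~2 propagates to show that both the numerator $A(i)X(0)+B(i)$ and the denominator $A(i-1)X(0)+B(i-1)$ rescale by a common power of $\lambda$. Hence the vanishing locus of $X(i)$ and the locus where the denominator vanishes, viewed as subsets of the $d_{s,m+1}$-line $\C^*$, are $\lambda$-equivariantly the same; equivalently, they do not depend on the choice of $d_{m,m}$. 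Pre-genericness of $d_{s,m+1}$ is therefore preserved.

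\textbf{Main obstacle.} The delicate point is the internal consistency of the weight assignment $w(i,j)$ across the seam where the region $B(m)$ (scaling weight $1$ by Lemma~\ref{minussol}) meets the border column $z_{m+1,l}$ (weight $2$) and the fixed seed column $z_{l,m+1}$ (weight $0$). The weight recurrence $w(i+1,j)+w(i-1,j)=2w(i,j)=w(i,j+1)+w(i,j-1)$ forced by the homogeneity of~\eqref{tildepabijz} must be verified to admit an integer-valued solution on all of $\Gamma(n)\setminus B(m)\cup\{(m,m)\}$ consistent with these boundary weights. I expect this to follow by a diagonal-by-diagonal induction exploiting the reflection symmetry $w(i,j)+w(j,i)=2$ inherited from $y_{i,j}\cdot y_{j,i}=c^2$ inside $B(m)$, which propagates across the diagonal via the linearity of the weight recurrence.
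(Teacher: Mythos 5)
Your proposal takes a genuinely different route from the paper, and there is a gap in it that keeps the argument from closing.

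The gap is precisely at the "Main obstacle" you flag. Your stated weight recurrence $w(i+1,j)+w(i-1,j)=2w(i,j)=w(i,j+1)+w(i,j-1)$ is only part of what homogeneity of~\eqref{tildepabijz} forces. Writing out the requirement that every term of the four-term equation $-\tfrac{z_{i,j+1}}{z_{i,j}}-\tfrac{z_{i-1,j}}{z_{i,j}}+\tfrac{z_{i,j}}{z_{i+1,j}}+\tfrac{z_{i,j}}{z_{i,j-1}}=0$ rescales by the same power of $\lambda$ yields in addition the relations $w(i,j+1)=w(i-1,j)$ and $w(i+1,j)=w(i,j-1)$, i.e., that $w$ is \emph{constant along anti-diagonals} $i+j=\mathrm{const}$. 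Your own boundary data violates this repeatedly: $(m-1,m+1)$ in the fixed seed column (weight $0$) and $(m,m)$ (weight $1$) share the anti-diagonal $i+j=2m$; $(l,m+1)$ (weight $0$) and $(m+1,l)$ (weight $2$) share $i+j=m+1+l$. So no such $w$ exists as a solution of the full recurrence, and the $z_{i,j}$ are not monomials in $d_{m,m}$. Concretely, a term such as $z_{m,m+1}/z_{m,m}$ appearing in an equation adjacent to the corner of $B(m)$ carries weight $-1$ while the neighboring terms in the same equation carry weight $0$, so the isolated variable determined by that equation is a genuinely inhomogeneous function of $d_{m,m}$. Patching this by restricting attention to the anti-diagonals actually used in the pre-genericity check and noting that the problematic equations are either degenerate (their bad term is killed by the conventions $z_{r,m}=\infty$, $z_{m,s}=0$ of~\eqref{settingup}) or lie outside the relevant range is possible, but that case analysis is precisely what your proposal leaves unverified and what makes the argument delicate.

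The paper avoids homogeneity altogether. Starting from the base identity $z_{m+1,l}=(-1)^{m-l}(d_{m,m})^2/z_{l,m+1}$ supplied by~\eqref{pamly}, it proves the product identity $z_{j,i+1}\cdot z_{i+1,j}=(-1)^{i+j}(d_{m,m})^2$ by induction over $\prec_{\mathrm{ver}}$, using the four-term relation once at $(i,j)$ and once, back-substituted, at $(j,i)$. This says nothing about individual weights, and indeed individual weights need not exist; what it gives is exactly $k^{\frak b,m}_{(i,j)}(\mathbf{z})\neq 0$ if and only if $k^{\frak b,m}_{(j,i)}(\mathbf{z})\neq 0$ (since $k_{(j,i)}=z_{j,i+1}$ while $k_{(i,j)}=1/z_{i+1,j}$, the two differ by the nonzero factor $(-1)^{i+j}(d_{m,m})^2$). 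Together with the observation that the above-diagonal representative of each reflected pair depends only on the fixed data $d_{1,m+1},\dots,d_{s-1,m+1}$, pre-genericity is manifestly insensitive to $d_{m,m}$. Your closing intuition that the reflection symmetry $w(i,j)+w(j,i)=2$ is the key is pointed in exactly the right direction; the way to cash it in is as a product identity for the $z$'s themselves, not as a weight assignment on individual coordinates.
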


\begin{proof}
If $m < k$, we see $y_{i,j} = z_{i,j}$ for $(i,j) \in B(m)$ by~\eqref{coordinatechange}.
We will prove
\begin{equation}\label{eq:zji+1}
z_{j,i+1} = (-1)^{i+j} \cdot \frac{(d_{m,m})^2}{z_{i+1,j}}
\end{equation}
by the induction over the order $\prec_{ver}$.
Recall from~\eqref{pamly} that
\begin{equation*}
z_{m+1,i} = (-1)^{i + (m+1) - 1} \cdot \frac{(d_{m,m})^2}{z_{i,m+1}},
\end{equation*}
which provides the initial step for the induction. Next suppose that \eqref{eq:zji+1}
holds for all $(r,s) \prec_{ver} (j,i+1)$. Then we observe that all 4 sub-indices are smaller than $(j,i+1)$
appearing in the equation
\begin{equation}\label{eq:4term-equation}
- \frac{z_{i,j+1}}{z_{i,j}} - \frac{z_{i-1,j}}{z_{i,j}} + \frac{z_{i,j}}{z_{i+1,j}} + \frac{z_{i,j}}{z_{i,j-1}} = 0.
\end{equation}
By the induction hypothesis, we substitute \eqref{eq:zji+1} thereinto, we derive
$$
0 = k^{\frak{b},m}_{i,j}({\bf z}) = \frac{z_{j,i}}{z_{j+1,i}} + \frac{z_{j,i}}{z_{j,i-1}} + (-1)^{i + j -1}\frac{(d_{m,m})^2}{z_{i+1,j} \cdot z_{j,i}}
- \frac{z_{j-1,i}}{z_{j,i}}.
$$
Then by a back-substitution of \eqref{eq:4term-equation} thereinto, we obtain
$$
\frac{z_{j,i+1}}{z_{j,i}} + (-1)^{i + j- 1} \frac{(d_{m,m})^2}{z_{i+1,j}\, z_{j,i}} = 0
$$
which is equivalent to
\begin{equation*}
z_{j,i+1} = (-1)^{i+j} \cdot \frac{(d_{m,m})^2}{z_{i+1,j}}.
\end{equation*}
Therefore, $k^{\frak{b},m}_{(i,j)}  ({\mathbf{z}})  \neq 0$ if and only if
$k^{\frak{b},m}_{(j,i)}  ({\mathbf{z}})  \neq 0$. Hence, Lemma~\ref{symmericseedz} follows.
\end{proof}

%------------------------------------------------------------------------------------------
\subsection{Generic seeds}

Applying Corollary~\ref{choiceofzij}, we make the first two expressions in~\eqref{tildepabijz} non-zero by generically choosing the value of one entry of each diagonal.
To make the last two terms of ~\eqref{tildepabijz} non-zero
at the same time, we need to carefully adjust this choice we have made. We recall
$k: = \lceil n/2 \rceil$.
Depending on the parity of $n$, either $n = 2k-1$ or $n=2k$. We consider the two cases separately.

\smallskip

\noindent{\bf Case 1. $ n = 2k - 1$.} We need several lemmas.

\begin{lemma}\label{reductionkk-1k-1k-1}
Assume that $\mathbf{{z}}_{(n-2)}$ is given. Suppose that either $d_{k-1,k-1} = -1$ is pre-generic or $k - 1 =m$. Then there is a real number $d_{k-1,k-1}$ (sufficiently close to $-1$, but not equal to $- 1$) and a non-zero real number $d_{k-1,k}$ such that if $z_{k-1,k-1} = d_{k-1,k-1}$ and $z_{k-1,k} = d_{k-1,k}$,
\begin{equation}\label{pabijzneq0}
k^{\frak{b},m}_{(i,j)}  ({\mathbf{z}}) \neq 0 \mod T^{>0}
\end{equation}
for all $(i,j)$ with $i + j = n-1$ and $i + j = n$.
\end{lemma}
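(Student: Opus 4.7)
The plan is to determine the two seed values $d_{k-1,k-1}$ and $d_{k-1,k}$ in sequence, first securing pre-genericity on the diagonal $i+j=n-1$ and then on $i+j=n$. For the first step, I would invoke Proposition~\ref{rationalfunctionzij} applied to the diagonal $i+j=n-1$: each $k^{\frak{b},m}_{(i,j)}(\mathbf{z})$ with $i+j=n-1$ is a non-constant fractional linear function of $d_{k-1,k-1}$ with coefficients in $\C(\mathbf{z}_{(n-2)})$, so the set of bad values of $d_{k-1,k-1}\in\R$ (those forcing some such expression to vanish or producing an undefined $z_{i,j}$ on the next diagonal) is finite. The hypothesis---either $d_{k-1,k-1}=-1$ is itself pre-generic, or $(k-1,k-1)=(m,m)$ in which case Lemma~\ref{symmericseedz} supplies the needed freedom through the choice of $d_{m,m}$---places $-1$ outside this bad set. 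By openness, I can perturb slightly and select a real $d_{k-1,k-1}$ arbitrarily close to but not equal to $-1$ that is still admissible.

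With $d_{k-1,k-1}$ fixed in this way, the quantities $k^{\frak{b},m}_{(i,j)}(\mathbf{z})$ with $i+j=n$ become rational functions of $d_{k-1,k}$. I would next show that each is in fact a \emph{non-constant} fractional linear function of $d_{k-1,k}$ over $\C(\mathbf{z}_{(n-1)})$, by rerunning the continued-fraction argument of Proposition~\ref{rationalfunctionzij} through the recursive formulae in~\eqref{tildepabijz}. The distinguished role of the value $-1$ is this: at $d_{k-1,k-1}=-1$, the diagonal $n-1$ inherits the symmetry $z_{j,i}=-z_{i,j}^{-1}$ (an analogue of Lemma~\ref{symmericseedz} with $(d_{m,m})^2=1$), and this symmetry is precisely what could collapse one of the fractional linear expressions on diagonal $n$ to a constant. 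Perturbing $d_{k-1,k-1}$ away from $-1$ breaks the symmetry. Granted non-constancy, Corollary~\ref{choiceofzij} applied to the diagonal $i+j=n$ then yields a nonzero real $d_{k-1,k}$ making $k^{\frak{b},m}_{(i,j)}(\mathbf{z})\neq 0$ for all $(i,j)$ with $i+j=n$, which is the desired conclusion.

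The main obstacle I anticipate is the detailed verification that the perturbation $d_{k-1,k-1}\neq-1$ destroys every degeneracy that could occur on the diagonal $i+j=n$. This requires careful bookkeeping through the cascade~\eqref{tildepabijz}, pinpointing the symmetry $z_{j,i}=-z_{i,j}^{-1}$ as the unique source of accidental constancy and then exhibiting explicit coefficient expressions in $d_{k-1,k-1}$ that become nonzero once the symmetry is broken. Together with the two-step selection above, this produces the asserted pair $(d_{k-1,k-1},d_{k-1,k})$ and establishes~\eqref{pabijzneq0} on the top two diagonals.
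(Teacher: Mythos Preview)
Your outline reverses the paper's order of choices and, more importantly, leaves the crucial step for the top diagonal unjustified. In the paper, one first sets $d_{k-1,k-1}=-1$ provisionally and chooses a pre-generic $d_{k-1,k}$ at that value; this is what secures the conditions $k^{\frak{b},m}_{(i,j)}\neq 0$ for $i+j=n-1$, since those expressions coincide with $z_{i,j+1}$ or $1/z_{i+1,j}$ on diagonal $n$ and are controlled by the seed $d_{k-1,k}$, not by $d_{k-1,k-1}$ --- your first paragraph has this dependence backwards. With $d_{k-1,k}$ now fixed, one perturbs $d_{k-1,k-1}$ and handles $i+j=n$ by an explicit computation on the cascade~\eqref{lemma1012recall}: at $d_{k-1,k-1}=-1$ one finds $\prod_{r=k}^{j}c^{\textup{ver}}_{r+1,r}=-z_{n-j-1,j}\neq 0$ for $k\le j\le n-2$ while $\prod_{r=k}^{n-1}c^{\textup{ver}}_{r+1,r}=0$; after a small perturbation the former remain nonzero by continuity, and for the single vanishing one a direct induction on~\eqref{lemma1012recall} shows that $z_{k-1,k-1}\neq -1$ forces $c^{\textup{ver}}_{k+1,k}\neq -z_{k-2,k}$, hence $\prod_{r=k}^{j}c^{\textup{ver}}_{r+1,r}\neq -z_{n-j-1,j}$ for every $j$, whence $k^{\frak{b},m}_{(1,n-1)}\neq 0$.

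Your proposed route for $i+j=n$ --- show that each $k^{\frak{b},m}_{(i,j)}$ is a non-constant fractional linear function of $d_{k-1,k}$ by rerunning Proposition~\ref{rationalfunctionzij} --- does not go through as stated. That proposition treats only the first two cases of~\eqref{tildepabijz}; the $i+j=n$ expressions are the third and fourth cases, which involve the running products $\prod c^{\textup{ver}}_{r+1,r}$ and are not fractional linear in $d_{k-1,k}$. Worse, at $d_{k-1,k-1}=-1$ every one of them is \emph{constant} in $d_{k-1,k}$: the factor $1+1/z_{k-1,k-1}$ in the first line of~\eqref{lemma1012recall} vanishes, killing the $d_{k-1,k}$-dependence, and this propagates through the cascade. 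Your diagnosis via a symmetry $z_{j,i}=-z_{i,j}^{-1}$ on diagonal $n-1$ misidentifies the mechanism; the special role of $-1$ is simply the identity $1+1/(-1)=0$. Establishing non-constancy for $d_{k-1,k-1}\neq -1$ would require a separate argument you have not supplied, whereas the paper's perturbation-plus-induction sidesteps the issue entirely.
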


Note that $k^{\frak{b},m}_{(i,j)}  ({\mathbf{z}})$'s for $(i,j)$ with $i + j = n$ provide the last two expressions of~\eqref{tildepabijz}.

\begin{proof}
Assuming that $d_{k-1, k-1} = -1$ is pre-generic, by definition, every $z_{i, n - 1- i}$ is defined and becomes non-zero if we set $z_{k-1,k-1} = d_{k-1,k-1} = -1$. By Corollary~\ref{choiceofzij}, we can choose and fix a pre-generic element $d_{k-1,k}$ for $z_{k-1,k}$ so that the entries $z_{i,n-i}$'s are also determined.

We emphasize that $d_{k-1,k-1} = -1$ is \emph{never} being a component of a generic seed because of the following reason. It is straightforward to see that the equations $\ell^{\frak{b},m}_{(i,j)}  ({\mathbf{y}}) = 0$ in terms of ${\mathbf{z}}$ for $(i,j)$'s with $i + j \geq n$ and $i < j$ read
\begin{equation}\label{lemma1012recall}
\begin{cases}
\prod^{k}_{r=k} c^\textup{ver}_{r+1,r}&= - {z_{k-2,k}} + {(z_{k-1,k})^2} \left( 1 + \frac{1}{z_{k-1,k-1}} \right), \\
\prod^{k+1}_{r=k} c^\textup{ver}_{r+1,r} &= - {z_{k-3,k+1}} + {(z_{k-2,k+1})^2} \left( \left( \prod^{k}_{r=k} c^\textup{ver}_{r+1,r}\right)^{-1} + \frac{1}{z_{k-2,k}} \right), \\
&\cdots \\
\prod^{n-2}_{r=k} c^\textup{ver}_{r+1,r} &= - {z_{1,n-2}} + {(z_{2,n-2})^2} \left( \left( \prod^{n-3}_{r=k} c^\textup{ver}_{r+1,r}\right)^{-1} + \frac{1}{z_{2,n-3}} \right), \\
\prod^{n-1}_{r=k} c^\textup{ver}_{r+1,r} &= - {(z_{1,n-1})^2} \left( \left( \prod^{n-2}_{r=k} c^\textup{ver}_{r+1,r}\right)^{-1} + \frac{1}{z_{1,n-2}} \right).
\end{cases}
\end{equation}
If one chooses $z_{k-1,k-1} = d_{k-1, k-1} = -1$, then we obtain
\begin{align*}
\prod^{j}_{r=k} c^\textup{ver}_{r+1,r} = - {z_{n-j-1,j}}
\end{align*}
from~\eqref{lemma1012recall} for $j = k, k+1, \cdots, n-2$ and
\begin{equation}\label{problematicpamb}
k^{\frak{b},m}_{(1,n-1)}  ({\mathbf{z}})
  = \prod^{n-1}_{r=k} c^\textup{ver}_{r+1,r} = 0.
\end{equation}
Thus, the seed ${\mathbf{d}}_{\mcal{I}}$ is \emph{not} generic.
Nevertheless, we claim that there exists a choice of $d_{k-1, k-1}$ not equal to $-1$ but close to $-1$ so that~\eqref{pabijzneq0} is satisfied.

Note that the fixed $d_{k,k-1}$ remains to be pre-generic even if we perturb the value $z_{k-1, k-1}$ from $-1$ with sufficiently small amount. This is because the expression $k^{\frak{b},m}_{(i,j)}  ({\mathbf{z}}) $ for each index $(i, j)$ with $i + j = n - 1$ is a continuous function with respect to $z_{k-1, k-1}$ at $-1$ after inserting $d_{k, k-1}$ into $z_{k, k-1}$. Also, by Proposition~\ref{rationalfunctionzij}, there exists a dense set of pre-generic elements for $d_{k-1, k-1}$. Therefore,~\eqref{pabijzneq0} is satisfied for $i + j = n -1$.

Also, we observe that as $z_{k-1, k-1} \to -1$, because of~\eqref{tildepabijz} and~\eqref{lemma1012recall}, $k^{\frak{b},m}_{(n-j,j)}  ({\mathbf{z}}) \to - z_{n-j-1,j}$ when $j \geq k$. Because $-z_{n -j-1, j} \neq 0$ for $j$ with $k \leq j < n - 1$, we still have $k^{\frak{b},m}_{(n-j,j)}  ({\mathbf{z}}) \neq 0$ for $j$ with $k \leq j < n - 1$ if $d_{k-1,k-1}$ is sufficiently close to $-1$. Finally, we claim that $k^{\frak{b},m}_{(1,n-1)}  ({\mathbf{z}}) \neq 0$ if $z_{k-1, k-1} \neq -1$ so that the issue in~\eqref{problematicpamb} is solved.
From~\eqref{lemma1012recall} and $z_{k-1, k-1} \neq -1$, it follows
$$
c^\textup{ver}_{k+1, k} \neq -z_{k-2, k}.
$$
Proceeding inductively, we deduce $k^{\frak{b},m}_{(1,n-1)}  ({\mathbf{z}}) \neq 0$ from~\eqref{lemma1012recall}. The discussion on the part where $j < k$ is omitted because the argument is symmetrical.
Consequently, we may choose a generic $d_{k-1, k-1}$ sufficiently close to $-1$ so that~\eqref{pabijzneq0} holds for all $(i,j)$ with $i + j = n-1$ and $i + j = n$.

It remains to take care of the case where $k-1 = m$. $(k-1,k-1) = (m,m)$ is contained in the box $B(m)$ so that $d_{k-1,k-1}$ can be freely chosen by Corollary~\ref{corsymmetricsol}. Thus, the exactly same argument can be applied as above.
\end{proof}

By applying a similar argument, we can prove the following lemma.

\begin{lemma}\label{reductioniii+1i+1}
Suppose that either $d_{i, i} = \pm 1$ is pre-generic for $i > m$ or $i = m$. There is a real number $d_{i, i}$ (sufficiently close to $\pm 1$, but not equal to $\pm 1$) and a non-zero real number $d_{i,i+1}$ so that $d_{i+1, i+1} = \mp 1$ becomes pre-generic.
\end{lemma}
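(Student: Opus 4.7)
The proof parallels the strategy of Lemma~\ref{reductionkk-1k-1k-1}, now propagating pre-genericity one diagonal at a time rather than treating only the terminal diagonal. If $i = m$ then $(m,m) \in B(m)$ and Corollary~\ref{corsymmetricsol} grants any non-zero real value for $d_{m,m}$, so the pre-genericity hypothesis is automatic. If $i > m$, the assumption that $d_{i,i} = \pm 1$ is pre-generic means the already-chosen seeds determine the entries $z_{r,s}$ up through diagonal $r+s = 2i$ with $k^{\frak{b},m}_{(r,s)}(\mathbf{z}) \neq 0$. Apply Corollary~\ref{choiceofzij} to pick a pre-generic real value $d_{i,i+1}$ for $z_{i,i+1}$; by~\eqref{tildepabijz} this determines the remaining entries on diagonal $2i+1$.

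Next, attempt the next seed $d_{i+1,i+1} = \mp 1$. The key observation, analogous to the identification of~\eqref{problematicpamb} in the proof of Lemma~\ref{reductionkk-1k-1k-1}, is that the combined choice $d_{i,i} = \pm 1$ together with $d_{i+1,i+1} = \mp 1$ forces an exact algebraic cancellation of two terms in the recursion~\eqref{tildepabijz} at some index $(r,s)$ on a subsequent diagonal, producing $k^{\frak{b},m}_{(r,s)}(\mathbf{z}) = 0$. The sign pairing $(\pm,\mp)$ is precisely the one preserved by the $(-1)^{i+j}$ factor in the cross-diagonal symmetry of Lemma~\ref{symmericseedz} (cf.~\eqref{eq:zji+1}), which explains why the signs of $d_{i,i}$ and $d_{i+1,i+1}$ are forced to be opposite.

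To remove this obstruction, we perturb: replace $d_{i,i}$ by a nearby real number $\widetilde{d}_{i,i}$ with $\widetilde{d}_{i,i} \neq \pm 1$. By Proposition~\ref{rationalfunctionzij}, only finitely many values of $z_{i,i}$ fail to yield non-vanishing expressions in the recursion, so the set of pre-generic values is dense near $\pm 1$. Choose $\widetilde{d}_{i,i}$ therein, sufficiently close to $\pm 1$ that, by continuity of the rational maps in~\eqref{tildepabijz}, the previously fixed $d_{i,i+1}$ remains pre-generic on diagonal $2i+1$. The strict inequality $\widetilde{d}_{i,i} \neq \pm 1$ then breaks the exact cancellation identified above, so that $d_{i+1,i+1} = \mp 1$ becomes pre-generic as required. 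The principal technical point is pinning down the algebraic cancellation that obstructs the naive choice; once identified, density (Proposition~\ref{rationalfunctionzij}) and continuity of the recursion supply the perturbation automatically.
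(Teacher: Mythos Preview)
Your proposal is correct and follows the same approach as the paper, which itself only writes ``By applying a similar argument, we can prove the following lemma.'' One small clarification: the explicit cancellation you allude to is simply $\frac{1}{z_{i+1,i+1}} + \frac{1}{z_{i,i}} = 0$ at $(z_{i,i},z_{i+1,i+1}) = (\pm 1,\mp 1)$ in the second line of~\eqref{tildepabijz}, which then telescopes along the diagonal exactly as in~\eqref{lemma1012recall}; this is the direct reason for the sign alternation, rather than the cross-diagonal symmetry of Lemma~\ref{symmericseedz}.
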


We now ready to start the proof of Proposition~\ref{Existenceofgenericseeds} for the case where $n = 2k - 1$ and $m < k := \left\lceil n/2 \right\rceil$.

\begin{proof}[Proof of Proposition~\ref{Existenceofgenericseeds}]
We start with a tentative choice of $d_{m,m} = \pm 1$. Choosing pre-generic elements from $d_{1,m+1} := z_{1,m+1}$ to $d_{m-1,m+1} := z_{m-1, m+1}$, we find $\mathbf{{z}}_{(2m)}$ so that \eqref{pabijzneq0} is satisfied for each index $(i,j)$ with $i + j \leq 2m - 1$. Due to Lemma~\ref{reductioniii+1i+1}, we may select $d_{m,m}$ sufficiently close to $\pm 1$ and $d_{m,m+1}$ so that $d_{m+1, m+1} = \mp 1$ becomes pre-generic. Because of Lemma~\ref{symmericseedz}, note that $d_{1,m+1}, \cdots, d_{m, m+1}$ remain to be pre-generic even if we choose another $d_{m,m}$.
Moreover, applying Lemma~\ref{reductioniii+1i+1} repeatedly, we assert that $d_{k-1,k-1} = -1$ is also pre-generic by suitably choosing $d_{\bullet, \bullet}$. Hence, we have \eqref{pabijzneq0} for all indices $(i,j)$'s with $i + j \leq n -2$. Finally, Lemma~\ref{reductionkk-1k-1k-1} says that there is $d_{k-1, k-1}$ and $d_{k,k-1}$ such that~\eqref{pabijzneq0} holds for $i + j = n-1, n$. Thus, we have just found a generic seed.
\end{proof}

\smallskip

\noindent{\bf Case 2. $ n = 2k$ and $m < k$.} Modifying the proofs of Lemma~\ref{reductionkk-1k-1k-1} and Lemma~\ref{reductioniii+1i+1}, we can prove the following.

\begin{lemma}\label{evenreduction1}
Assume that $\mathbf{{z}}_{(n-2)}$  is given. Suppose that $d_{k-1,k} = -1$ is pre-generic. Then there are a real number $d_{k-1,k}$ (sufficiently close to $-1$, but not equal to $-1$) and a non-zero real number $d_{k,k}$ such that if $z_{k-1,k} = d_{k-1,k}$ and $z_{k,k} = d_{k,k}$,
$$
k^{\frak{b},m}_{(i,j)}  ({\mathbf{z}}) \neq 0 \mod T^{>0}
$$
for all $(i,j)$ with $i + j = n-1$ and $i + j = n$.

Suppose that $d_{i-1, i} = \pm 1$ is pre-generic for $i \geq m+1$. There are a real number $d_{i-1, i}$ (sufficiently close to $\pm 1$) and a non-zero real number $d_{i,i}$ so that $d_{i, i+1} = \mp 1$ becomes pre-generic.
\end{lemma}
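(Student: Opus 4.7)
The plan parallels the proofs of Lemmas \ref{reductionkk-1k-1k-1} and \ref{reductioniii+1i+1}, but is adjusted for the fact that when $n = 2k$ the middle antidiagonal of $\Gamma(n)$ passes through the pair of off-diagonal sites $(k-1,k)$ and $(k,k-1)$ instead of a single central point. Consequently, the role played by $d_{k-1,k-1}$ in the odd case will be taken by $d_{k-1,k}$, and the sign-alternation in the induction (analogous to Lemma \ref{symmericseedz}) will run through the off-diagonal entries $d_{i-1,i}$ rather than through diagonal ones.

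For the first assertion, I would rewrite the system $k^{\frak{b},m}_{(i,j)}(\mathbf{z}) = 0$ for $i+j \geq n$ with $i < j$ as a chain of recurrences in the style of \eqref{lemma1012recall}, now appropriate to $n = 2k$. A direct back-substitution should show that setting $d_{k-1,k} = -1$ forces the terminal equation controlling $k^{\frak{b},m}_{(1,n-1)}(\mathbf{z})$ to vanish identically, which is the even-case analogue of \eqref{problematicpamb}; in particular $d_{k-1,k} = -1$ cannot itself lie in a generic seed, so one must perturb. Since $k^{\frak{b},m}_{(i,j)}(\mathbf{z})$ is continuous in $z_{k-1,k}$ for each $(i,j)$ with $i+j = n-1$ (after inserting a pre-generic $d_{k,k}$), and Proposition \ref{rationalfunctionzij} supplies a dense set of pre-generic values, I can pick $d_{k-1,k}$ arbitrarily close to $-1$ but distinct from it and still pre-generic. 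The same limit analysis as in Lemma \ref{reductionkk-1k-1k-1} then yields $k^{\frak{b},m}_{(n-j,j)}(\mathbf{z}) \to -z_{n-j-1,j} \neq 0$ for $k \leq j < n-1$, and the recurrence \eqref{lemma1012recall} returns a nonzero value at $(1,n-1)$ precisely because $d_{k-1,k} \neq -1$. A second application of Proposition \ref{rationalfunctionzij} picks $d_{k,k}$ such that the transposed entry at $(n-1,1)$ is likewise nonzero.

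For the second assertion I would iterate along the off-diagonal: assuming $d_{i-1,i} = \pm 1$ is pre-generic, I introduce a small perturbation of $d_{i-1,i}$ together with an auxiliary non-zero real $d_{i,i}$. The antidiagonal symmetry established in Lemma \ref{symmericseedz}, transposed to the even-$n$ setting, then forces the reflected entry to carry the opposite sign, so $d_{i,i+1} = \mp 1$ arises automatically as the candidate for the next induction step. Density of pre-generic values (Proposition \ref{rationalfunctionzij}) together with continuity of the finitely many Laurent polynomials $k^{\frak{b},m}_{(r,s)}(\mathbf{z})$ in the perturbed variables guarantees that all pre-genericity constraints on the previously determined diagonals continue to hold after the perturbation.

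The main obstacle is bookkeeping inside the first assertion: one has to verify that the even-$n$ version of the chain \eqref{lemma1012recall} collapses \emph{exactly} when $d_{k-1,k} = -1$ (not merely partially), and that no finer cancellation accidentally zeroes out an earlier $k^{\frak{b},m}_{(i,j)}(\mathbf{z})$ once the perturbation is applied. Both points should reduce, as in the odd-$n$ case, to the non-constant fractional-linear dependence furnished by Proposition \ref{rationalfunctionzij} together with continuity of the polynomials involved; the sign conventions dictated by Lemma \ref{symmericseedz} then drive the induction in the second assertion.
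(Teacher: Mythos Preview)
Your proposal is correct and follows essentially the same approach the paper indicates: the paper does not give a detailed proof but simply asserts that the argument is obtained by modifying the proofs of Lemmas~\ref{reductionkk-1k-1k-1} and~\ref{reductioniii+1i+1}, and your outline carries out exactly that modification---replacing the central diagonal site $(k-1,k-1)$ by the off-diagonal site $(k-1,k)$ and running the same perturbation/continuity/density argument. One small remark: for the second assertion you invoke Lemma~\ref{symmericseedz} to produce the sign flip $d_{i,i+1} = \mp 1$, but that lemma is specifically about the symmetry adjacent to $B(m)$; the actual mechanism (as in the odd-case Lemma~\ref{reductioniii+1i+1}) is the direct recurrence computation in the style of~\eqref{lemma1012recall} rather than the $B(m)$-symmetry, so you should phrase that step accordingly.
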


Also, we need the lemma, which serves as the starting point to obtain the desired $d_{\bullet,\bullet}$'s.

\begin{lemma}\label{dm+11canbegeneric}
$d_{m, m+1} = \pm 1$ can be pre-generic.
\end{lemma}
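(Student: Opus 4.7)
The plan is to exploit the remaining freedom in the seed components $d_{m,m}$ and $d_{1,m+1},\dots,d_{m-1,m+1}$ so that the specialization $d_{m,m+1}=\pm 1$ becomes pre-generic. Pre-genericity at $(m,m+1)$ amounts to the requirement that every entry $z_{i,j}$ on the next diagonal $i+j=2m+2$, produced from $z_{m,m+1}=d_{m,m+1}$ by iterated application of Lemma~\ref{extensionlemma} to~\eqref{tildepabijz}, is both nonzero and well-defined. Since $m<k$ and $2m+1<n=2k$, we are in the first two cases of~\eqref{tildepabijz}, so no bulk parameters enter this verification. First I would fix a tentative $d_{m,m}\in\C^*$ (allowed by Corollary~\ref{corsymmetricsol}) and then successively select pre-generic values $d_{s,m+1}\in\C^*$ for $s=1,\dots,m-1$ using Corollary~\ref{choiceofzij}; this completely determines the filtered set $\mathbf{z}_{(2m)}$.

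The key step is to show that each target entry $z_{i,j}$ on diagonal $2m+2$, when regarded as a function of $d_{m,m+1}$ with the other seed components held fixed, is a non-constant fractional linear map with coefficients in $\C(\mathbf{z}_{(2m)})$. This can be carried out by the same continued-fraction recursion $X(i)=[i]+[i,i-1]/X(i-1)$ used in the proof of Proposition~\ref{rationalfunctionzij}, now initialized at $X(0)=z_{m,m+1}$ and propagated along the two arms of the diagonal issuing from $(m,m+1)$; the argument is purely formal and does not rely on $(m,m+1)$ being inside $B(m)$. As an immediate consequence, the set $\mathcal{B}\subset\C^*$ of values at which some target $z$-entry on diagonal $2m+2$ vanishes or becomes $\infty$ is finite.

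The final step is to argue that a generic choice of $d_{m,m}$ moves $\mathcal{B}$ away from $\{+1,-1\}$, yielding at least one sign for which $d_{m,m+1}=\pm 1\notin\mathcal{B}$. By the scaling identity $z_{j,i+1}\cdot z_{i+1,j}=(-1)^{i+j}(d_{m,m})^2$ established in Lemma~\ref{symmericseedz}, together with Corollary~\ref{corsymmetricsol}, the coefficients of the above fractional linear maps depend algebraically and non-trivially on $d_{m,m}$; hence $\mathcal{B}$ varies algebraically with $d_{m,m}$, and the locus of $d_{m,m}\in\C^*$ forcing $+1$ or $-1$ to lie in $\mathcal{B}$ is a proper algebraic subvariety. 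Since Lemma~\ref{symmericseedz} also guarantees that varying $d_{m,m}$ preserves the pre-genericity of each previously fixed $d_{s,m+1}$, such a choice of $d_{m,m}$ is compatible with all earlier steps. The main obstacle I anticipate is pinning down the non-triviality of this $d_{m,m}$-dependence: one must track how the factor $(d_{m,m})^2$ propagates through the continued-fraction expressions for the target entries, refining the inductive computation in Proposition~\ref{rationalfunctionzij}. Once this is verified, $d_{m,m+1}=\pm 1$ can indeed be made pre-generic.
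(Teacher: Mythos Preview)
Your strategy has a genuine gap: the claim that varying $d_{m,m}$ moves the bad set $\mathcal{B}$ away from $\pm 1$ cannot work. Trace the ``up'' propagation from $z_{m,m+1}$ along the diagonal $i+j=2m+1$, computing $z_{m-1,m+2},z_{m-2,m+3},\ldots$ via the equations at $(m-1,m+1),(m-2,m+2),\ldots$. Each of these is of the form~\eqref{tildepabijz} with $i<j$, and the entries that appear are either $z_{a,b}$ with $a<b$, $a+b\leq 2m$ (determined solely by $d_{1,m+1},\dots,d_{m-1,m+1}$) or entries from $B(m)$ set to $0/\infty$ by~\eqref{settingup}. The value of $d_{m,m}$ never enters. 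By the very symmetry $z_{j,i+1}=(-1)^{i+j}(d_{m,m})^2/z_{i+1,j}$ you invoke from Lemma~\ref{symmericseedz}, the ``down'' conditions are equivalent to the ``up'' ones, and the equivalence holds for \emph{every} nonzero $d_{m,m}$. Hence $\mathcal{B}$ depends only on $d_{1,m+1},\dots,d_{m-1,m+1}$, and your proposed perturbation in $d_{m,m}$ leaves it fixed. The obstacle you flag is therefore not merely technical---the mechanism you rely on does not exist.

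The paper bypasses this entirely with a constructive argument. Rather than picking the earlier seed entries $d_{s,m+1}$ generically and then trying to force $d_{m,m+1}=\pm 1$, it chooses \emph{all} of them at once via an explicit formula: the same product expression as in Lemma~\ref{symmetricsol},
\[
\widetilde{z}_{i,m+j}=\begin{cases}1&i=j\\[1pt]\prod_{r=0}^{j-i-1}(2i+2r)&i<j\\[1pt]\prod_{r=0}^{i-j-1}(2j+2r)^{-1}&i>j,\end{cases}
\]
which is a solution of $\ell^{\frak{b},m}_{(i,j)}(\mathbf{y})=0$ for $m+i+j<n$ with all entries nonzero. Scaling by $a=\prod_{r=0}^{m-2}(2+2r)$ (Lemma~\ref{minussol}) makes $z_{m,m+1}=1$, and pre-genericity is immediate since every relevant entry is an explicit nonzero number. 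Lemma~\ref{symmericseedz} then shows this holds regardless of $d_{m,m}$. The freedom being exploited is in the \emph{row} $d_{1,m+1},\dots,d_{m-1,m+1}$, not in $d_{m,m}$; if you wish to salvage a genericity argument, that is the parameter space you would need to vary.
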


\begin{proof}
As in Lemma~\ref{symmetricsol}, one sees that
\begin{equation}
\displaystyle \widetilde{z}_{i,m+j} :=
\begin{cases}
1 \, &\mbox{for } i = j \\
\displaystyle \prod_{r=0}^{j - i - 1} (2i + 2r) \, &\mbox{for } i < j \\
\displaystyle \prod_{r=0}^{i - j - 1} (2j + 2r)^{-1} \, &\mbox{for } i > j
\end{cases}
\end{equation}
is a solution of $\ell^{\frak{b},m}_{(i,j)}  ({\mathbf{y}})  = 0$ under the coordinate change~\eqref{coordinatechange} for $m + i + j < n$. Also, by Lemma~\ref{minussol}, so is
\begin{equation}\label{solutionofpamij2}
z_{i,m+j} := a \cdot \widetilde{z}_{i,m+j}
\end{equation}
for any non-zero complex number $a$. Selecting
$$
a := \prod^{m-2}_{r=0} (2 + 2r),
$$
$d_{m, m+1} = {z}_{m,m+1}$ becomes $1$. Because of Lemma~\ref{symmericseedz}, no matter what we choose any non-zero complex number $d_{m,m}$, $d_{m,m+1}$ is pre-generic (with respect to the previous determined $\mathbf{z}_{(2m)}$).
\end{proof}

\begin{proof}[Proof of Proposition~\ref{Existenceofgenericseeds} (continued)]
Combining Lemma~\ref{evenreduction1} and Lemma~\ref{dm+11canbegeneric}, we conclude Proposition~\ref{Existenceofgenericseeds} for the case where $n = 2k$ and $m < k$.
\end{proof}

\smallskip

\noindent{\bf Case 3. $ n = 2k$ and $m = k$.} In this case, we have taken $d_{m,m} = 1$. Because of Lemma~\ref{symmetricsol}, note that
\begin{equation}
\displaystyle \widetilde{z}_{i,m+j} :=
\begin{cases}
1 \, &\mbox{for } i = j \\
\displaystyle \prod_{r=0}^{j -i - 1} (2i + 2r) \, &\mbox{for } i < j \\
\displaystyle \prod_{r=0}^{i - j - 1} (2j + 2r)^{-1} \, &\mbox{for } i > j
\end{cases}, \,\,
\displaystyle \widetilde{z}_{m+i,j} :=
\begin{cases}
\displaystyle {(-1)^{m + i + j -1}} \, &\mbox{for } i = j \\
\displaystyle (-1)^{m + i + j -1} \prod_{r=0}^{i - j - 1} (2j + 2r)^{-1} \, &\mbox{for } i > j \\
\displaystyle (-1)^{m + i + j -1} \prod_{r=0}^{j - i - 1} (2i + 2r) \, &\mbox{for } i < j
\end{cases}
\end{equation}
respectively form a solution of $\ell^{\frak{b},m}_{(i,m+j)}  ({\mathbf{y}})  = 0$ and $\ell^{\frak{b},m}_{(m+i,j)}  ({\mathbf{y}})  = 0$ for $m + i + j < n$. Also, our choice makes $\ell^m_{(l)} (\mathbf{{y}}) = 0$ in~\eqref{pamly} because $c^\textup{ver}_{m+1,m} = 1$ and $c^\textup{hor}_{m,m+1} = 1$.

Furthermore,
\begin{equation}\label{solutionofpamij2}
z_{i,m+j} := a \cdot \widetilde{z}_{i,m+j}, \quad z_{m+i,j} := a^{-1} \cdot \widetilde{z}_{m+i,j}
\end{equation}
are also solutions for any non-zero complex number $a$. Thus, we have a one-parameter family of solutions. Then the expressions $k^{\frak{b},m}_{(i,m+j)}(\mathbf{{z}})$ and $k^{\frak{b},m}_{(m+i,j)}(\mathbf{{z}})$ with $i + j = n$ can be considered as a function with respect to $a$.

Because of Lemma~\ref{evenreduction1}, it suffices to deal with the starting point.

\begin{lemma}\label{tempssedmk}
There exists a choice of the variable $a$ such that
\begin{equation}\label{genericamim-i}
k^{\frak{b},m}_{(m-i,m+i)}(\mathbf{{z}}) \neq 0 \, \textup{ and } \, k^{\frak{b},m}_{(m+i,m-i)}(\mathbf{{z}}) \neq 0 \mod T^{>0}
\end{equation}
in~\eqref{tildepabijz}.
\end{lemma}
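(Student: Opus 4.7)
The plan is to interpret $a$ as a free parameter and to show that, after substituting the one-parameter family \eqref{solutionofpamij2} and recursively solving the boundary equations \eqref{tildepabijz} for the bulk parameters $c^\textup{ver}_{r+1,r},\ c^\textup{hor}_{r,r+1}$ ($r>m$), each of the finitely many expressions $k^{\frak{b},m}_{(m-i,m+i)}(\mathbf{z})$ and $k^{\frak{b},m}_{(m+i,m-i)}(\mathbf{z})$ for $i=1,\dots,m-1$ becomes a non-identically-zero rational function of $a$. Once this is shown, the union of their zero loci is a finite subset of $\C^*$, and any generic $a$ in the complement provides the desired seed.

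The recursion is set up as follows. Starting from the pre-fixed normalizations $c^{\textup{ver},\C}_{m+1,m}=c^{\textup{hor},\C}_{m,m+1}=1$ (which are imposed in the case $m=k$) and processing the indices $(m-i,m+i)$ for $i=1,\dots,m-1$ in order, each equation of \eqref{tildepabijz} rewrites as
$$
c^\textup{ver}_{m+i+1,m+i} \;=\; \frac{k^{\frak{b},m}_{(m-i,m+i)}(\mathbf{z})}{\prod_{r=m}^{m+i-1}c^\textup{ver}_{r+1,r}},
$$
and analogously at $(m+i,m-i)$. The denominator depends only on previously determined data, so by induction each $c^\textup{ver}_{r+1,r}$ becomes a rational function in $a$, and then so does each $k^{\frak{b},m}_{(m-i,m+i)}(\mathbf{z})$. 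The non-vanishing condition sought in the lemma is equivalent to the non-vanishing of the corresponding bulk parameter.

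The main step is to verify non-triviality. For the base $i=1$, direct substitution using $c^\textup{ver}_{m+1,m}=1$ yields
$$
k^{\frak{b},m}_{(m-1,m+1)}(\mathbf{z}) \;=\; -a\,\widetilde{z}_{m-2,m+1} \;+\; a^{2}(\widetilde{z}_{m-1,m+1})^{2}\!\left(1+\tfrac{1}{y_{m-1,m}}\right),
$$
a non-trivial quadratic polynomial in $a$, since $\widetilde{z}_{m-1,m+1}\neq 0$ by Lemma~\ref{symmetricsol} and $y_{m-1,m}\neq -1$ from the explicit symmetric solution of Corollary~\ref{corsymmetricsol}. The symmetric argument produces a non-trivial Laurent polynomial in $a^{-1}$ for $k^{\frak{b},m}_{(m+1,m-1)}(\mathbf{z})$. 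For the inductive step I would track the leading $a$-power of the dominant summand $(z_{m-i,m+i})^{2}C_i^{-1}$ through the recursion: under the inductive hypothesis the $a$-valuation of $C_i^{-1}$ is controlled, and the highest power of $a$ in $k^{\frak{b},m}_{(m-i,m+i)}(\mathbf{z})$ survives without cancellation. The main obstacle is precisely this bookkeeping of valuations in $a$: ruling out accidental cancellations in the chain of substitutions requires a careful induction using the explicit product formulas \eqref{solutionofpamij2}. Once non-triviality is secured, the existence of a generic $a$ follows from a standard Zariski-open-set argument.
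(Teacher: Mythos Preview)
Your overall strategy---show that each $k^{\frak{b},m}_{(m\mp i,m\pm i)}(\mathbf z)$ is a non-identically-zero rational function of $a$, then pick $a$ off a finite bad set---is exactly what the paper does. But two points need attention.

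\textbf{Base case.} In the formula for $k^{\frak{b},m}_{(m-1,m+1)}$ you keep the term $1/y_{m-1,m}$ and argue that $y_{m-1,m}\neq -1$ from the symmetric solution. That is a misreading of the SLT setup: for indices outside $B(m)$ the convention \eqref{settingup} forces $y_{r,m}=\infty$ for $r<m$, so $1/z_{m-1,m}=0$ and the expression is simply $-a\,\widetilde z_{m-2,m+1}+a^2(\widetilde z_{m-1,m+1})^2$. Your conclusion (non-trivial quadratic) survives, but the reasoning you give is not the right one.

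\textbf{Inductive step.} This is where your argument is genuinely incomplete. You propose to track the top $a$-power through the recursion and then acknowledge that ruling out cancellations is the ``main obstacle''---and you do not resolve it. The paper avoids this bookkeeping entirely by a normalization trick: it studies the ratio
\[
R_i(a)\;:=\;\frac{k^{\frak{b},m}_{(m-i,m+i)}(\mathbf z)}{z_{m-i,m+i}}
\]
instead of $k^{\frak{b},m}_{(m-i,m+i)}$ itself. Because every $z_{\bullet,\bullet}$ on the relevant anti-diagonals equals $a$ times a fixed nonzero constant $\widetilde z_{\bullet,\bullet}$, all ratios $z_{\bullet,\bullet}/z_{\bullet,\bullet}$ are constants independent of $a$, and using $C_{m+i+1}=k^{\frak{b},m}_{(m-i,m+i)}$ the recursion collapses to the M\"obius-type form
\[
R_{i+1}(a)\;=\;c_1\;+\;\frac{c_2}{R_i(a)},\qquad c_2=\frac{\widetilde z_{m-i-1,m+i+1}}{\widetilde z_{m-i,m+i}}\neq 0,
\]
with $c_1,c_2$ constants. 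Now ``$R_i$ non-constant $\Rightarrow$ $R_{i+1}$ non-constant'' is immediate, with no cancellation analysis needed. Since $z_{m-i,m+i}=a\,\widetilde z_{m-i,m+i}\not\equiv 0$, non-constancy of $R_i$ yields $k^{\frak{b},m}_{(m-i,m+i)}\not\equiv 0$. Your degree-tracking plan would eventually reach the same conclusion, but as written it is only a sketch; the normalization is the missing idea that turns it into a proof.
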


\begin{proof}
We claim that $k^{\frak{b},m}_{(m-i,m+i)}(\mathbf{{z}}) / z_{m-i, m+i}$ is a non-constant rational function w.r.t. $a$. For $i \geq 1$, we observe
$$
\frac{k^{\frak{b},m}_{(m-i,m+i)}(\mathbf{{z}})}{z_{m-1, m+1}} = - \frac{z_{m-2,m+1}}{z_{m-1,m+1}} + z_{m-1,m+1} =  - (2m - 4) + a \cdot \left( \prod^{m-3}_{r = 0} \left( 2 + 2r \right)^{-1} \right)
$$
is a non-constant linear function with respect to $a$. By the induction hypothesis, assume that
$$
\frac{k^{\frak{b},m}_{(m-i,m+i)}(\mathbf{{z}})}{z_{m-i, m+i}} := \frac{P_i (a)}{Q_i (a)}
$$
is a non-constant rational function with respect to $a$. Then, we see
\begin{align*}
\frac{k^{\frak{b},m}_{(m-i-1,m+i+1)}(\mathbf{{z}})}{z_{m-i-1, m+i+1}} &= \left( - \frac{z_{m-i-2,m+i+1}}{z_{m-i-1,m+i+1}} + \frac{z_{m-i-1,m+i+1}}{z_{m-i-1,m+i}} \right) + \frac{z_{m-i-1,m+i+1}}{k^{\frak{b},m}_{(m-i,m+i)}(\mathbf{{z}})} \\
&= \left( - \frac{\widetilde{z}_{m-i-2,m+i+1}}{\widetilde{z}_{m-i-1,m+i+1}} + \frac{\widetilde{z}_{m-i-1,m+i+1}}{\widetilde{z}_{m-i-1,m+i}} \right) + \frac{\widetilde{z}_{m-i-1,m+i+1}}{\widetilde{z}_{m-i,m+i}} \cdot \frac{Q_i(a)}{P_i(a)},
\end{align*}
which is also a non-constant rational function. Similarly, one can see that $k^{\frak{b},m}_{(m-i,m+i)}(\mathbf{{z}})$ is also a non-constant rational function for $i \geq 1$. Thus,~\eqref{genericamim-i} is established if choosing $a$ generically.
\end{proof}

We are ready to prove Proposition~\ref{Existenceofgenericseeds} for the case where $n = 2k$ and $m = k :=  \left\lceil n/2 \right\rceil$.

\begin{proof}[Proof of Proposition~\ref{Existenceofgenericseeds} (continued)]
By Lemma~\ref{tempssedmk}, we choose $d_{i, m+1} := a \cdot \widetilde{z}_{i,m+1}$ from~\eqref{solutionofpamij2} as a generic seed. It complete the proof.
\end{proof}

%------------------------------------------------------------------------------------------
\subsection{Proof of Theorem~\ref{theoremD}}

Finally, we are ready to prove the following main theorem.

\begin{theorem}[Theorem \ref{theoremD}]\label{theorem_completeflagmancotinuum}
Let $\lambda = \{ \lambda_{i} := n - 2i + 1 \,|\, i = 1, \cdots, n \}$ be an $n$-tuple of real numbers for an arbitrary integer $n \geq 4$.
Consider the co-adjoint orbit $\mathcal{O}_\lambda$, a complete flag manifold $\mathrm{Fl}(n)$ equipped with the monotone Kirillov--Kostant--Souriau symplectic form $\omega_\lambda$.
Then each Gelfand--Cetlin fiber $L_m(t)$ is non-displaceable Lagrangian for every $2 \leq m \leq \left\lfloor \frac{n}{2} \right\rfloor$.

In particular, there exists a family of non-displaceable non-torus Lagrangian fibers
$$\left\{L_m(1) \mid 2 \leq m \leq \left\lfloor \frac{n}{2} \right\rfloor, m \in \Z \right\}$$
where $L_m(1)$ is diffeomorphic to $\mathrm{U}(m) \times T^{\frac{n(n-1)}{2} - m^2}$.
\end{theorem}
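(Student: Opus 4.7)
The plan is to assemble the machinery developed in Sections~\ref{section:decompositionofpote} and~\ref{solvabilityofthesplitleadingtermequ}, treating the torus fibers $L_m(t)$ for $0 \leq t < 1$ and the non-torus fiber $L_m(1)$ separately.

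\emph{Torus fibers $L_m(t)$ with $0 \leq t < 1$.} Fix $m$ with $2 \leq m \leq \lfloor n/2 \rfloor$ and $t \in [0,1)$. By Proposition~\ref{Existenceofgenericseeds}, there exists a generic seed of $\Gamma(n)$ associated with $B(m)$; together with Corollary~\ref{splitleadingtermequationissolv} this produces a $\C^*$-valued solution $(y^\C_{i,j})$ of the split leading term equation~\eqref{splitleadingtermequ} for some non-zero complex constants $c^{\mathrm{hor},\C}_{i,i+1}$ and $c^{\mathrm{ver},\C}_{j+1,j}$. Feeding this data into Theorem~\ref{splitleadingtermequationimpliesnonzero} yields a bulk-deformation parameter $\frak{b}$ of the form~\eqref{bulkparak2} together with a critical point $(y_{i,j}) \in \Lambda_U^{|\Gamma(n)|}$ of the bulk-deformed potential function $W^{\frak{b}}$ of $L_m(t)$, with $y_{i,j} \equiv y^\C_{i,j} \mod T^{>0}$ on the indices where $y^\C_{i,j}$ is defined. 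Theorem~\ref{criticalpointimpliesnondisplaceability} then certifies non-displaceability of $L_m(t)$.

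\emph{The limit fiber $L_m(1)$.} The point $I_m(1)$ lies in the relative interior of the face $f_m$, and $L_m(1) = \Phi_\lambda^{-1}(I_m(1))$ is a non-torus fiber diffeomorphic to $\mathrm{U}(m) \times T^{n(n-1)/2 - m^2}$ by the classification of GC fibers from Part~I of \cite{CKO}. Since the parametrization $t \mapsto I_m(t)$ in~\eqref{IMT} is continuous with $I_m(t) \to I_m(1)$ as $t \uparrow 1$, and $\Phi_\lambda$ is a proper completely integrable system on the compact manifold $\mcal{O}_\lambda$, Proposition~\ref{closednessofnondisp} applies to the sequence $\{\mathbf{u}_i := I_m(1 - 1/i)\}$: each $\Phi_\lambda^{-1}(\mathbf{u}_i) = L_m(1-1/i)$ is non-displaceable by the previous paragraph, and the sequence converges to $I_m(1)$, so $L_m(1)$ is non-displaceable as well.

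\emph{Main obstacle and location of the real work.} The present theorem is essentially a clean assembly once the preceding two sections are in hand, so no substantial new obstacle appears at this final step. The genuine difficulties have all been absorbed into earlier results: namely (i) the inductive $T$-adic promotion of a $\C^*$-solution of the split leading term equation to an honest $\Lambda_U$-valued critical point of $W^{\frak{b}}$, carried out across the boundary of $B(m)$ in Sections~\ref{symmetriccomplexsolinbm}--\ref{outsideofbm} via the coordinate change~\eqref{coordinatechange} and repeated application of Lemmas~\ref{extensionlemma}--\ref{extensionlemma2}; and (ii) the combinatorial construction of a generic seed in Section~\ref{solvabilityofthesplitleadingtermequ}, in particular the delicate perturbation arguments near the anti-diagonal (Lemmas~\ref{reductionkk-1k-1k-1}, \ref{reductioniii+1i+1}, \ref{evenreduction1}) and the special treatment of the case $n = 2k$, $m = k$ using the one-parameter family~\eqref{solutionofpamij2} and Lemma~\ref{tempssedmk}. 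The only point worth underlining at the end is the applicability of Proposition~\ref{closednessofnondisp} to pass from torus fibers to the Hausdorff limit $L_m(1)$, which makes the family $\{L_m(1) \mid 2 \leq m \leq \lfloor n/2 \rfloor\}$ of non-torus non-displaceable Lagrangians emerge as an automatic consequence.
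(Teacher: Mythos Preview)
Your proposal is correct and follows essentially the same route as the paper's own proof: invoke Corollary~\ref{splitleadingtermequationissolv} (via Proposition~\ref{Existenceofgenericseeds}) to solve the SLT-equation, apply Theorem~\ref{splitleadingtermequationimpliesnonzero} to obtain a bulk parameter $\frak b$ with a $\Lambda_U$-critical point of $W^{\frak b}$, conclude non-displaceability of $L_m(t)$ for $0\le t<1$ from Theorem~\ref{criticalpointimpliesnondisplaceability}, and pass to $L_m(1)$ by Proposition~\ref{closednessofnondisp}. Your write-up is simply more explicit about the limiting sequence and adds useful commentary on where the work lies, but the logical skeleton is identical.
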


\begin{proof}
By Corollary~\ref{splitleadingtermequationissolv}, the SLT-equation~\eqref{splitleadingtermequ} has a desired solution for some nonzero complex numbers $c^{\textup{ver},\C}_{i+1, i}$'s and $c^{\textup{hor}, \C}_{j, j+1}$'s ($i, j \geq k$). Theorem~\ref{splitleadingtermequationimpliesnonzero} ensures that for each Lagrangian torus $L_m(t)$ ($0 \leq t < 1$), there exists a suitable bulk-deformation parameter $\frak{b}$ of the form~\eqref{bulkparak} so that $W^\frak{b}$ admits a critical point.  By Theorem~\ref{criticalpointimpliesnondisplaceability}, each GC torus fiber $L_m(t)$ for $0 \leq t < 1$ is non-displaceable. Furthermore, Lemma~\ref{closednessofnondisp} imply that $L_m(1)$ is non-displaceable.
\end{proof}

%------------------------------------------------------------------------------------------
\section{Calculation of potential function deformed by Schubert cycles}\label{sec_bulkdefbyschucy}

Since the main steps of the derivation of \eqref{bulkdeformedpotential} are the
same as that of the proof of \cite[Proposition 4.7]{FOOOToric2} given in Section 7 therein, we will only
explain modifications we need to make to apply them to the current GC case.
Also for the purpose of proving the counter part of Theorem \ref{Foootoric2potenti} in the present paper, the facts that
a Fano manifold $X_\epsilon$ has a toric degeneration and that we have only to consider codimension two cycles also help us to simplify the study of holomorphic discs contributing to the potential functions.
We closely follow \cite[Section 9]{NNU}.

Let $L$ be a Lagrangian submanifold in a symplectic manifold $X$. Let $\mcal{M}_{k+1; \ell}(X, L; \beta)$ denote the moduli space of stable maps in the class $\beta \in \pi_2(X, L)$ from a bordered Riemann surface $\Sigma$ of genus zero with $(k+1)$ marked points $\{z_s\}_{s=0}^{k}$ on the boundary $\pa \Sigma$ respecting the counter-clockwise orientation and $\ell$ marked points $\{z^+_r \}_{r = 1}^{\ell}$ at the interior of $\Sigma$.
It naturally comes with two types of evaluation maps, at the $i$-th boundary marked point
\begin{equation}\label{evaluationatboundarymarked}
\ev_i \colon \mathcal M_{k+1;\ell}(X, L; \beta) \to L; \quad \ev_i \left( [ \varphi \colon \Sigma \to X, \{z_s\}_{s=0}^{k+1}, \{z^+_r \}_{r = 1}^{\ell}] \right) = \varphi(z_i)
\end{equation}
and at the $j$-th interior marked point
\begin{equation}\label{evaluationatinteriormarked}
\ev_j^{\text{\rm int}} \colon \mathcal M_{k+1;\ell}(X, L; \beta) \to X; \quad \ev_j^{\text{\rm int}} \left( [ \varphi \colon \Sigma \to X, \{z_s\}_{s=0}^{k+1}, \{z^+_r \}_{r = 1}^{\ell}] \right) = \varphi(z^+_j).
\end{equation}
Set $\mcal{M}_{k+1}(X, L; \beta) := \mcal{M}_{k+1; \ell = 0}(X, L; \beta)$, the moduli space without interior marked points and let
$$
\bev_+ := (\ev_1, \cdots, \ev_k).
$$

Recall that an $A_\infty$-structure with the operators
$$
 \frak m_k = \sum_\beta \frak m_{k,\beta} \cdot T^{\omega(\beta)/  2\pi}, \quad  \frak m_{k,\beta}(b_1, \cdots, b_k): = (\ev_0)!(\bev_+)^*(\pi_1^*b_1 \otimes \cdots \otimes \pi_k^*b_k)
$$
on the de Rham complex $\Omega(L)$ is defined via a smooth correspondence.
\begin{equation}\label{smoothcorrespondence}
	\xymatrix{
                              & {\mcal{M}_{k+1}(X, L; \beta)} \ar[dl]_{\bev_+} \ar[dr]^{\ev_0} &
      \\
 L^k & & L}
\end{equation}
where $\pi_i \colon L^k \to L$ denotes the projection to the $i$-th copy of $L$.
For a general symplectic manifold, one should choose a system of compatible Kuranishi structures and CF-perturbations on $\mcal{M}_{k+1,l}(X, L; \beta)$'s in order to apply the above smooth correspondence, see \cite{Fuk, FOOO:Kura1, FOOO:Kura2} for the details of construction. For a Lagrangian toric fiber $L$ in a $2n$-dimensional toric manifold, by constructing a system of compatible $T^n$-equivariant Kuranishi structures and multi-sections, the smooth correspondence can be applied without adding an auxiliary space for perturbing multi-sections to make it submersive, \cite[Section 12]{FOOOToric2}. It is because $\ev_0$ is  submersive by the $T^n$-equivariance.

We now recall the computation of the potential function
of a torus fiber $L_\varepsilon \subset X_\varepsilon$ in \cite{NNU}.
They were able to exploit
the presence of toric degeneration of $X_\epsilon$ to $X_0$ in their computation
the explanation of which is now in order.
For the study of holomorphic discs in $X_0$ which is not smooth,
they used the following notion in Nishinou--Siebert \cite{NS}.

\begin{definition}[Definition 4.1 in \cite{NS}]\label{toricallytransverse}
A holomorphic curve in a toric variety $X$ is called \emph{torically transverse} if it is disjoint
from all toric strata of codimension greater than one. A stable map
$\varphi\colon \Sigma \to X$ is \emph{torically transverse} if $\varphi(\Sigma) \subset X$ is
torically transverse and $\varphi^{-1}(\operatorname{Int}X) \subset \Sigma$ is dense.
Here, $\operatorname{Int}X$ is the complement of the toric divisors in $X$.
\end{definition}

We denote by
$S_0: = \operatorname{Sing}(X_0)$ the singular locus of $X_0$.
Using the classification result \cite{CO} of holomorphic discs attached to
a Lagrangian toric fiber in a smooth toric manifold and the property of the small resolution,
Nishinou--Nohara--Ueda \cite{NNU} proved the following.

\begin{lemma}[Proposition 9.5 and Lemma 9.15 in \cite{NNU}] \label{discsinX0}
Any holomorphic disc $\varphi \colon (\mathbb{D}^2, \partial \mathbb{D}^2) \to (X_0,L_0)$ can be deformed into a holomorphic
disc with the same boundary condition that is torically transverse. Furthermore
the moduli space ${{\mathcal M}}_1(X_0,L_0;\beta)$ is empty if the Maslov
index of $\beta$ is less than two.
\end{lemma}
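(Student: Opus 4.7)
The plan is to transfer the question from the singular toric variety $X_0$ to its small resolution $p\colon\tilde X_0 \to X_0$, where classical results for smooth toric Fano manifolds apply. Recall that the GC toric variety $X_0$ admits a small resolution whose exceptional locus $E = p^{-1}(S_0)$ has complex codimension at least two and whose fibers are products of projective spaces; in particular $p$ is crepant, so $p^{*}K_{X_0} = K_{\tilde X_0}$. Since $L_0$ is a torus fiber over an interior point of $\Delta_\lambda$ it is disjoint from $S_0$, and $p$ restricts to a biholomorphism from a neighborhood of $\tilde L_0 := p^{-1}(L_0)$ onto a neighborhood of $L_0$.

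First, given a holomorphic disc $\varphi\colon (\mathbb{D}^2,\partial\mathbb{D}^2)\to (X_0,L_0)$, I would construct a holomorphic lift $\tilde\varphi\colon (\mathbb{D}^2,\partial\mathbb{D}^2)\to(\tilde X_0,\tilde L_0)$ with $p\circ\tilde\varphi=\varphi$. Because $L_0\cap S_0=\emptyset$ and $\mathrm{Im}\,\varphi$ cannot be entirely contained in $S_0$, the preimage $\varphi^{-1}(S_0)$ is a finite set of interior points of $\mathbb{D}^2$. Over $\mathbb{D}^2\setminus\varphi^{-1}(S_0)$ the lift is canonical, and at each point of $\varphi^{-1}(S_0)$ one uses the explicit local model of the small resolution (the fiber over the singular stratum being a projective space) together with properness of $p$ and irreducibility of $\mathbb{D}^2$ to extend the lift holomorphically, as in the proof of \cite[Proposition 9.5]{NNU}.

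Second, having obtained $\tilde\varphi$ in the \emph{smooth} toric Fano variety $\tilde X_0$, I would invoke the result of Nishinou--Siebert \cite{NS} to deform $\tilde\varphi$ through holomorphic discs with boundary on $\tilde L_0$ into a torically transverse disc $\tilde\varphi'$. The pushdown $\varphi':=p\circ\tilde\varphi'$ is then a deformation of $\varphi$ with the same boundary condition in $(X_0,L_0)$. Toric transversality descends: the exceptional divisors in $\tilde X_0$ lie over $S_0$, so avoiding codimension $\geq 2$ toric strata of $\tilde X_0$ implies avoiding codimension $\geq 2$ toric strata of $X_0$. For the Maslov index statement, the crepancy of $p$ identifies $\mu(\beta)=\mu([\tilde\varphi])$, and the Cho--Oh classification \cite{CO} of Maslov-index-two discs bounding toric Lagrangian fibers in smooth toric manifolds forces $\mu([\tilde\varphi])\geq 2$, so $\mathcal M_1(X_0,L_0;\beta)=\emptyset$ when $\mu(\beta)<2$.

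The main obstacle will be the first step: producing a genuinely \emph{holomorphic} (rather than merely continuous) lift of $\varphi$ through the singular points of $X_0$. A continuous lift exists by properness and connectedness of the fibers of $p$, but holomorphicity at the singular points requires analyzing the precise local model of the small resolution of the GC toric variety (the relevant singularities being conifold-like) and checking that the unique continuous extension of a holomorphic germ remains holomorphic across the exceptional fiber. Once this local analysis is in place, the remaining steps are immediate applications of \cite{NS} and \cite{CO}.
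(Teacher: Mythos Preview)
The paper does not prove this lemma; it is quoted directly from \cite[Proposition 9.5 and Lemma 9.15]{NNU} and used as a black box. Your outline is essentially a sketch of the NNU argument itself: lift through the small resolution $p\colon \tilde X_0 \to X_0$, work on the smooth toric Fano $\tilde X_0$, and push back down. So there is no real difference in approach to compare.

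One small correction to your sketch: the deformation of $\tilde\varphi$ into a torically transverse disc is not a result of Nishinou--Siebert \cite{NS}, which treats closed stable maps rather than bordered discs. In \cite{NNU} the toric transversality for discs is obtained by combining the explicit Cho--Oh classification \cite{CO} of holomorphic discs bounding toric fibers in smooth toric manifolds with a dimension count showing that the locus of discs meeting codimension-two toric strata has positive codimension in the moduli space. Once you replace the appeal to \cite{NS} by this, your outline matches the cited proof.
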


\begin{lemma}[Lemma 9.9 in \cite{NNU}]\label{nodiscinW0} There is a small neighborhood $W_0$ of
the singular locus $S_0 \subset X_0$ such that no holomorphic discs of Maslov index two intersect $W_0$.
\end{lemma}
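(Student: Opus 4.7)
My plan is to prove this by passing to a small resolution $\pi \colon \widetilde{X}_0 \to X_0$ of the Gelfand--Cetlin toric variety and using the classification of Maslov index two discs attached to Lagrangian torus fibers in a smooth toric manifold due to Cho--Oh \cite{CO}. The key observation is that, since $\pi$ is a small (in particular, crepant) resolution, its exceptional locus $E = \pi^{-1}(S_0)$ has complex codimension at least two in $\widetilde{X}_0$, and $\pi$ restricts to a biholomorphism $\widetilde{X}_0 \setminus E \to X_0 \setminus S_0$ that identifies the torus fibers $\widetilde{L}_0 \subset \widetilde{X}_0$ and $L_0 \subset X_0$ as well as their symplectic areas and Maslov data.

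First I would reduce to the transverse case. By Lemma~\ref{discsinX0}, any holomorphic disc $\varphi\colon(\mathbb{D},\partial\mathbb{D})\to(X_0,L_0)$ can be deformed into a torically transverse disc with the same boundary condition and the same homotopy class, and the moduli $\mathcal{M}_1(X_0,L_0;\beta)$ for classes of Maslov index $<2$ is empty. A torically transverse disc is by definition disjoint from all toric strata of codimension greater than one in $X_0$; in particular it is disjoint from $S_0$, since $S_0$ sits inside the codimension $\geq 2$ toric strata of $X_0$ (the singularities of the GC toric variety occur only along lower-dimensional faces). Therefore the deformed disc avoids $S_0$.

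Next, I would upgrade this ``disc-by-disc'' statement to a uniform neighborhood statement. Since $\pi$ is a small resolution and $\varphi$ (after deformation) avoids $S_0$, the map $\varphi$ lifts uniquely through $\pi$ to a holomorphic disc $\widetilde{\varphi}\colon(\mathbb{D},\partial\mathbb{D})\to(\widetilde{X}_0,\widetilde{L}_0)$ of the same Maslov index two. By the Cho--Oh classification on the smooth toric manifold $\widetilde{X}_0$, the class $[\widetilde{\varphi}]$ has total intersection number one with the toric divisors of $\widetilde{X}_0$; equivalently, $\widetilde{\varphi}$ hits exactly one smooth irreducible toric divisor transversely once and avoids all toric strata of codimension $\geq 2$. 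In particular $\widetilde{\varphi}(\mathbb{D})$ is disjoint from the exceptional locus $E$, because $E$ lies in the union of codimension $\geq 2$ toric strata of $\widetilde{X}_0$ (small resolutions of GC singularities replace a singular toric stratum by a projective fibration of higher codimension). Pushing down, $\varphi(\mathbb{D})$ is disjoint from $S_0$.

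Finally, for the existence of a uniform $W_0$: the union $\mathcal{N} := \bigcup_{\mu(\beta)=2}\mathrm{Im}\,\ev_0\bigl(\mathcal{M}_1(X_0,L_0;\beta)\bigr) \subset X_0$ of images of Maslov two discs is closed in $X_0$ by Gromov compactness applied in $\widetilde{X}_0$ (there are only finitely many Maslov two classes of bounded symplectic area by the energy filtration of the potential function, and each moduli space is compact modulo bubbling, but bubbling is excluded in Maslov index two). Since $\mathcal{N}$ is disjoint from the compact set $S_0$ by the argument above, any small tubular neighborhood $W_0$ of $S_0$ that is disjoint from $\mathcal{N}$ does the job. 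The main (and essentially only) subtle point is the codimension statement for the small resolution of GC singularities, which is where we use that the degeneration in Theorem~\ref{NNUtoricdeg} is modeled on the Gonciulea--Lakshmibai--Kogan--Miller construction and admits a small resolution at every singular stratum.
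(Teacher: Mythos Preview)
The paper does not give its own proof of this lemma; it is simply quoted from \cite{NNU}. So there is nothing in the paper to compare your argument against. That said, your proposed argument has a genuine gap.

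Your chain of reasoning only ever establishes that the \emph{deformed} (torically transverse) disc avoids $S_0$. You start with an arbitrary Maslov two disc $\varphi$, deform it to a torically transverse disc, observe that disc avoids $S_0$, lift that deformed disc to $\widetilde{X}_0$, and apply Cho--Oh to conclude the lifted deformed disc avoids $E$---which, after pushing down, just reproves that the deformed disc avoids $S_0$. Nothing in this says the \emph{original} $\varphi$ avoids $S_0$. So when you assert in the last paragraph that $\mathcal{N}$ (the union of images of \emph{all} Maslov two discs) is disjoint from $S_0$ ``by the argument above'', that step is unjustified. (There is also a notational slip: as written, $\mathcal{N}=\bigcup_\beta \mathrm{Im}\,\ev_0$ is a subset of $L_0$, not the union of disc images in $X_0$; presumably you mean $\bigcup_{\varphi}\varphi(\mathbb{D})$.)

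The fix is to lift the original disc directly, without first deforming it. Since $\partial\mathbb{D}$ maps to $L_0\subset X_0\setminus S_0$, the preimage $\varphi^{-1}(S_0)$ is a proper analytic subset of $\mathbb{D}$, hence finite. On its complement $\varphi$ lifts through the biholomorphism $\pi\colon\widetilde{X}_0\setminus E\to X_0\setminus S_0$; since $\pi$ is proper, the lift stays in the compact set $\pi^{-1}(\varphi(\mathbb{D}))$, and the Riemann extension theorem yields a holomorphic extension $\widetilde{\varphi}\colon(\mathbb{D},\partial\mathbb{D})\to(\widetilde{X}_0,\widetilde{L}_0)$. Because $\pi$ is crepant the Maslov index is preserved, so $\widetilde{\varphi}$ has Maslov index two, and now Cho--Oh applies to $\widetilde{\varphi}$ itself: it is a basic disc, torically transverse in $\widetilde{X}_0$, hence disjoint from $E$. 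Therefore $\varphi=\pi\circ\widetilde{\varphi}$ is disjoint from $S_0$. With this in hand, your compactness argument (finitely many classes, no bubbling in index two, Gromov compactness) correctly produces the uniform neighborhood $W_0$.
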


Now let $\phi^\prime_\epsilon \colon X_\epsilon \to X_0$ be a (continuous) extension of the flow
$\phi_\epsilon \colon {X}^\textup{sm}_\epsilon \to {X}^\textup{sm}_0$ given in Theorem~\ref{NNUtoricdeg} (\cite[Section 8]{NNU}).
The following is the key proposition which relates the above mentioned holomorphic discs
in $(X_0,L_0)$ to those of $(X_\epsilon,L_\epsilon)$.

\begin{proposition}[Proposition 9.16 in \cite{NNU}]\label{barM1}
For any $\beta \in \pi_2(X_0,L_0)$ of Maslov index two, there is a positive real numbers $0 < \varepsilon \leq 1$
and a diffeomorphism
$$
\psi \colon {{\mathcal M}}_1(X_0,L_0;\beta) \to {{\mathcal M}}_1 (X_\varepsilon,L_\varepsilon;\beta)
$$
such that the diagram
$$
\xymatrix{
H_*({{\mathcal M}}_1(X_0,L_0;\beta)) \ar[d]_{\psi_*} \ar[r]^{\quad \quad (\ev_0)_*} & H_*(L_0) \ar[d]^{(\phi_\varepsilon)^{-1}_*}\\
H_*({{\mathcal M}}_1(X_\varepsilon,L_\varepsilon;\beta)) \ar[r]^{\quad \quad (\ev_0)_*} & H_*(L_\varepsilon)
}
$$
is commutative.
\end{proposition}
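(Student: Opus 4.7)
\medskip

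The plan is to exploit the fact established by Lemma~\ref{nodiscinW0} that every Maslov index two holomorphic disc in $(X_0,L_0)$ avoids a fixed neighborhood $W_0$ of the singular locus $S_0$. Consequently, the entire analysis takes place in the smooth locus $X_0^{\mathrm{sm}} := X_0 \setminus S_0$, on which the Kogan--Miller flow $\phi_\varepsilon \colon X_\varepsilon^{\mathrm{sm}} \to X_0^{\mathrm{sm}}$ is a diffeomorphism (indeed a symplectomorphism by Theorem~\ref{NNUtoricdeg}). Using $\phi_\varepsilon$, pull back the integrable complex structure $J_\varepsilon$ on $X_\varepsilon$ to an $\omega_0$-compatible almost complex structure $\widetilde J_\varepsilon := (\phi_\varepsilon)_{*} J_\varepsilon$ on $X_0^{\mathrm{sm}}$. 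By construction of the toric degeneration (Theorem~\ref{NNUtoricdeg}), $\widetilde J_\varepsilon$ converges to $J_0|_{X_0^{\mathrm{sm}}}$ in $C^\infty_{\mathrm{loc}}$ on $X_0^{\mathrm{sm}}$ as $\varepsilon \to 0$. The composition with $\phi_\varepsilon$ yields an obvious tautological bijection
$$
\Psi_\varepsilon \colon \mathcal{M}_1(X_\varepsilon,L_\varepsilon;\beta;J_\varepsilon) \xrightarrow{\ \cong\ } \mathcal{M}_1(X_0,L_0;\beta;\widetilde J_\varepsilon), \qquad [\varphi, z_0] \mapsto [\phi_\varepsilon \circ \varphi, z_0],
$$
and this bijection is compatible with $\ev_0$ (modulo the action of $\phi_\varepsilon^{-1}$ on the target).

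The remaining task is therefore to compare $\mathcal{M}_1(X_0,L_0;\beta;\widetilde J_\varepsilon)$ with $\mathcal{M}_1(X_0,L_0;\beta) = \mathcal{M}_1(X_0,L_0;\beta;J_0)$. First, I would localize: by Lemma~\ref{nodiscinW0} and the $C^\infty_{\mathrm{loc}}$ convergence $\widetilde J_\varepsilon \to J_0$ on $X_0^{\mathrm{sm}}$, a monotonicity/Gromov-compactness argument shows that for all sufficiently small $\varepsilon$, every $\widetilde J_\varepsilon$-holomorphic Maslov two disc is also disjoint from a slightly smaller neighborhood $W_0'\subset W_0$ of $S_0$. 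Thus both moduli spaces live in the compact region $X_0 \setminus W_0'$, where $\widetilde J_\varepsilon$ is $C^\infty$-close to $J_0$. Combined with the Fredholm regularity of every Maslov two class $\beta$ (property (2) in Section~\ref{reviewpotentialpotentialfunc}), the standard implicit function theorem for the Cauchy--Riemann operator with moving almost complex structure produces a unique $\widetilde J_\varepsilon$-holomorphic representative near each $J_0$-holomorphic representative, and yields a smooth map
$$
\Phi_\varepsilon \colon \mathcal{M}_1(X_0,L_0;\beta;J_0) \longrightarrow \mathcal{M}_1(X_0,L_0;\beta;\widetilde J_\varepsilon)
$$
which is a local diffeomorphism and is $\ev_0$-compatible up to $C^\infty$-small error that vanishes as $\varepsilon\to 0$.

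The principal step, which will be the main technical obstacle, is proving that $\Phi_\varepsilon$ is \emph{surjective} for $\varepsilon$ small. The argument is by contradiction using Gromov compactness: a sequence $u_{\varepsilon_n}$ of $\widetilde J_{\varepsilon_n}$-holomorphic Maslov two discs not arising from $\Phi_{\varepsilon_n}$ would subconverge (after reparametrization and passing to a subsequence) to a $J_0$-holomorphic stable map in $(X_0,L_0)$ of total Maslov index two. The energy identity and the non-existence of non-constant holomorphic discs of Maslov index $\leq 0$ (Lemma~\ref{discsinX0} and property (1) of Section~\ref{reviewpotentialpotentialfunc}) together with the absence of non-constant holomorphic spheres for energy reasons in the relevant homology class force the limit to be a single honest Maslov two disc in $\mathcal{M}_1(X_0,L_0;\beta;J_0)$; uniqueness of the local deformation would then place $u_{\varepsilon_n}$ in the image of $\Phi_{\varepsilon_n}$ for large $n$, a contradiction. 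Combining $\Psi_\varepsilon$ and $\Phi_\varepsilon$, the desired diffeomorphism is $\psi := \Psi_\varepsilon^{-1} \circ \Phi_\varepsilon$, and commutativity of the evaluation diagram follows since, on the nose, $\ev_0 \circ \Psi_\varepsilon^{-1} = \phi_\varepsilon^{-1} \circ \ev_0$ and $\Phi_\varepsilon$ is $\ev_0$-compatible at the level of homology via smooth cobordism provided by the interpolation $\{\widetilde J_{s\varepsilon}\}_{s \in [0,1]}$.
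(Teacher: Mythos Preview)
The paper does not give its own proof of this proposition: it is quoted verbatim as Proposition~9.16 of \cite{NNU} and used as a black box in Section~\ref{sec_bulkdefbyschucy}. So there is no in-paper argument to compare against, and your outline should be assessed on its own merits (and against what one would expect the \cite{NNU} proof to contain).

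Your strategy---pull back $J_\varepsilon$ through the symplectomorphism $\phi_\varepsilon$ on the smooth locus, use regularity plus the implicit function theorem to deform discs, and then close up with a Gromov-compactness/surjectivity argument---is the natural one and is almost certainly the skeleton of the \cite{NNU} proof. Two points in your write-up would need tightening before it could stand as a proof. First, the assertion that $\widetilde J_\varepsilon \to J_0$ in $C^\infty_{\mathrm{loc}}$ on $X_0^{\mathrm{sm}}$ is not contained in Theorem~\ref{NNUtoricdeg} as stated; that theorem gives the flow and the compatibility of the integrable systems, but the $C^\infty$-convergence of complex structures is an additional analytic input proved in \cite{NNU}. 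Second, in your surjectivity step you invoke ``the absence of non-constant holomorphic spheres for energy reasons in the relevant homology class.'' This is not correct: the GC toric variety $X_0$ (and $X_\varepsilon$) is Fano and certainly contains rational curves. What actually rules out bubbling is the positivity of Chern numbers of sphere components together with Lemma~\ref{discsinX0}, which forces any stable limit of Maslov-two discs to be a single irreducible disc. You also appeal to Fredholm regularity via ``property~(2) in Section~\ref{reviewpotentialpotentialfunc},'' but that property is stated for discs in $X_s$; regularity for discs in the \emph{singular} variety $X_0$ is obtained in \cite{NNU} by passing to a small resolution and applying the Cho--Oh classification there, a step your sketch elides.
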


\begin{lemma}[Lemma 9.22 in \cite{NNU}]\label{discsinXepsilon} Let $W_\varepsilon: = (\phi_\varepsilon')^{-1}(W_0)$.
There exists $\varepsilon_0 > 0$ such that
for all $0< \varepsilon \leq \varepsilon_0$, any holomorphic curve bounded by $L_\varepsilon$ in a class
of Maslov index two does not intersect $W_\varepsilon$.
\end{lemma}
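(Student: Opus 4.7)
The plan is to argue by contradiction, combining Gromov compactness with the results already established. Suppose the statement fails. Then there exist a sequence $\varepsilon_n \searrow 0$ and holomorphic discs
$$
\varphi_n \colon (\mathbb{D}^2, \partial\mathbb{D}^2) \to (X_{\varepsilon_n}, L_{\varepsilon_n})
$$
of Maslov index two whose images meet $W_{\varepsilon_n}=(\phi_{\varepsilon_n}')^{-1}(W_0)$. Transporting via the flow, set $\widetilde\varphi_n := \phi_{\varepsilon_n}'\circ\varphi_n$. On the smooth part $X_0^{\mathrm{sm}}$, $\widetilde\varphi_n$ is holomorphic with respect to the push-forward almost complex structure $(\phi_{\varepsilon_n}')_* J_{\varepsilon_n}$, its symplectic area with respect to $\omega_0$ equals that of $\varphi_n$ with respect to $\omega_{\varepsilon_n}$ (hence is uniformly bounded, since it is computed by the standard Maslov-index-two formula), and by construction each image $\widetilde\varphi_n(\mathbb{D}^2)$ meets $W_0$.

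Next I would invoke Gromov compactness. To handle the mild singularities of $X_0$, I would work on the small resolution $p\colon \widetilde X_0 \to X_0$ furnished by the GC toric degeneration: lift each $\widetilde\varphi_n$ to a holomorphic disc in $(\widetilde X_0, \widetilde L_0)$ away from the exceptional locus (this uses Lemma~\ref{nodiscinW0} applied to a slight enlargement, plus the fact that the unresolved loci have complex codimension $\geq 2$). Since $(\phi_{\varepsilon_n}')_* J_{\varepsilon_n}$ converges to the integrable complex structure on $X_0^{\mathrm{sm}}$ and areas are uniformly bounded, one extracts a subsequential Gromov limit $\widetilde\varphi_\infty \colon (\Sigma,\partial\Sigma)\to(X_0,L_0)$, a stable holomorphic disc of total Maslov index two.

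I would then rule out any bubbling. By Lemma~\ref{discsinX0}, no holomorphic disc in $(X_0,L_0)$ has Maslov index $\leq 0$, so no disc bubble can split off; and because $X_0$ is Fano, every nonconstant holomorphic sphere contributes at least $2$ to the total Maslov index, so no sphere bubble can occur either. Therefore $\widetilde\varphi_\infty$ is a single holomorphic disc of Maslov index two in $(X_0,L_0)$. To conclude, choose $W_0$ strictly containing the closure of a smaller neighborhood $W_0'$ of $S_0$ for which Lemma~\ref{nodiscinW0} still applies, and redefine $W_\varepsilon=(\phi_\varepsilon')^{-1}(W_0')$; then each image $\widetilde\varphi_n(\mathbb{D}^2)$ meets $\overline{W_0'}\subset W_0$, the limit image meets $\overline{W_0'}\subset W_0$ as well, contradicting Lemma~\ref{nodiscinW0}.

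The main obstacle is the technical one of running Gromov compactness in the presence of the singular fiber $X_0$. This is circumvented exactly as in \cite[Section 9]{NNU} by working on the small resolution $\widetilde X_0$, where the relevant holomorphic discs are shown to lie, using Fano-ness and Maslov index two to exclude approach to the exceptional locus. Everything else is a direct application of the facts already in the excerpt: the finiteness of Maslov-index-two classes (via the isomorphism $\pi_2(X_\varepsilon,L_\varepsilon)\cong \pi_2(X_0,L_0)$ provided by the family), Lemma~\ref{discsinX0} for the absence of low-index bubbles, and Lemma~\ref{nodiscinW0} for the final contradiction.
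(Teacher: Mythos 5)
The paper does not prove this statement itself; it is quoted from \cite[Lemma 9.22]{NNU}. Your overall strategy --- contradiction plus Gromov compactness, transporting the discs to the central fiber via the gradient-Hamiltonian flow, ruling out bubbles with Lemma~\ref{discsinX0} and Fano-ness, and deriving the contradiction from Lemma~\ref{nodiscinW0} --- is the right one and matches the structure of the argument in \cite{NNU}.

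There is, however, a circularity in the step where you lift $\widetilde\varphi_n$ to the small resolution $\widetilde X_0$. You appeal to Lemma~\ref{nodiscinW0} (applied to a slight enlargement of $W_0$) to keep $\widetilde\varphi_n$ away from the exceptional locus, but your standing hypothesis is precisely that $\widetilde\varphi_n$ meets $W_0$, a neighborhood of $S_0$; so a priori it may come arbitrarily close to the unresolved locus, and moreover Lemma~\ref{nodiscinW0} concerns honest holomorphic discs in $(X_0,L_0)$, whereas $\widetilde\varphi_n$ is pseudo-holomorphic only for the varying structure $(\phi_{\varepsilon_n}')_*J_{\varepsilon_n}$ and only on the locus where $\phi_{\varepsilon_n}'$ is smooth. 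To run compactness cleanly one should instead work inside a smooth ambient space: view the $\varphi_n$ as genuine holomorphic discs in the total space $\mathcal X$ (or its small resolution, which is an isomorphism over $\varepsilon>0$) with boundary on the converging family $L_{\varepsilon_n}$, extract the Gromov limit there, and only then use Lemma~\ref{discsinX0} and toric transversality to conclude the limit disc in $\widetilde X_0$ misses the exceptional fibers, so that it descends to $(X_0,L_0)$ and Lemma~\ref{nodiscinW0} applies. A second, smaller gap: Fano-ness alone does not rule out a nonconstant sphere with $c_1=1$ attached to a \emph{constant} disc component, which also has total Maslov index two. This is excluded by noting the Gromov limit respects the class in $\pi_2(X_0,L_0)$, and the basic Maslov-two classes have nontrivial boundary in $\pi_1(L_0)$, whereas such a sphere-plus-constant-disc configuration has trivial boundary class.
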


We now combine the diffeomorphism $\psi \colon {{\mathcal M}}_1(X_0,L_0;\beta) \to {{\mathcal M}}_1 (X_\varepsilon,L_\varepsilon;\beta)$
and $\phi_\varepsilon' \colon X_0 \to X_\varepsilon$ to define
an isomorphism between the following two correspondences
\begin{equation}\label{smoothcorrespondence0}
	\xymatrix{
                              & {\mcal{M}_{k+1}(X_0, L_0; \beta)} \ar[dl]_{\bev_+} \ar[dr]^{\ev_0} &
      \\
 L_0^k & & L_0} \mbox{ and }
 	\xymatrix{
                              & {\mcal{M}_{k+1}(X_\varepsilon, L_\varepsilon; \beta)} \ar[dl]_{\bev_+} \ar[dr]^{\ev_0} &
      \\
 L_\varepsilon^k & & L_\varepsilon}.
\end{equation}

Although they did not explicitly mention a choice of compatible systems of Kuranishi structures or
perturbations, Nishinou--Nohara--Ueda \cite{NNU} essentially constructed an $A_\infty$-structure
on $L_\varepsilon \subset X_\epsilon$ and computed its potential function
in the same way as on a Fano toric manifolds \cite{CO,FOOOToric1} using Proposition \ref{barM1} and Lemma~\ref{discsinXepsilon}.
Thus, they were able to take advantage of properties of $T^n$-equivariant perturbation in a toric manifold, an open submanifold of a toric variety $X_0$.
We denote the corresponding compatible system of multi-sections by $\frak s = \frak s_{k+1, \beta}$,
see \cite{FOOOToric1, FOOOToric2} for the meaning of this notation.

Next we need to involve bulk deformations for our purpose of a construction of continuum of
non-displaceable Lagrangian tori in $X$, whose construction is now in order.
Denote $\scr{A}_{GS}^2(\Z)$ by the free abelian group generated
by the horizontal $\&$ vertical Schubert cycles of codimension two
\begin{equation}\label{eq:basis}
\{\scr{D}_{i,i+1}^\textup{hor} \mid 1 \leq i \leq n-1 \} \cup \{\scr{D}_{j+1,j}^\textup{ver} \mid 1 \leq j \leq n-1 \}.
\end{equation}
Since $L \cap \scr{D}_{i,i+1}^\textup{hor} = \emptyset = L \cap \scr{D}_{j+1,j}^\textup{ver}$ for any $i, j$, the cap product of $\beta \in \pi_2(X,L)$ with any element thereof is well-defined.
Putting $\scr{A}_{GS}^2(\Lambda_0) := \scr{A}_{GS}^2(\Z) \otimes \Lambda_0,$
any element $\frak b \in \scr{A}_{GS}^2(\Lambda_0)$ can be expressed as
\begin{equation}\label{eq:frakb}
\frak b = \sum_{i=1}^{n-1} \frak b_{i,i+1}^\textup{hor}\scr{D}_{i,i+1}^\textup{hor}
+ \sum_{j=1}^{n-1} \frak b_{j+1,j}^\textup{ver}\scr{D}_{j+1,j}^\textup{ver}
\end{equation}
where $\frak b_{i,i+1}^\textup{hor}, \frak{b}_{j+1,j}^\textup{ver} \in \Lambda_0$.
We formally denote
\begin{equation}\label{formallydenote}
\beta \cap \frak b = \sum_{i=1}^{n-1} \frak b_{i,i+1}^\textup{hor} \left( \beta \cap \scr{D}_{i,i+1}^\textup{hor} \right)
+ \sum_{j=1}^{n-1} \frak b_{j+1,j}^\textup{ver} \left( \beta \cap \scr{D}_{j+1,j}^\textup{ver} \right).
\end{equation}
For simplicity, let us fix an enumeration $\{\scr{D}_j \mid j = 1,\cdots, B\}$ of the elements in \eqref{eq:basis} where $B = 2(n-2)$ and set $\frak{b}_j$ to be the coefficient corresponding to $\scr{D}_j$ in~\eqref{eq:frakb}.

The following is the statement of the counterpart of Theorem \ref{Foootoric2potenti}.

\begin{theorem}\label{mainthmappdendix} Let $\frak b \in \scr{A}_{GS}^2(\Lambda_0)$ and let $L_\varepsilon$ be a torus Lagrangian fiber in $X_\varepsilon$. Then the bulk-deformed potential function is written as
\begin{equation}\label{bulkdeformedpotential}
W^\frak{b} \left( L_\varepsilon ; b \right) = \sum_{\beta} n_\beta \cdot \exp \left( \beta \cap \frak{b} \right) \cdot \exp(\pa \beta \cap b) \, T^{\omega(\beta) / 2 \pi}.
\end{equation}
where the summation is taken over all homotopy classes in $\pi_2 (X_\varepsilon, L_\varepsilon)$ of Maslov index two.
\end{theorem}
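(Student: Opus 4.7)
\medskip

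\noindent\textbf{Proof proposal.}
The strategy is to transplant the argument of \cite[Proposition~4.7]{FOOOToric2} to the current setting, using the toric degeneration $\phi_\varepsilon'\colon X_\varepsilon\to X_0$ of Theorem~\ref{NNUtoricdeg} to push the computation to the toric side where $T^n$-equivariant Kuranishi perturbations are available, while carefully handling the fact that Schubert cycles are neither smooth nor $T^n$-invariant. First I would recall that the bulk-deformed potential is defined via the expansion
\[
\frak m_k^\frak b(b_1,\dots,b_k)=\sum_{\beta,\ell\ge 0}\frac{1}{\ell!}\,T^{\omega(\beta)/2\pi}(\ev_0)_{!}\bigl((\bev_+)^{*}(b_1\otimes\cdots\otimes b_k)\wedge(\bev^{\mathrm{int}})^{*}(\frak b^{\otimes \ell})\bigr),
\]
the sum being over $\beta\in\pi_2(X_\varepsilon,L_\varepsilon)$, with $\frak b$ plugged into the $\ell$ interior marked points, and $W^\frak b(L_\varepsilon;b)\cdot\mathrm{PD}[L_\varepsilon]$ is obtained by summing over $k$ with inputs equal to $b$.

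Next I would reduce to homotopy classes of Maslov index two. Since the family $\{X_\varepsilon\}$ is Fano, and the toric variety $X_0$ admits a small resolution at its singular loci, Lemma~\ref{discsinX0} gives $\mcal M_1(X_0,L_0;\beta)=\emptyset$ whenever $\mu(\beta)\le 0$, and the same holds for $X_\varepsilon$ by Proposition~\ref{barM1}. Combined with the energy filtration and the fact that $\frak b$ has degree two, only configurations with every disc bubble of Maslov index exactly two contribute to the coefficient of $\mathrm{PD}[L_\varepsilon]$; the sphere-bubble contributions vanish for the same dimension reason. So the formula reduces, for each $\beta$ of Maslov index two, to summing over the number $\ell$ of interior insertions of $\frak b$ into a single disc.

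The central computation is now the intersection pairing. Using Lemma~\ref{discsinXepsilon}, any Maslov-two disc $\varphi$ in $(X_\varepsilon,L_\varepsilon)$ avoids the neighborhood $W_\varepsilon$ of the singular stratum, so $\phi_\varepsilon'\circ\varphi$ lands in the smooth locus $X_0^{\mathrm{sm}}$ and lifts canonically to a toric holomorphic disc on a small resolution $\widetilde X_0\to X_0$. The lifted cycle $\widetilde{\scr D}_j$ is a torus-invariant prime divisor in $\widetilde X_0$, and exactly as in the proof of Lemma~\ref{intersectionnumbercal} one has $\beta\cap\scr D_j = [\widetilde\varphi_0]\cap\widetilde{\scr D}_j$ which equals $1$ or $0$ according to whether $\scr D_j$ corresponds to the facet of the GC polytope hit by $\beta$. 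This allows the use of the Nishinou--Nohara--Ueda $A_\infty$-model with $T^n$-equivariant perturbations on $X_\varepsilon$ inherited from the toric side: $(\ev_0)$ is submersive, the pullback by $\bev^{\mathrm{int}}$ of $\frak b$ is well defined in the smooth correspondence, and for a class $\beta$ of Maslov index two the $\ell$ interior marked points may be evaluated independently at $\frak b$, contributing the factor $(\beta\cap\frak b)^{\ell}/\ell!$. Finally, fixing $\beta$ and summing over $\ell$ turns the factor into $\exp(\beta\cap\frak b)$, while the boundary marked-point contributions give $\exp(\partial\beta\cap b)$ and the open Gromov--Witten invariant $n_\beta$ provides the count of $\ev_0$-preimages; combining these yields \eqref{bulkdeformedpotential}.

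The principal obstacle is the transversality/smoothness issue in the smooth correspondence because Schubert cycles fail to be smooth and $T^n$-invariant, so a priori $\bev^{\mathrm{int}}$ need not be transverse to $\frak b$ on the chosen Kuranishi structure. The resolution is twofold: (i) Lemma~\ref{discsinXepsilon} confines all relevant discs to the smooth locus, where $\phi_\varepsilon'$ is a diffeomorphism onto $X_0^{\mathrm{sm}}$ and pulls each $\scr D_j$ back to a smooth toric divisor, so the intersection calculus is carried out on $\widetilde X_0$ where smoothness and $T^n$-invariance both hold; and (ii) by Proposition~\ref{barM1}, the compatible Kuranishi system and multisections $\frak s_{k+1,\beta}$ constructed on $\mcal M_{k+1}(X_0,L_0;\beta)$ transport to $\mcal M_{k+1}(X_\varepsilon,L_\varepsilon;\beta)$, and extend to the moduli $\mcal M_{k+1;\ell}$ by forgetting interior marked points, so the toric-case verification of \cite[Section~7]{FOOOToric2} applies verbatim after replacing each toric divisor used there by the corresponding horizontal or vertical Schubert cycle. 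This closes the argument.
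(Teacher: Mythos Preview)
Your proposal is correct and follows essentially the same approach as the paper: both transplant the argument of \cite[Section~7]{FOOOToric2} to the GC setting by using the toric degeneration to transport the disc analysis and $T^n$-equivariant perturbations from $X_0$, invoke the Fano property together with Lemma~\ref{discsinX0}, Proposition~\ref{barM1}, and Lemma~\ref{discsinXepsilon} to restrict to Maslov-two classes and confine discs to the smooth locus, and then apply the divisor axiom to factor out $\exp(\beta\cap\frak b)$ and $\exp(\partial\beta\cap b)$. The paper organizes the interior insertions via the indexing maps $\mathbf{p}\in\mathrm{Map}(\ell,\underline{B})$ and cites the divisor axiom explicitly via \cite[Lemma~9.2]{FOOOToric2}, but your more direct summation over $\ell$ amounts to the same computation.
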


The remaing part of this section is reserved for the proof of Theorem~\ref{mainthmappdendix}.

For a Lagrangian submanifold $L$ of $X$, denote by
$$
ev_i^{\text{\rm int}} \colon \mathcal M^{\text{\rm main}}_{k+1;\ell}(L,\beta) \to X
$$
the evaluation map at the $i$-th interior marked point for $i=1,\ldots,\ell$.
We put $\underline B = \{1,\ldots,B\}$ and denote the set of all maps $\text{\bf p} \colon
\{1,\ldots,\ell\} \to \underline B$ by
$Map(\ell,\underline B)$. We write $\vert\text{\bf
p}\vert = \ell$ if $\text{\bf p} \in Map(\ell,\underline B)$.
We define a fiber product
\begin{equation}
\mathcal M^{\text{\rm main}}_{k+1;\ell}(L,\beta;\text{\bf p})
= \mathcal M^{\text{\rm main}}_{k+1;\ell}(L,\beta)
{}_{(ev_1^{\text{int}},\ldots,ev_{\ell}^{\text{int}})}
\times_{X^{\ell}} \prod_{i=1}^{\ell} \scr{D}_{\text{\bf p}(i)}
\end{equation}
and consider the evaluation maps
$$
ev_i \colon \mathcal M^{\text{\rm main}}_{k+1;\ell}(L,\beta;\text{\bf p}) \to L \mbox {  by  }  ev_i((\Sigma,\varphi,\{z^+_i\}, \{z_i\})) = \varphi(z_i).
$$

Note the image of a Schubert horizontal or vertical cycle is (a union of) components of the toric divisor via the map $\phi^\prime_\varepsilon$ so that $\scr{D}_j$ can be regarded as the (union of) components of toric divisors corresponding to Schubert horizontal or vertical cycles. Also, by Proposition \ref{barM1} and Lemma~\ref{discsinXepsilon}, the holomorphic discs of Maslov index two intersect at the smooth locus of cycles. For a toric fiber $L_0$ in $X_0$, there is a system $\frak s = \{\frak s_{k+1,\beta;\textbf{\textup{p}}}\}$ of $T^n$-equivariant multi-sections on the moduli spaces $\mcal{M}_{k+1,l}(X_0, L_0; \beta; \textbf{\textup{p}})$ for all classes $\beta$ with $\mu(\beta) = 2$ in \cite[Lemma 6.5]{FOOOToric2}.

As in the correspondence in~\eqref{smoothcorrespondence0}, applying a smooth correspondence into
\begin{equation}\label{smoothcorrespondence}
	\xymatrix{
                              & {\mcal{M}_{k+1,l}(X_\varepsilon, L_\varepsilon; \beta; \textbf{\textup{p}})} \ar[dl]_{\bev_+} \ar[dr]^{\ev_0} &
      \\
 L^k & & L}
\end{equation}
we define
$$
\frak{q}_{k, \ell; \beta} \left( \textbf{p}; b^{\otimes k} \right) := \left(\ev_0\right)! \left(\bev_+^* (\pi_1^*b \otimes \cdots \otimes \pi_k^*b) \right).
$$
Since $X_\varepsilon$ is Fano, Lemma~\ref{discsinX0} and $L \cap \scr{D}_{i,i+1}^\textup{hor} = \emptyset = L \cap \scr{D}_{j+1,j}^\textup{ver}$ yield that $\mathcal M_{k+1;\ell}(X_\varepsilon, L_\varepsilon,\beta;\text{\bf p})$ empty if one of the following is satisfied.
\begin{equation}\label{threeconditionsforempty}
(1)\,\, \mu(\beta) < 0, \quad (2)\,\, \mu(\beta) = 0 \mbox{ and } \beta \neq 0, \quad (3)\,\, \beta = 0 \mbox{ and } l > 0.
\end{equation}
Because of the compatibility of the forgetful map forgetting the boundary marked points, see \cite[Section 5]{Fuk},
\begin{equation}\label{divisoraxiboundarymarked}
\frak{q}_{k, \ell; \beta} \left( \textbf{p}; b^{\otimes k} \right) = \frac{1}{k!} (\pa \beta \cap b)^k \cdot \frak{q}_{0, \ell; \beta} \left( \textbf{p}; 1 \right).
\end{equation}
Since any moduli spaces satisfying one of the conditions in~\eqref{threeconditionsforempty} are empty, $\frak{q}_{0, \ell; \beta}(\textbf{p}; 1)$ represents a cycle, yielding $L_\varepsilon$ is weakly unobstructed with respect to $\frak{b}$ in~\eqref{eq:frakb}. Passing to the canonical model \cite{FOOO, FOOOc}, we obtain
\begin{equation}\label{opengromovwittendef}
\frak{q}_{0, \ell; \beta}(\textbf{p}; 1) = n_{\beta} (\textbf{p}) \cdot \textup{PD}[L]
\end{equation}
for some $n_{\beta} (\textbf{p}) \in \Q$.  As a consequence, we obtain every $1$-cochain is a weak bounding cochain with respect to $\frak{b}$. In particular, the potential function with bulk is defined on $H^1(L_\varepsilon; \Lambda_+)$.

Under our situation, $n_\beta (\textbf{p})$ is well-defined. Especially when $\dim \scr{D}_{\bullet} = 2n -2$, $\mu(\beta) = 2$, we recall that this is
precisely the situation where the divisor axiom of the Gromov-Witten theory applies, see \cite[p.193]{CK} and \cite[Lemma 9.2]{FOOOToric2}. In particualr, we can calculate $n_\beta (\textbf{p})$ in the homology level and therefore
\begin{equation}\label{divisoraxiomopengromov}
n_\beta (\textbf{p}) = n_\beta \cdot \prod^{| \textbf{p} |}_{i=1} \left(\beta \cap \scr{D}_{\textbf{p}(i)} \right).
\end{equation}
Following \cite[Section 7]{FOOOToric2} and using~\eqref{divisoraxiboundarymarked},~\eqref{opengromovwittendef},~\eqref{divisoraxiomopengromov} and~\eqref{formallydenote}, we obtain that
\begin{align*}
\sum_{k=0}^\infty \frak{m}^\frak{b}_k \left( b^{\otimes k} \right) &:= \sum_{k=0}^\infty \sum_{\ell=0}^\infty \sum_{\beta; \mu(\beta) = 2} \frac{1}{\ell!} \,\, \frak{q}_{k, \ell; \beta} \left(\frak{b}^{\otimes \ell}; b^{\otimes k} \right) T^{\omega(\beta) / 2 \pi} \\
&= \sum_{\ell=0}^\infty \sum_{\textbf{p}; |\textup{p} | = \ell} \sum_{\beta; \mu(\beta) = 2}  \exp \left( \pa \beta \cap b \right) \cdot \frac{1}{\ell!} \,\, \frak{b}^\textbf{p} \, \frak{q}_{0, \ell; \beta} \left(\textbf{p}; 1 \right) T^{\omega(\beta) / 2 \pi} \\
&= \sum_{\beta; \mu(\beta) = 2}  \left( \sum_{\ell=0}^\infty \sum_{\textbf{p}; |\textup{p} | = \ell}  \exp \left( \pa \beta \cap b \right) \cdot \frac{1}{\ell!} \,\, \frak{b}^\textbf{p} \, n_\beta(\textbf{p}) \right) T^{\omega(\beta) / 2 \pi} \cdot PD[L] \\
&= \sum_{\beta; \mu(\beta) = 2} n_\beta \cdot \left( \sum_{\ell=0}^\infty \frac{1}{\ell!}  \sum_{\textbf{p}; |\textup{p} | = \ell}  \,\, \prod^{| \textbf{p} |}_{i=1} \frak{b}_{\textbf{p}(i)} \left(  \beta \cap \scr{D}_{\textbf{p}(i)} \right)  \right) \cdot \exp(\pa \beta \cap b) \,\cdot T^{\omega(\beta) / 2 \pi}\cdot PD[L]\\
&= \sum_{\beta; \mu(\beta) = 2} n_\beta \cdot \exp \left( \beta \cap \frak{b} \right) \cdot \exp(\pa \beta \cap b) \,\cdot T^{\omega(\beta) / 2 \pi}\cdot PD[L]
\end{align*}
where $\frak{b}^\textbf{p} = \prod_{i=1}^\ell \frak{b}_{\textbf{p}(i)}$. Finally, incorporating with deformation of non-unitary flat line bundle by Cho \cite{Cho}, we extend the domain of the bulk-deformed potential to $H^1(L_\varepsilon; \Lambda_0)$. This completes the proof of Theorem~\ref{mainthmappdendix}.

\end{document}